
\documentclass[11pt]{amsart}
\usepackage{amsfonts, amstext, amsmath, amsthm, amscd, amssymb, upgreek}
\usepackage{graphicx, color,  subfigure, wrapfig, overpic, url}
\usepackage[dvipsnames]{xcolor}
\usepackage{tikz, url}
\textwidth 6.07in 
\textheight 8.6in 
\oddsidemargin 0.18in
\evensidemargin 0.18in
 


\AtBeginDocument{
   \def\MR#1{}
}

\newcommand{\C}{\mathbb{C}}
\newcommand{\R}{\mathbb{R}}
\newcommand{\Q}{\mathbb{Q}}
\newcommand{\Z}{\mathbb{Z}}

\renewcommand{\P}{\mathcal{P}}

\newcommand{\W}{\mathcal{W}}
\newcommand{\vol}{{\rm vol}}

\newcommand{\voct}{{v_{\rm oct}}}
\newcommand{\vtet}{{v_{\rm tet}}}
\newcommand{\LL}{\mathcal{L}}
\newcommand{\CC}{\mathcal{C}}
\newcommand{\RR}{\mathcal{R}}
\newcommand{\KK}{\mathcal{K}}
\newcommand{\K}{\upkappa}
\newcommand{\toF}{\stackrel{\rm{F}}{\to}}

\newcommand{\volbp}{{\rm vol}^{\lozenge}}
\newcommand{\G}{\tilde G}

 \newcommand{\TT}{\mathbb{T}}
\newcommand{\m}{\mathrm{m}}

\newcommand{\LLi}{\mathrm{Li}} 
\newcommand{\re}{\mathop{\mathrm{Re}}} 
\newcommand{\im}{\mathop{\mathrm{Im}}} 

\newtheorem{theorem}{Theorem}
\newtheorem{corollary}[theorem]{Corollary}
\newtheorem{lemma}[theorem]{Lemma}

\newtheorem{conjecture}[theorem]{Conjecture}

\newtheorem*{namedtheorem}{\theoremname}
\newcommand{\theoremname}{testing}

\theoremstyle{definition}
\newtheorem{definition}[theorem]{Definition}

\newtheorem{remark}[theorem]{Remark}


\title[Mahler Measure and the Vol-Det Conjecture]
{Mahler Measure and the Vol-Det Conjecture}
\author[A.\ Champanerkar]{Abhijit Champanerkar}
\address{Department of Mathematics, College of Staten Island \& The Graduate Center, City University of New York, New York, NY}
\email{abhijit@math.csi.cuny.edu}
\author[I. \ Kofman]{Ilya Kofman}
\address{Department of Mathematics, College of Staten Island \& The Graduate Center, City University of New York, New York, NY}
\email{ikofman@math.csi.cuny.edu}
\author[M. \ Lal\'in]{Matilde Lal\'in}
\address{Universit\'e de Montr\'eal, Pavillon Andr\'e-Aisenstadt, D\'ept. de math\'ematiques et de statistique, CP 6128, succ. Centre-ville Montr\'eal, Qu\'ebec, H3C 3J7, Canada}
\email{mlalin@dms.umontreal.ca}


\begin{document}

\maketitle

\begin{abstract}
The Vol-Det Conjecture relates the volume and the determinant of a
hyperbolic alternating link in $S^3$.  We use exact computations of
Mahler measures of two-variable polynomials to prove the Vol-Det
Conjecture for many infinite families of alternating links.

We conjecture a new lower bound for the Mahler measure of certain
two-variable polynomials in terms of volumes of hyperbolic regular
ideal bipyramids.  Associating each polynomial to a toroidal link
using the toroidal dimer model, we show that every polynomial
which satisfies this conjecture with a strict inequality gives rise to
many infinite families of alternating links satisfying the Vol-Det
Conjecture.  We prove this new conjecture for six toroidal links by
rigorously computing the Mahler measures of their two-variable polynomials.
\end{abstract}

\section{Introduction}
The deep connections between the Mahler measure of two-variable
polynomials and hyperbolic volume have been investigated by several
authors (see, e.g., \cite{BoydRV2002,brv03,boyd02,kenyonlap,Lalin,GM}).
The following examples illustrate some of the remarkable
relationships that have been discovered: Let $K$ be the figure-eight
knot, with $A$-polynomial $A(L,M)$ \cite{CCGLS}, and let $p(z,w)$ be
the characteristic polynomial of the toroidal dimer model on the
hexagonal lattice \cite{kenyon-survey-1}. Let $\m(P)$ denote the
logarithmic Mahler measure of a two-variable polynomial $P$,
and let $\vol(K)$ denote the hyperbolic volume of $S^3-K$.  Then
\begin{align}
& \vol(K) = 2\pi\, \m(1+x+y) = \frac{3\sqrt{3}}{2}L(\chi_{-3},2), \label{eq:3mm1} \\
& \vol(K) = \pi\, \m(A(L,M)) = \pi\, \m(M^4 + L(1-M^2-2M^4-M^6+M^8)-L^2M^4),  \label{eq:3mm2} \\
& \vol(K) = \frac{2\pi}{5}\, \m(p(z,w)) = \frac{2\pi}{5}\, \m\left(6-w-\frac{1}{w}-z-\frac{1}{z}-\frac{z}{w}-\frac{w}{z}\right).  \label{eq:3mm3} 
\end{align}

Equation~(\ref{eq:3mm1}), a famous result of Smyth \cite{Smyth}, was the
first instance where Mahler measure, hyperbolic volume and
special values of $L$-functions were related.  Equation~(\ref{eq:3mm2}), discovered by
Boyd \cite{boyd02} and later generalized by Boyd and Rodriguez-Villegas
\cite{BoydRV2002,brv03}, is an example of how Mahler measures of $A$-polynomials, which are
invariants of cusped hyperbolic 3-manifolds, are related to sums of
hyperbolic volumes of 3-manifolds using regulators on algebraic
curves.  Equation~(\ref{eq:3mm3}), discovered by Kenyon, arose
from his study of the entropy of toroidal dimer models \cite{kenyon-survey-1}.

The Vol-Det Conjecture relates the volume and determinant of a
hyperbolic alternating link in $S^3$.  In this paper, we use exact
computations of Mahler measures of two-variable polynomials to prove
the Vol-Det Conjecture for many infinite families of alternating
links.  Specifically, we formulate a conjectured inequality for
toroidal links (Conjecture~\ref{conj:volbipy} below) that relates
hyperbolic geometry, Mahler measure and toroidal dimer models.  We
then prove that every toroidal link which satisfies
Conjecture~\ref{conj:volbipy} with a strict inequality gives rise to
many infinite families of alternating links satisfying the Vol-Det
Conjecture.  We prove Conjecture~\ref{conj:volbipy} for six toroidal
links by explicitly computing the Mahler measures of two variable
polynomials using a technique developed by Boyd and
Rodriguez-Villegas. In particular, we give the complete proof of equation~\eqref{eq:3mm3}
above.
The motivation for Conjecture~\ref{conj:volbipy} came from studying the hyperbolic
geometry of biperiodic alternating links in \cite{ckp:bal_vol}.

\subsection{Main Conjecture}

Let $I=(-1,1)$. Let $L$ be a link in the thickened torus $T^2\times I$
with an alternating diagram on $T^2\times\{0\}$, projected onto the
$4$--valent graph $G(L)$.  The diagram is \emph{cellular} if the
complementary regions are disks, which are called the \emph{faces} of
$L$ or of $G(L)$.  
When lifted to the universal cover of $T^2\times I$, the link $L$ 
becomes a \emph{biperiodic alternating link} $\LL$ in
$\R^2\times I$,
such that $L=\LL/\Lambda$ for a two-dimensional lattice
$\Lambda$ acting by translations of $\R^2$.
We will refer to $\LL$ as a link, even though it has infinitely many components homeomorphic to $\R$ or $S^1$.  
The faces of $\LL$ are the
complementary regions of its diagram in $\R^2$, which are the regions
$\R^2-G(\LL)$.  The diagram of $L$ on $T^2\times\{0\}$ is
\emph{reduced} if four distinct faces meet at every crossing of
$G(\LL)$ in $\R^2$.  Let $c(L)$ denote the crossing number of the
reduced alternating projection of $L$ on $T^2\times \{0\}$, which is
minimal by \cite{Adams:crossings}.  Throughout the paper, link
diagrams on $T^2\times \{0\}$ will be alternating, reduced and
cellular.

Let $B_n$ denote the hyperbolic regular ideal bipyramid whose link
polygons at the two coning vertices are regular $n$--gons.
The hyperbolic volume of $B_n$ is given by
$$ \vol(B_n) = n \left(\int_0^{2\pi/n} -\log|2\sin(\theta)|d\theta + \int_0^{\pi(n-2)/2n} -2\log|2\sin(\theta)|d\theta \right). $$
See~\cite{Adams:bipyramids} for more details and a table of values of $\vol(B_n)$.  
If we let $n=2$, note that $\vol(B_2)=0$.

For a face $f$ of a planar or toroidal graph, let $|f|$ denote the
degree of the face; i.e., the number of its edges.
Let $L$ be an alternating link diagram on the torus as above.
Define the \emph{bipyramid volume} of $L$ as follows:
$$ \volbp(L) = \sum_{f \in \{\text{faces of} \ L\}}\vol(B_{|f|}). $$

For a biperiodic alternating link $\LL$ in $\R^2\times I$, the
projection graph $G(\LL)$ in $\R^2$ is biperiodic and can be
checkerboard colored.
The {\em Tait graph} $G_{\LL}$ is the planar checkerboard graph for
which a vertex is assigned to every shaded region and an edge to every
crossing of $\LL$.
Using the other checkerboard coloring yields the dual graph $G_{\LL}^*$. 
We form the bipartite {\em overlaid  graph} $G_{\LL}^b = G_{\LL}\cup G_{\LL}^*$ determined by the link diagram of $\LL$ in $\R^2$
as follows: The black vertices of $G_{\LL}^b$ are the vertices of
$G_{\LL}$ and of $G_{\LL}^*$; the white vertices of $G_{\LL}^b$ are the crossings of $\LL$.
The edges of $G_{\LL}^b$ join a black vertex for each face
of $\LL$ to every white vertex incident to the face.
The overlaid graph $G_{\LL}^b$ is a biperiodic {\em balanced bipartite} graph; i.e., the number of black
vertices equals the number of white vertices in a fundamental domain.
The $\Lambda$--quotient of $G_{\LL}^b$ is the toroidal graph $G^b_L$,
which is also a balanced bipartite graph. See Figures \ref{fig-square}
and \ref{fig:triaxial}.

This makes it possible to define the toroidal dimer model on
$G^b_{L}$.  A dimer covering of a graph is a subset of edges that
covers all the vertices exactly once, so each vertex is the endpoint
of a unique edge.  The toroidal dimer model on $G^b_{L}$ is a statistical
mechanics model of the set of dimer coverings of $G^b_{L}$.  The {\em
  characteristic polynomial} of the dimer model is defined as $p(z,w)
= \det \K(z,w),$ where $\K(z,w)$ is the weighted, signed adjacency matrix with
rows indexed by black vertices and columns by white vertices, and
matrix entries determined by a certain choice of signs on edges, and a
choice of homology basis for the $\Lambda$--action.
See Section~\ref{sec:background} and
\cite{cimasoni-survey,kenyon-survey-1,ck:det_mp} for details and
examples.

Let $G^b_n$ be the finite balanced bipartite toroidal graph $G^b_{\LL}/(n\Lambda)$.
Let $Z(G^b_n)$ be the number of dimer coverings of $G^b_n$.
Kenyon, Okounkov and Sheffield \cite{KOS} gave an explicit expression
for the asymptotic growth rate of the toroidal dimer model on $\{G^b_n\}$:
$$ \log Z(G^b_{\LL}) := \lim_{n\to\infty}\frac{1}{n^2}\log Z(G^b_n) = \m(p(z,w)).$$
The number $Z(G^b_{\LL})$ is called the {\em partition function}, and
the limit is the {\em entropy} of the toroidal dimer model.
It is proved in \cite{KOS} that the Mahler measure of the
characteristic polynomial is independent of the choices made to obtain
$\K(z,w)$, so the entropy is determined by $G^b_{\LL}$.

\begin{conjecture}[Main Conjecture]\label{conj:volbipy}
Let $\LL$ be any biperiodic alternating link, with toroidally alternating $\Lambda$--quotient link $L$. 
Let $p(z,w)$ be the characteristic polynomial of the toroidal dimer model on $G_{\LL}^b$.
Then
$$ \volbp(L) \leq 2\pi\, \m(p(z,w)). $$
\end{conjecture}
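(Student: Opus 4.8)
The plan is to reduce the Main Conjecture to a purely local, face-by-face comparison, and then to convert the right-hand side into a form where such a comparison makes sense. The key intermediary object is the \emph{spanning-tree entropy} (equivalently, the normalized tree number, i.e. Kenyon's asymptotic of the number of spanning trees of $G_L/(n\Lambda)$ or of its Tait graph), which for periodic planar graphs coincides with a Mahler measure and is known to equal the entropy of the associated dimer model on the overlaid graph up to the standard normalization. Concretely, the first step is to invoke the identification $2\pi\,\m(p(z,w)) = 2\pi\,\m\big(\det\K(z,w)\big)$ with the exponential growth rate $\mathbf{h}(G_L^b)$ of dimer coverings of $G_L^b$ from \cite{KOS}, and then to rewrite this growth rate in terms of the Tait graph $G_L$: since $G_L^b$ is the overlaid graph of a checkerboard pair, dimer coverings of $G_L^b$ are in bijective correspondence (by Temperley-type arguments; cf. \cite{kenyon-survey-1,ck:det_mp}) with spanning trees of $G_L$ together with their duals. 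So the target inequality becomes
$$ \sum_{f\in\{\text{faces of }L\}} \vol\!\big(B_{|f|}\big) \;\le\; 2\pi\cdot(\text{tree entropy of }G_L). $$

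The second step is the heart of the matter: a \emph{local} lower bound for tree entropy. The idea is that the tree entropy of a periodic planar graph is bounded below by a sum of contributions indexed by faces (or equivalently by vertices of the dual), and that each face of degree $d$ contributes at least $\vol(B_d)/2\pi$. One natural route is to use the determinant/Mahler-measure formula for the tree entropy, write $\log\det$ of the periodic Laplacian as an integral over the torus $(\mathbb{T})^2$ of $\log$ of a trigonometric polynomial, and compare this integrand termwise against the integrand in the bipyramid-volume formula
$$ \vol(B_n) = n\left(\int_0^{2\pi/n} -\log|2\sin\theta|\dd\theta + \int_0^{\pi(n-2)/2n} -2\log|2\sin\theta|\dd\theta\right), $$
where the two pieces correspond, respectively, to the ideal pyramid over a regular $n$-gon and to its reflection. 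The plan is to realize each $\vol(B_{|f|})$ as (up to the factor $2\pi$) the volume of an ideal polyhedron combinatorially dual to the face $f$, and then to use an additivity/convexity estimate — essentially Jensen's inequality applied to $\log$ against the uniform measure on the torus — to show that the global integral dominates the sum of these local volumes. An alternative, more geometric route is to appeal directly to \cite{ckp:bal_vol}: there the bipyramid decomposition is shown to bound hyperbolic volumes of biperiodic alternating links from below, and one would import that face-by-face estimate here, matching each bipyramid $B_{|f|}$ to a piece of the decomposition underlying the dimer/Mahler-measure quantity.

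The third step is bookkeeping: one must check that the normalization constant $2\pi$ is exactly right (this is where $\vol(B_2)=0$ is a useful sanity check, since degree-$2$ faces — bigons — must contribute nothing, matching the fact that bigon reductions do not change the link), and that passing between $L$, its Tait graph $G_L$, and the overlaid graph $G_L^b$ does not introduce spurious factors. Here one uses that $G_L^b$ is balanced bipartite with a fundamental domain whose black vertices split as $V(G_L)\sqcup V(G_L^*)$ and whose white vertices are the $c(L)$ crossings, together with Euler's formula on the torus relating $\#\{\text{faces}\}$, $c(L)$, and the ranks of $G_L$ and $G_L^*$.

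\textbf{Main obstacle.} The crux, and the step I expect to be genuinely hard, is the local lower bound in Step 2: showing that the tree entropy (a global spectral quantity, an integral of $\log$ over the spectral torus) is bounded below by the sum of the purely combinatorial-geometric quantities $\vol(B_{|f|})/2\pi$. Term-by-term comparison of integrands is delicate because the characteristic polynomial of the dimer model mixes contributions from all faces nonlinearly inside a determinant, so a naive Jensen argument loses too much; one likely needs either a clever subadditivity property of Mahler measure under the operation of "gluing in a face," or a direct hyperbolic-geometric argument that builds an explicit ideal polyhedral decomposition realizing $\sum_f \vol(B_{|f|})$ inside (a cover of) the link complement and then compares its volume to $2\pi\,\m(p)$ via the volume–Mahler-measure dictionary. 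Controlling the error in that comparison — and in particular proving it has the correct sign for \emph{every} face-degree sequence, not just the uniform ones — is the part of the argument that I expect to require the new ideas, which is presumably why the statement is posed as a conjecture rather than a theorem.
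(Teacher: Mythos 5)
There is a genuine gap, and you have in fact named it yourself: your Step~2 (the ``local'' lower bound asserting that the tree/dimer entropy of $G_L^b$ dominates $\sum_f \vol(B_{|f|})/2\pi$) is not an intermediate lemma but is, after your Temperley-type reformulation, precisely the statement to be proved. Nothing in your proposal supplies it, and the two routes you sketch for it do not work as stated. A Jensen-type comparison goes the wrong way: concavity of $\log$ yields \emph{upper} bounds for $\m(p)$ in terms of averaged data, not lower bounds, and the known general lower-bound technology (e.g.\ Smyth's bound $\m(p_\ell)\le \m(p)$ from the edge polynomials of the Newton polygon) is far weaker than $\volbp(L)/2\pi$ --- the paper's table shows it falling short by a wide margin in every example --- so no termwise or convexity argument of the kind you describe is currently available. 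Your geometric fallback also rests on a misreading of \cite{ckp:bal_vol}: there the bipyramid quantity is an \emph{upper} bound, $\vol((T^2\times I)-L)\le \volbp(L)$ (with equality only for semi-regular links), so it cannot be ``imported'' to bound the Mahler-measure side from below; indeed the whole point of the conjecture is that $\volbp(L)$ sits \emph{between} the hyperbolic volume and $2\pi\,\m(p)$.

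For comparison, the paper does not prove this statement in general --- it is posed as the Main Conjecture precisely because no such face-by-face or spectral argument is known. What the paper does prove are six special cases, by computing $\m(p(z,w))$ exactly: one factors or rationally parametrizes the zero locus of $p$, writes $w\wedge z$ as a combination of terms $\alpha\wedge(1-\alpha)$ (using the lemma on wedges of linear forms, discarding torsion), and evaluates the resulting Bloch--Wigner dilogarithms at the endpoints of the integration path, simplifying with the five-term relation; the left-hand side $\volbp(L)$ is then computed from \cite{ckp:bal_vol} and compared numerically or exactly. So your proposal is a (currently unavailable) general strategy rather than an alternative proof; to turn it into one you would need exactly the new idea you flag --- a subadditivity or gluing principle for Mahler measure that sees individual faces --- and that is open.
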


The link $L$ is often hyperbolic in $T^2\times I$; i.e., $(T^2\times I)-L$ is a complete finite-volume
hyperbolic $3$--manifold \cite{adams:hyperbolicity,ckp:bal_vol,HowiePurcell}.
In~\cite{ckp:bal_vol}, it was proved that
\begin{equation}\label{eq:volbp}
  \vol((T^2\times I)-L)\leq \volbp(L),
\end{equation}  
with equality for semi-regular links.  Thus,
Conjecture~\ref{conj:volbipy} would imply that
\begin{equation}\label{eq:volbp-3}
\vol((T^2\times I)-L)\leq \volbp(L) \leq 2\pi\, \m(p(z,w)).
\end{equation}  

In this paper, we prove Conjecture~\ref{conj:volbipy} for six biperiodic alternating links using 
rigorous computations for the Mahler measures of the corresponding $p(z,w)$.
Our examples include cases for which the expression~\eqref{eq:volbp-3} is sharp, with both equalities, and cases for which both are strict inequalities.
We now explain several results at the intersection of geometry,
topology and number theory implied by Conjecture~\ref{conj:volbipy},
which therefore hold in these special cases.

\subsection{Volume and determinant.}
The determinant of a knot is one of the oldest knot invariants that
can be directly computed from a knot diagram.  For any knot or link $K$,
$$ \det(K) = |\det(M + M^T)|  = |\Delta_K(-1)|= |V_K(-1)|, $$ 
where $M$ is any Seifert matrix of $K$, $\Delta_K(t)$ is the Alexander
polynomial and $V_K(t)$ is the Jones polynomial of $K$ (see, e.g.,
\cite{lickorish}).

Experimental evidence has long suggested a close relationship between
the volume and determinant of alternating knots
\cite{Dunfield_website,stoimenow}.  The following inequality was
conjectured in \cite{ckp:gmax}, and verified for all alternating knots
up to 16 crossings, weaving knots \cite{ckp:weaving} with hundreds
of crossings, all 2--bridge links and alternating closed 3--braids \cite{Burton}.

\begin{conjecture}[Vol-Det Conjecture \cite{ckp:gmax}]\label{conj:voldet}
For any alternating hyperbolic link $K$, 
\[ \vol(K) < 2\pi\log\det(K). \]
\end{conjecture}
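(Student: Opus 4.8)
\subsection*{Proof proposal}

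The plan is to insert the bipyramid volume as an intermediary and reduce the conjecture to a two-step chain. For a reduced alternating diagram $D$ of $K$ on $S^2$, set $\volbp(D)=\sum_{f}\vol(B_{|f|})$, summed over the faces of $D$ exactly as in the toroidal case, and aim to prove
\[ \vol(K) \;\le\; \volbp(D) \;\le\; 2\pi\log\det(K), \]
with at least one inequality strict. The left inequality is pure hyperbolic geometry; the right inequality is where the determinant enters, and I would attack it by rewriting $\det(K)$ combinatorially. Recall that for a connected alternating link $\det(K)$ equals the number of spanning trees $\tau(G_K)$ of the Tait graph $G_K$, and that by the Temperley correspondence the spanning trees of $G_K$ are in bijection with the dimer coverings of the associated planar overlaid bipartite graph $G^b$ \cite{ck:det_mp}. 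This places both the geometry (through $\volbp$) and the arithmetic (through $\det$) on the single finite planar object $G^b$, the $S^3$ analogue of the toroidal setup behind Conjecture~\ref{conj:volbipy}.

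For the left inequality I would run the same bipyramid-decomposition argument that proves inequality~\eqref{eq:volbp} in \cite{ckp:bal_vol}, now applied to the diagram on $S^2$: place an ideal bipyramid $B_{|f|}$ over each face $f$ and bound $\vol(K)$ above by the regular-bipyramid value $\volbp(D)$ of the associated angle structure. Unlike the toroidal case, equality would force every bipyramid to be regular simultaneously, i.e.\ the link to be semi-regular; but semi-regular links are necessarily infinite and biperiodic, so the inequality is \emph{strict} for every finite hyperbolic link in $S^3$. Securing this strictness with an explicit geometric defect is the easy half and provides the slack I expect to need when closing the argument.

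The substance is the right inequality $\volbp(D)\le 2\pi\log\tau(G_K)$. Here I would seek a \emph{local} lower bound for the dimer count of $G^b$, written as a sum of per-face contributions, and compare it term by term with the contributions $\vol(B_{|f|})$. Concretely, lower bounds of Schrijver--Gurvits type for the number of perfect matchings of a planar bipartite graph should yield $\log\bigl(\text{dimers of }G^b\bigr)\ge \sum_{f}\lambda(|f|)$ for an explicit function $\lambda$ of the local degree; it would then suffice to verify the pointwise inequality $\vol(B_n)\le 2\pi\,\lambda(n)$ for every $n\ge 3$, together with control of the bounded number of low-degree faces forced by planarity. I note that this finite local estimate is strictly stronger than Conjecture~\ref{conj:volbipy}: applied to the toroidal quotients $G^b_n$ and normalized by $n^{-2}$, it degenerates via \cite{KOS} to the statement $\volbp(L)\le 2\pi\,\m(p(z,w))$.

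The hard part will be sharpness. For the square weave the displayed chain collapses to equalities in the limit, since the entropy of the square-weave dimer model equals $\voct/2\pi=2G/\pi$ (with $G$ Catalan's constant) while the faces are asymptotically regular squares with $\vol(B_4)=\voct$. Thus the right inequality has \emph{no asymptotic slack}, and any local bound $\lambda(n)$ used to prove it must be exactly tight at $n=4$ in the infinite-volume limit. Producing a finite, non-asymptotic lower bound on $\tau(G_K)$ that is simultaneously local and sharp at the square lattice---rather than merely the Mahler-measure limit of \cite{KOS}---is the crux, and it is precisely the gap between the asymptotic entropy identities and a genuine inequality valid for every finite diagram. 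The strict geometric defect from the left inequality is what one hopes will absorb the unavoidable loss in any near-tight local estimate, but quantifying this trade-off uniformly over all alternating diagrams is the essential obstacle, and is why the full conjecture remains open.
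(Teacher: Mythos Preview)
This statement is the Vol-Det \emph{Conjecture}; the paper does not prove it and presents it as open. There is therefore no proof in the paper to compare your attempt against. What the paper does establish is the much weaker Theorem~\ref{Thm:voldet_ineq}: if a biperiodic link $\LL$ satisfies $\volbp(L)<2\pi\,\m(p)$, then the Vol-Det inequality holds for almost all $K_n$ in every F{\o}lner sequence $K_n\toF\LL$. That argument is indirect and asymptotic---it passes to limits via Theorems~\ref{thm:volbp-density} and~\ref{Thm:det_mp} and requires the strict inequality of Conjecture~\ref{conj:volbipy} as an external input, verified case by case in Section~\ref{sec:examples}.

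Your proposal is a direct strategy for the full conjecture, and you correctly flag it as incomplete at the end. A few specific comments. The left inequality $\vol(K)\le\volbp(D)$ is indeed Adams' bipyramid bound (the paper invokes \cite[Theorem~4.1]{Adams:bipyramids}, summing over \emph{bounded} faces; your sum over all faces of $S^2$ is only larger, so this is harmless). However, your argument for strictness---that equality would force a semi-regular and hence biperiodic structure---is not established anywhere and would itself require proof; do not treat it as available slack. The right inequality $\volbp(D)\le 2\pi\log\det(K)$ is the substantive obstacle, and your diagnosis is accurate: the square-weave limit shows any local per-face matching bound $\lambda(n)$ must be exactly tight at $n=4$, which the known Schrijver--Gurvits estimates are not. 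Absorbing that deficit with the left-hand slack would require a \emph{quantitative} lower bound on $\volbp(D)-\vol(K)$ for finite diagrams, and no such bound exists. So your proposal identifies a reasonable architecture and locates the difficulty precisely, but it does not close the gap---which is consistent with the conjecture remaining open.
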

\noindent
It was shown in \cite{ckp:gmax} that the constant $2\pi$ is sharp;
i.e., for any $\alpha < 2 \pi$, there exist alternating links for which $\vol(K) > \alpha\log\det(K)$.

In \cite{ck:det_mp,ckp:density,ckp:gmax}, biperiodic alternating
links were considered as limits of sequences of finite hyperbolic
links.  In Section~\ref{sec:background}, we define a natural notion
of convergence for a sequence of alternating links to a biperiodic
alternating link $\LL$, called {\em F{\o}lner convergence almost
  everywhere}, denoted by $K_n\toF\LL$.
It was proved in \cite{ck:det_mp} that for any sequence of alternating links $K_n$
that converge to a biperiodic alternating link $\LL$ in this sense, the
determinant densities of $K_n$ converge to the density of the Mahler measure of 
the characteristic polynomial $p(z,w)$  of the associated toroidal dimer model:
\[ K_n\toF \LL \quad \Longrightarrow \quad {\lim_{n\to\infty}\frac{\log\det(K_n)}{c(K_n)} = \frac{\m(p(z,w))}{c(L)}}. \]

The following theorem implies that whenever 
Conjecture~\ref{conj:volbipy} holds with a strict
inequality, we obtain many infinite families of knots that satisfy the
Vol-Det Conjecture (Conjecture~\ref{conj:voldet}).

\begin{theorem}\label{Thm:voldet_ineq}
  Let $\LL$ be any biperiodic alternating link, with toroidally alternating quotient link $L$.
  Let $p(z,w)$ be the characteristic polynomial of the associated toroidal dimer model.
  Let $K_n$ be alternating hyperbolic links such that \mbox{$\displaystyle K_n\toF \LL$}.
  If $\volbp(L)< 2\pi\,\m(p(z,w))$, then $\vol(K_n) < 2\pi\log\det(K_n)$ for almost all $n$.
\end{theorem}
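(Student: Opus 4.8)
The plan is to divide the target inequality $\vol(K_n)<2\pi\log\det(K_n)$ by the positive integer $c(K_n)$ and compare the two sides as $n\to\infty$, exploiting the strict gap $\volbp(L)<2\pi\,\m(p(z,w))$ in the hypothesis to keep them apart.

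Two asymptotic inputs feed into this. On the determinant side, the density theorem of \cite{ck:det_mp} recalled above applies at once to $K_n\toF\LL$ and gives
$$ \lim_{n\to\infty}\frac{2\pi\log\det(K_n)}{c(K_n)}=\frac{2\pi\,\m(p(z,w))}{c(L)}. $$
On the volume side, I would invoke the geometric convergence results for F{\o}lner-convergent sequences of alternating links from \cite{ckp:gmax,ckp:density} (see also \cite{ckp:bal_vol}), which yield
$$ \limsup_{n\to\infty}\frac{\vol(K_n)}{c(K_n)}\ \le\ \frac{\vol((T^2\times I)-L)}{c(L)}, $$
and then combine this with inequality~\eqref{eq:volbp}, $\vol((T^2\times I)-L)\le\volbp(L)$, to get $\limsup_{n\to\infty}\vol(K_n)/c(K_n)\le\volbp(L)/c(L)$.

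Now the hypothesis does the work. Because $\volbp(L)<2\pi\,\m(p(z,w))$, chaining the two displays gives
$$ \limsup_{n\to\infty}\frac{\vol(K_n)}{c(K_n)}\ \le\ \frac{\volbp(L)}{c(L)}\ <\ \frac{2\pi\,\m(p(z,w))}{c(L)}\ =\ \lim_{n\to\infty}\frac{2\pi\log\det(K_n)}{c(K_n)}. $$
A $\limsup$ lying strictly below a genuine limit forces the two numerical sequences themselves to satisfy the strict inequality for all sufficiently large $n$ (choose $\varepsilon$ less than a third of the displayed gap on the right, then use the $\limsup$ bound on the left and convergence on the right past some index). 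Hence $\vol(K_n)/c(K_n)<2\pi\log\det(K_n)/c(K_n)$, i.e. $\vol(K_n)<2\pi\log\det(K_n)$, for all but finitely many $n$, which is exactly the assertion ``for almost all $n$''.

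I expect the main obstacle to be the volume estimate $\limsup_n\vol(K_n)/c(K_n)\le\vol((T^2\times I)-L)/c(L)$: unlike the rest, which is a short limit manipulation, this is a genuine piece of hyperbolic geometry, proved in the cited works by realizing the $K_n$ as large Dehn fillings whose geometry converges on compact sets to that of $(T^2\times I)-L$ and invoking Thurston's hyperbolic Dehn surgery theorem to control the growth of the volumes.
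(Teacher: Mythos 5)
Your overall skeleton---divide by $c(K_n)$, use the determinant density limit from Theorem~\ref{Thm:det_mp}, and let the strict gap $\volbp(L)<2\pi\,\m(p(z,w))$ separate the two density sequences for all large $n$---is exactly the paper's, and that final limit comparison is fine. The genuine gap is your volume-side input: the estimate $\limsup_{n}\vol(K_n)/c(K_n)\le \vol((T^2\times I)-L)/c(L)$ is not available in the cited works for an arbitrary F{\o}lner-convergent sequence. F{\o}lner convergence is a purely diagrammatic condition---nothing constrains the part of $K_n$ lying outside the subgraphs $G_n$---so the $K_n$ need not be Dehn fillings of, or geometrically convergent to, anything assembled from $(T^2\times I)-L$; the Thurston Dehn-surgery argument you sketch applies only to special families (e.g.\ weaving knots), not to every sequence with $K_n\toF\LL$. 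In fact, convergence of volume densities to $\vol((T^2\times I)-L)/c(L)$ is precisely the open volume density conjecture (Conjecture~6.5 of \cite{ckp:bal_vol}), and the paper explicitly notes in the proof of Corollary~\ref{cor:augmented} that it is not known; even the one-sided $\limsup$ bound through the hyperbolic volume of $(T^2\times I)-L$ has no proof to invoke.

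What is available, and what the paper actually proves, is the weaker bound through the bipyramid volume: Adams' theorem gives the pointwise diagrammatic inequality $\vol(K_n)\le\volbp(K_n)$, and Theorem~\ref{thm:volbp-density}---a combinatorial face-counting argument under F{\o}lner convergence, which is the real content of the paper's proof---shows $\volbp(K_n)/c(K_n)\to\volbp(L)/c(L)$. Together these yield $\limsup_{n}\vol(K_n)/c(K_n)\le\volbp(L)/c(L)$, which is all your comparison needs, and which also explains why the hypothesis is stated with $\volbp(L)$ rather than with $\vol((T^2\times I)-L)$. If you replace your unproved geometric-convergence step by this diagrammatic bound, the rest of your argument goes through verbatim.
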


Note that for any $\LL$ as in Theorem~\ref{Thm:voldet_ineq}, the
infinite families of knots or links satisfying the Vol-Det Conjecture include
almost all $K_n$ for {\em every} sequence \mbox{$\displaystyle K_n\toF \LL$}.

\subsection{Lower bounds for Mahler measure.}
Finding lower bounds for Mahler measure has intrigued mathematicians for more than 80 years. 
Kronecker's lemma implies that polynomials in $\Z[z]$ with $\m(p)=0$ are exactly products of cyclotomic polynomials and monomials. 
Lehmer \cite{Lehmer} first asked in 1933 whether there exists $\varepsilon>0$ such that for every $p(z) \in \Z[z]$ with $\m(p)>0$, it follows that $\m(p)>\varepsilon$.  
Lehmer's question remains open to this day, although there are several results on specific families of polynomials \cite{Breusch,Smyth1,BDM} and general 
lower bounds that depend on the degree of $p(z)$ \cite{Dobrowolski}. 

For any multivariable polynomial, Boyd and Lawton \cite{Boyd:speculations,Lawton} showed that its Mahler measure is given by a limit of Mahler measures of single variable polynomials. Therefore, in terms of Lehmer's question, a lower bound for 
single variable polynomials would automatically imply a lower bound for multivariable polynomials. Nevertheless, finding multivariable polynomials with low Mahler measure has also attracted interest and speculation \cite{Boyd:speculations}. 
Smyth \cite{Smyth-Kronecker} characterized multivariable polynomials with $\m(p)=0$, generalizing Kronecker's lemma.

For a two-variable polynomial $p(z,w)$, Smyth's proof involves the {\em Newton polygon} $\Delta(p)$ in $\R^2$, which is  
the convex hull of $\{(m,n)\in \Z^2\, |\, \mbox{ the coefficient of }z^mw^n \mbox{ in } p \mbox{ is non-zero}\}$.
For each side $\Delta_\ell$ of $\Delta(p)$, one can associate a one-variable polynomial $p_\ell$ whose coefficients are those of $p$ corresponding to the points on $\Delta_\ell$.  Smyth proved that for all $\Delta_\ell$,
\begin{equation}\label{eq:smyth}
\m(p_\ell)\leq \m(p).
\end{equation}

It is interesting to compare the bound in
Conjecture \ref{conj:volbipy} with Smyth's bound \eqref{eq:smyth}.
For the polynomials we consider in this paper,
Conjecture \ref{conj:volbipy} yields a much better bound, and it is
actually sharp in two examples, which are discussed in Section~\ref{sec:background}.
Let $\vtet\approx 1.0149$ be the
volume of the regular ideal tetrahedron, $\voct\approx 3.6638$ be the
volume of the regular ideal octahedron, and $v_{16}\approx 7.8549$ be
the volume of the regular ideal bipyramid $B_8$.
We consider the following polynomials, for which the results are summarized in the table below.
\begin{eqnarray*}
\P_1&=& 4 +(w+\frac{1}{w} +z +\frac{1}{z})\\
\P_2&=& 6-(w+\frac{1}{w} +z +\frac{1}{z}+\frac{w}{z}+\frac{z}{w} )\\
\P_3&=&- z(w^2-4w+1)+w^2+4w+1 \\
\P_4&=& (1+w^2)(1-z)^2-w(6+20z+6z^2)\\
\P_5&=& -w^2z^2 + 6w^2z + 6wz^2 - w^2 + 28wz - z^2 + 6w + 6z - 1
\end{eqnarray*}
\renewcommand{\arraystretch}{1.5}
\begin{tabular}{|l|l|l|l|}
\hline 
$p$ & $\m(p)$ & $\volbp(L)/2\pi$  & maximal $\m(p_\ell)$ \\
\hline
$\P_1$ &$\frac{2\voct}{2\pi}\approx 1.16624361$&$\frac{2\voct}{2\pi}\approx1.16624361$&$\m(z+1)=0$\\
$\P_2$ &$\frac{10\vtet}{2\pi}\approx1.61532973$&$\frac{10\vtet}{2\pi}\approx1.61532973$&$\m(z+1)=0$\\
$\P_3$ &$1.65546767$ &$\frac{10\vtet}{2\pi}\approx 1.61532973$& $\m(z^2+4z+1)\approx 1.31695789$\\
$\P_4$ &$2.79856868$&$\frac{10\vtet+2\voct}{2\pi}\approx 2.78157335$&$\m(z^2-6z+1)\approx 1.76274717$\\
$\P_5$ &$3.14673710$&$\frac{8\vtet+\voct+v_{16}}{2\pi} \approx 3.12553175$&$\m(z^2-6z+1)\approx 1.76274717$\\
\hline 
\end{tabular}
\renewcommand{\arraystretch}{1}


\subsection{A typical example for Conjecture~\ref{conj:volbipy}.}

Our proven examples are rather special because the characteristic
polynomials that lend themselves to the methods which allow us to
compute $\m(p)$ exactly seem to be special.  We pause here to
present a more typical but only numerically verified example for Conjecture~\ref{conj:volbipy}.

\begin{center}
\begin{figure}[h]
\includegraphics[height=2in]{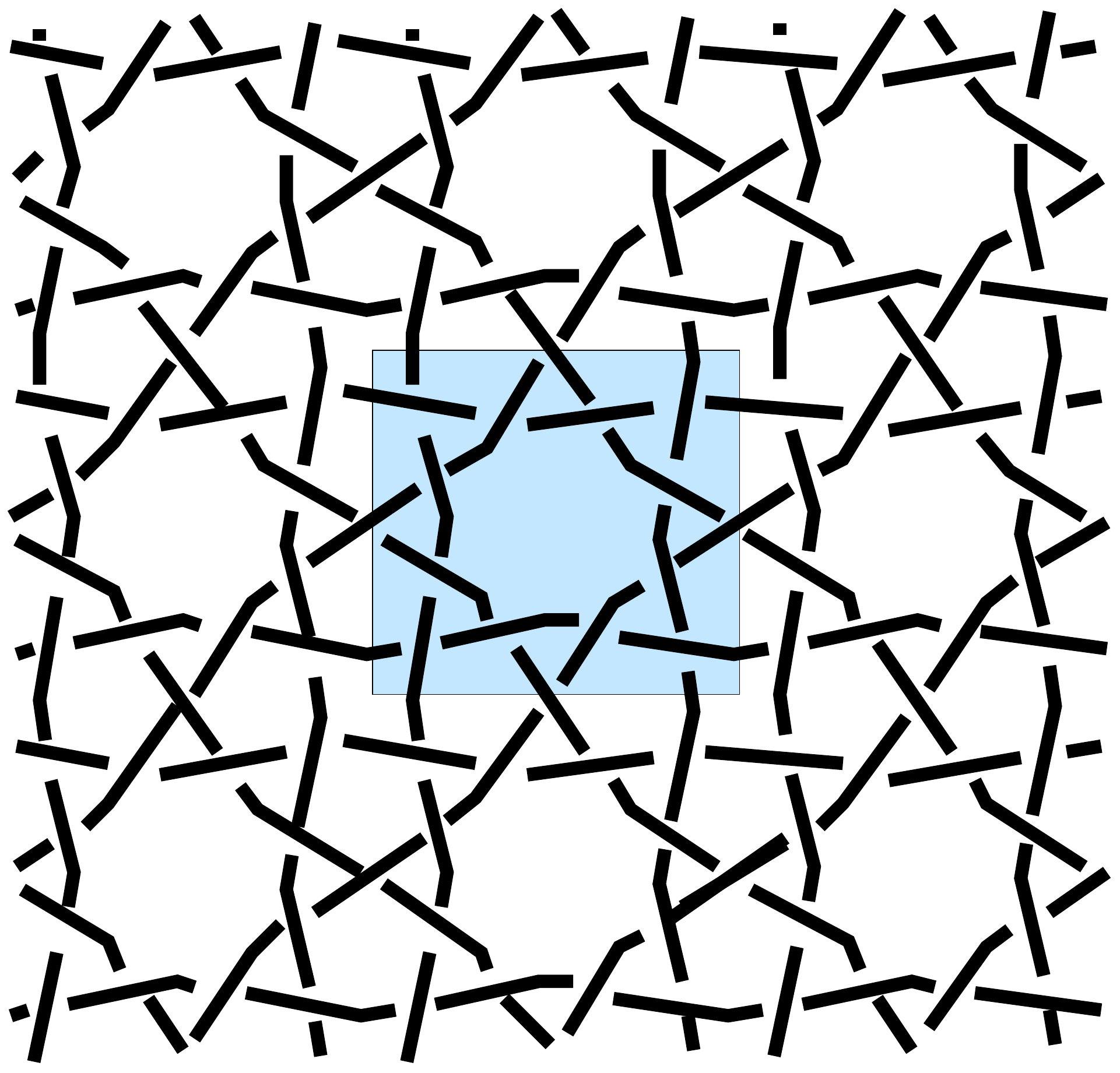}  
\caption{A typical biperiodic alternating link}
\label{fig:typical}
\end{figure}
\end{center}

Figure~\ref{fig:typical} shows the biperiodic alternating link $\LL$, and fundamental domain for its alternating quotient link $L$ in $T^2\times I$.
The fundamental domain for $L$ has one octagon, four pentagons, one square and eight triangles.  Thus, as $\vol(B_4)=\voct$ and $ \vol(B_3)=2\vtet$,
$$ \volbp(L) = \vol(B_8)+4\vol(B_5)+\voct+16\vtet \approx 47.704628. $$

Using SnapPy~\cite{snappy} inside Sage to verify the computation rigorously, we verified that
$$\vol((T^2\times I) - L) \approx 47.644829.$$ 

Using the method described in Section~\ref{sec:background}, we computed the following characteristic polynomial $p(z,w)$, which has genus $8$,
\begin{eqnarray*}
  p(z,w) &= & wz^2 + z^3 - 2wz + 104z^2 - 2z^3/w + w + 510z + 510z^2/w + z^3/w^2 - 2456z/w\\
  && + 104z^2/w^2 + 510/w + 1/z + 510z/w^2 + z^2/w^3 + 104/w^2 - 2/(wz) - 2z/w^3\\
  && + 1/w^3 + 1/(w^2z) + 104.\\
\end{eqnarray*}
Numerically, $2\pi\, \m(p)\approx 47.9214$, so $L$ satisfies Conjecture~\ref{conj:volbipy}, and inequality~\eqref{eq:volbp-3} within a range of $0.6\%$,
$$ \vol((T^2\times I) - L) < \volbp(L) < 2\pi\, \m(p). $$


\subsection{Organization}
In Section~\ref{sec:background}, we recall definitions, properties and
examples for the toroidal dimer model, F{\o}lner convergence of links,
Mahler measure and the Bloch-Wigner dilogarithm.  In
Section~\ref{sec:rth}, we prove Theorem~\ref{Thm:voldet_ineq}, as well
as its corollary, which gives a new bound on how much the volume of a
hyperbolic alternating link can change after drilling out an augmented
unknot.
In Section~\ref{sec:examples},
we prove six special cases of Conjecture~\ref{conj:volbipy}, and
provide numerical evidence to support it.

\subsection*{Acknowledgements}
We thank the organizers of the workshop {\em Low-dimensional topology
and number theory} at MFO (Oberwolfach Research Institute for Mathematics), where this work was started.
The first two authors acknowledge support by the Simons Foundation and
PSC-CUNY. The third author was partially supported by the  Natural Sciences and Engineering Research Council
of Canada [Discovery Grant 355412-2013]. We thank the anonymous referee for careful and thoughtful revisions.


\section{Background}
\label{sec:background}

\subsection{Toroidal dimer model}
\label{sec:tdimer}

The study of the dimer model is an active research area (see the
excellent introductory lecture notes \cite{cimasoni-survey,kenyon-survey-1}).
As mentioned in the Introduction, a {\em dimer covering} (or {\em perfect matching}) of a graph is a
pairing of adjacent vertices.  The dimer model on a graph $G$ is a
statistical mechanics model of the set of dimer coverings of $G$.

\subsubsection*{\bf Planar graphs}
Let $G$ be a finite balanced bipartite planar
graph, with edge weights $\mu_e$ for each edge $e$ in $G$.  The {\em
  Kasteleyn signs} are a choice of sign for each edge, such that
each face of $G$ with $0$ mod $4$ edges has an odd number of negative signs,
and each face with $2$ mod $4$ edges has an even number of negative signs.  A
{\em Kasteleyn matrix} $\K$ is a weighted, signed adjacency matrix of
$G$, such that rows are indexed by black vertices, and columns by
white vertices.  The matrix coefficients are $\pm\mu_e$, with the sign
given by the Kasteleyn sign on $e$.  Then, taking the sum over all
dimer coverings $M$ of $G$, the {\em partition function} $Z(G)$
satisfies (see \cite{cimasoni-survey,kenyon-survey-1}): 
$$ Z(G):= \sum_M \prod_{e\in M} \mu_e  =|\det \K|.$$  
With $\mu_e=1$ for every edge $e$, $Z(G)$ is the number of dimer coverings of $G$.  
Also see \cite{cdr-dimers} for relations between dimer coverings of planar graphs and knot theory.

\subsubsection*{\bf Toroidal graphs}
Now, let $G$ be a finite balanced bipartite toroidal graph.  As in the
planar case, we choose Kasteleyn signs on the edges of $G$.  We then
choose oriented simple closed curves $\gamma_z$ and $\gamma_w$ on
$T^2$, transverse to $G$, representing a basis of $H_1(T^2)$.  We
orient each edge $e$ of $G$ from its black vertex to its white vertex.
The weight on $e$ is
$$ \mu_e= z^{\gamma_z\cdot e}w^{\gamma_w\cdot e},$$ where $\cdot$
denotes the signed intersection number of $e$ with $\gamma_z$ or
$\gamma_w$.  For example, see Figure~\ref{square-fd1}.  The Kasteleyn
matrix $\K(z,w)$ is the weighted, signed adjacency matrix with rows indexed by
black vertices and columns by white vertices, and matrix entries
$\pm\mu_e$, with the sign given by the Kasteleyn sign on $e$.  The
{\em characteristic polynomial} is defined as
$$ p(z,w) = \det \K(z,w).$$
With $\mu_e$ as above, the number of dimer coverings of $G$ is given by (see \cite{cimasoni-survey,kenyon-survey-1}):
$$ Z(G) = \frac{1}{2} \ |-p(1,1) + p(-1,1) + p(1,-1) + p(-1,-1)|.$$

\begin{figure}

\begin{tikzpicture} 
\node at (0,0)
{ \includegraphics[height=1.75in]{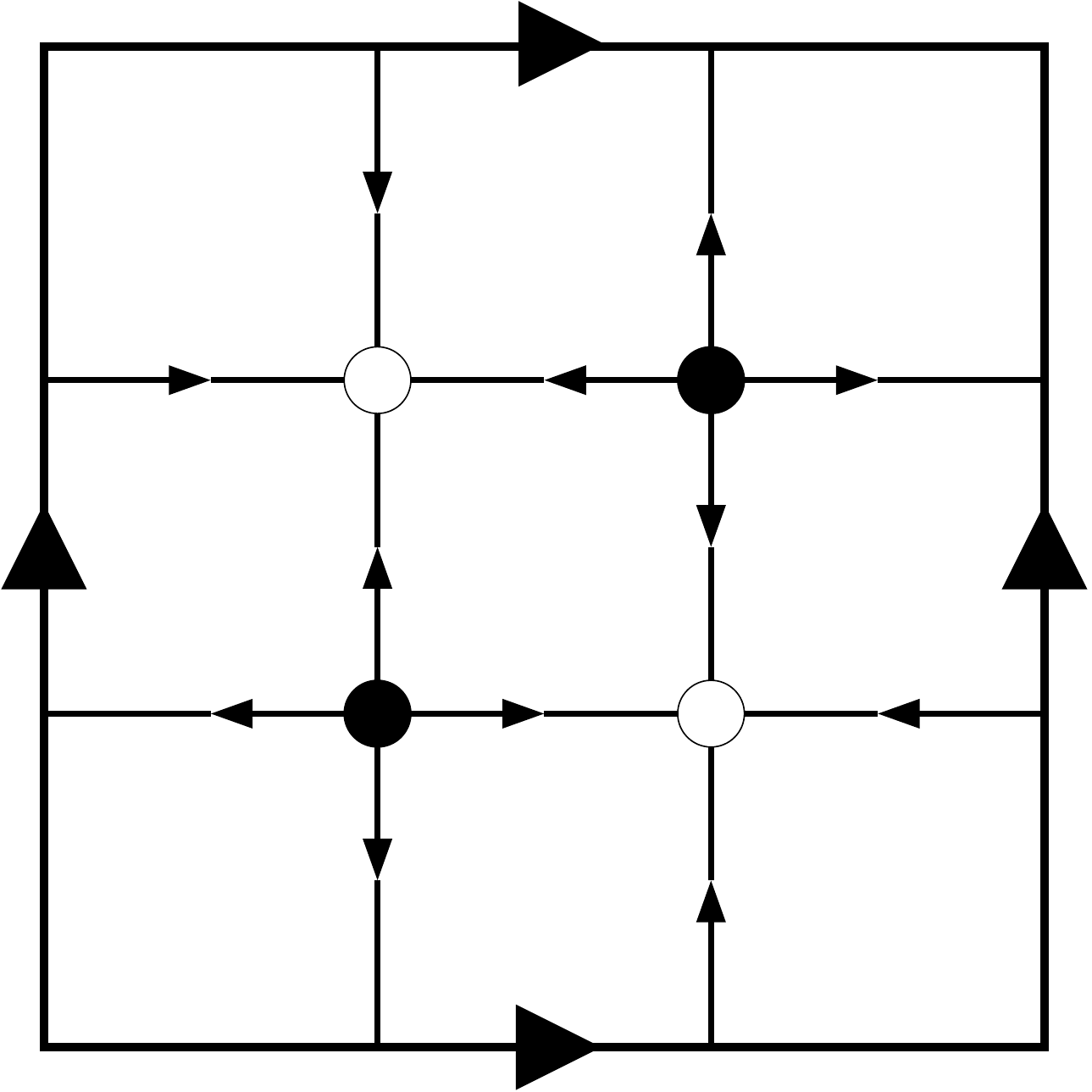}};
\node at (-2.5,0) {$\gamma_w$};
\node at (0,-2.5) {$\gamma_z$};
\node at (2.5,0) {$\gamma_w$};
\node at (0,2.5) {$\gamma_z$};
\node at (1,1) {2};
\node at (1,-1) {$2'$};
\node at (-1,1) {$1'$};
\node at (-1,-1) {1};
\node at (-0.3,0) {$\fbox{-}$};
\node at (-0.3,1.2) {$\fbox{-}$};

\draw[thick,->] (-6,0) -- (-4,0);
\draw[thick,->] (-5,-1) -- (-5,1);
\node at (-4,0.3) {$\gamma_z$};
\node at (-5.3,1) {$e$};
\node at (-5,-2) {$\mu_e=z$};

\draw[thick,->] (-10,0) -- (-8,0);
\draw[thick,<-] (-9,-1) -- (-9,1);
\node at (-8,0.3) {$\gamma_z$};
\node at (-9.3,1) {$e$};
\node at (-9,-2) {$\mu_e=\displaystyle{\frac{1}{z}}$};

\node at (-7,-3) {(a)};
\node at (0,-3) {(b)};

\end{tikzpicture}

\caption{(a) Edge weights $\mu_e= z^{\gamma_z\cdot e}$ to compute $\K(z,w)$. (b) Toroidal bipartite
  graph $G$ with a choice of Kasteleyn signs.}
\label{square-fd1}
\end{figure}

\subsubsection*{\bf Biperiodic graphs}
Let $G$ be a biperiodic bipartite planar graph, so that
translations by a two-dimensional lattice $\Lambda$ act by
isomorphisms of $G$.  Let $G_n$ be the finite balanced bipartite
toroidal graph given by the quotient $G/(n\Lambda)$.  Kenyon, Okounkov,
and Sheffield \cite{KOS} gave an explicit expression for the growth
rate of the toroidal dimer model on $\{G_n\}$:
\begin{theorem}\cite[Theorem~3.5]{KOS}\label{thm:KOS} Let $G$ be a biperiodic bipartite planar graph. Then
$$ \log Z(G) := \lim_{n\to\infty}\frac{1}{n^2}\log Z(G_n) = \m(p(z,w)).$$
\end{theorem}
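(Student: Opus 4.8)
\emph{Sketch of proof.} The plan is to exploit the $(\Z/n\Z)^2$ symmetry of $G_n = G/(n\Lambda)$ to block-diagonalize its Kasteleyn matrix, thereby rewriting $Z(G_n)$ as a small combination of products of values of the characteristic polynomial $p(z,w)$ at roots of unity, and then to recognize the normalized logarithm of this quantity as a Riemann sum converging to $\m(p)$.

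Concretely, first one fixes Kasteleyn signs and transverse curves $\gamma_z,\gamma_w$ on $G_1 = G/\Lambda$ and pulls them back to $G_n$, so that the resulting signs and edge weights are invariant under the deck group $\Lambda/(n\Lambda)\cong(\Z/n\Z)^2$. Since this group acts freely on $G_n$, a discrete Fourier transform in the two periodic directions conjugates the twisted Kasteleyn matrix $\K_n(z_0,w_0)$ into a block-diagonal matrix whose blocks are exactly the matrices $\K(\zeta,\xi)$ as $(\zeta,\xi)$ ranges over the $n^2$ pairs with $\zeta^n=z_0$, $\xi^n=w_0$, where $(z_0,w_0)\in\{\pm1\}^2$ records the twist. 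Taking determinants, the characteristic polynomial $p_n=\det\K_n$ of $G_n$ satisfies, up to sign,
$$ p_n(z_0,w_0)=\prod_{\zeta^n=z_0,\ \xi^n=w_0} p(\zeta,\xi), $$
so the toroidal Kasteleyn formula quoted above expresses
$$ Z(G_n)=\tfrac12\,\bigl|-p_n(1,1)+p_n(-1,1)+p_n(1,-1)+p_n(-1,-1)\bigr| $$
entirely in terms of the values of $p$ on the $(2n)$--th roots of unity in $\TT^2$.

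Next one takes $\tfrac1{n^2}\log$ of this identity. For each twist, $\tfrac1{n^2}\log\bigl|p_n(z_0,w_0)\bigr|=\tfrac1{n^2}\sum_{\zeta^n=z_0,\,\xi^n=w_0}\log|p(\zeta,\xi)|$ is a Riemann sum over an $n\times n$ grid on $\TT^2$ for
$$ \frac1{(2\pi\ii)^2}\iint_{\TT^2}\log|p(z,w)|\,\frac{\dd z}{z}\,\frac{\dd w}{w}=\m(p). $$
It then remains to prove two things: (i) each of these four Riemann sums actually converges to $\m(p)$; and (ii) the alternating combination defining $Z(G_n)$ does not cancel to leading exponential order, so that $\tfrac1{n^2}\log Z(G_n)$ inherits the same limit. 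Note that the upper bound $Z(G_n)\le 2\max_{z_0,w_0}|p_n(z_0,w_0)|$ already gives $\limsup_n \tfrac1{n^2}\log Z(G_n)\le\m(p)$ once (i) is in hand.

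The main obstacle is (i): the spectral curve $\{p=0\}$ may meet the unit torus $\TT^2$, so $\log|p|$ is only conditionally integrable there and the Riemann sums are not controlled by crude uniform bounds (and for some $n$ a grid point can even land on the curve, killing one of the four terms). This is exactly where the special nature of dimer characteristic polynomials enters: one shows, as in \cite{KOS}, that $\{p=0\}$ is a Harnack curve, so that its intersection with $\TT^2$ is a finite union of arcs along which $p$ has only a simple zero; this structure yields quantitative estimates of the form $\sum\log\frac1{|p(\zeta,\xi)|}=O(n\log n)$ for the $O(n)$ grid points nearest the curve, from which the Riemann sums converge to $\m(p)$. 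For (ii) one uses the sign structure of the torus Kasteleyn theory together with the positivity of $Z(G_n)$: among the four twisted determinants one always dominates, and the theta-function asymptotics of \cite{KOS} show that $Z(G_n)$ is comparable (bounded above and below by positive constants times) the dominant $|p_n(z_0,w_0)|$, so no exponential cancellation occurs. Combining (i) and (ii) gives $\lim_{n\to\infty}\tfrac1{n^2}\log Z(G_n)=\m(p)$.
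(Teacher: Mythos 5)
You are comparing against a statement the paper does not prove: Theorem~\ref{thm:KOS} is quoted verbatim from \cite{KOS}, so there is no internal argument to match, and your sketch is essentially an outline of the proof in \cite{KOS} itself — Fourier block-diagonalization over the deck group $\Lambda/n\Lambda$ giving $p_n(z_0,w_0)=\prod_{\zeta^n=z_0,\,\xi^n=w_0}p(\zeta,\xi)$, the four-term toroidal Kasteleyn formula, and Riemann-sum convergence to $\m(p)$. As a standalone proof it is incomplete in exactly the two places you flag, since you defer both (i) and (ii) back to \cite{KOS}, which is the very result being proved; within the context of this paper (which also simply cites \cite{KOS}) that is acceptable, but two corrections are worth recording. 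First, the structure you attribute to the Harnack property is wrong: the intersection of the spectral curve $\{p=0\}$ with $\TT^2$ is not a finite union of arcs but a finite set of points — for these dimer polynomials at most two, either a single real node or a pair of complex-conjugate simple zeros (e.g.\ $\P_1$ vanishes on $\TT^2$ only at $(-1,-1)$ and $\P_2$ only at $(1,1)$). The estimate actually needed, and proved in \cite{KOS}, is that the root-of-unity grid (for at least one of the four twists) stays polynomially far from these finitely many zeros, which makes the lower bound on the Riemann sums easier than the arc picture you describe; the upper bound $\limsup_n \frac{1}{n^2}\sum\log|p(\zeta,\xi)|\le\m(p)$ is routine since $\log|p|$ is bounded above on $\TT^2$. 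Second, no theta-function asymptotics are needed for non-cancellation: with unit edge weights each twisted determinant is a signed sum over dimer coverings, so $|p_n(z_0,w_0)|\le Z(G_n)$ for every twist, and together with $Z(G_n)\le 2\max_{z_0,w_0}|p_n(z_0,w_0)|$ from the four-term formula this sandwiches $\frac{1}{n^2}\log Z(G_n)$ within $\frac{\log 2}{n^2}$ of $\frac{1}{n^2}\log\max_{z_0,w_0}|p_n(z_0,w_0)|$, reducing the whole theorem to your step (i). With these repairs your outline is a faithful summary of the cited proof.
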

Thus, Theorem~\ref{thm:KOS} says that, independent of any choice of
Kasteleyn signs and homology basis for the $\Lambda$--action, the
growth rate of any toroidal dimer model is given by the Mahler measure
of its characteristic polynomial.

In \cite{ck:det_mp}, the first two authors defined the following notion of convergence of
links in $S^3$ to a biperiodic alternating link.
\begin{definition}[\cite{ck:det_mp,ckp:gmax}]\label{def:folner_converge}
We will say that a sequence of alternating links $K_n$ {\em F{\o}lner converges almost everywhere} to the biperiodic alternating link $\LL$, denoted by $K_n\toF\LL$,
if the respective projection graphs $\{G(K_n)\}$ and $G(\LL)$ satisfy the following:  There are subgraphs $G_n\subset G(K_n)$ such that 
\begin{enumerate}
\setlength\itemsep{0.5em}
\item[($i$)] $ G_n\subset G_{n+1}$, and $\bigcup G_n=G(\LL)$, 
\item[($ii$)] $\lim\limits_{n\to\infty}|\partial G_n|/|G_n|=0$, where $|\cdot|$ denotes number of vertices, and $\partial G_n\subset G(\LL)$ consists of the vertices of $G_n$ that share an edge in $G(\LL)$ with a vertex not in $G_n$,
\item[($iii$)] $ G_n\subset G(\LL)\cap(n\Lambda)$, where $n\Lambda$ represents $n^2$ copies of the $\Lambda$--fundamental domain for the lattice $\Lambda$ such that $L=\LL/\Lambda$,
\item[($iv$)] $ \lim\limits_{n\to\infty} |G_n|/ c(K_n) = 1$, where $c(K_n)$ denotes the crossing number of $K_n$.
\end{enumerate}
\end{definition}

\begin{theorem}\cite{ck:det_mp}\label{Thm:det_mp}
Let $\LL$ be any biperiodic alternating link, with toroidally alternating quotient link $L$. 
Let $p(z,w)$ be the characteristic polynomial of the associated toroidal dimer model.
Let $K_n$ be alternating links such that \mbox{$\displaystyle K_n\toF \LL$}.
Then
\[ \lim_{n\to\infty}\frac{\log\det(K_n)}{c(K_n)} = \frac{\m(p(z,w))}{c(L)}. \]
\end{theorem}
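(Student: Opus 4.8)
The plan is to convert each determinant into a count of dimer coverings of the overlaid graph and then invoke the Kenyon--Okounkov--Sheffield asymptotics (Theorem~\ref{thm:KOS}), using F{\o}lner convergence to pass to the thermodynamic limit. First I would reduce the determinant to a spanning-tree count. For a reduced alternating diagram one has $\det(K_n)=|\det(M+M^T)|$ with $M$ a Seifert matrix, and the symmetrized form $M+M^T$ is conjugate to a reduced Laplacian of the Tait graph $G_{K_n}$ (equivalently, the Goeritz matrix is such a Laplacian). By the matrix-tree theorem this gives
$$\det(K_n)=\tau(G_{K_n}),$$
the number of spanning trees of the Tait graph; this is the classical fact that the determinant of an alternating link counts spanning trees of either checkerboard graph (see \cite{lickorish}).

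Next I would pass from spanning trees to dimers via Temperley's bijection, which identifies the spanning trees of a planar graph $G$ with the dimer coverings of the associated balanced bipartite graph built from $G$, its dual $G^*$, and the crossings, with one boundary corner deleted. This is exactly the finite piece of the overlaid graph defined in the Introduction, so applied to the diagram $K_n$ it gives a bijection between $\tau(G_{K_n})$ and the dimer coverings of the overlaid graph $G^b_{K_n}$ of $K_n$ (with a corner removed). Since deleting one vertex changes the dimer count by a factor whose logarithm is $o(c(K_n))$, and $c(K_n)\to\infty$, we obtain
$$\frac{\log\det(K_n)}{c(K_n)}-\frac{\log Z(G^b_{K_n})}{c(K_n)}\longrightarrow 0.$$

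Finally I would identify the limit. By construction the finite overlaid graphs $G^b_{K_n}$ contain the F{\o}lner exhaustion $G_n\subset G(\LL)$ of the biperiodic overlaid graph $G^b_{\LL}$ whose dimer characteristic polynomial is $p(z,w)$. Theorem~\ref{thm:KOS} gives, for the toroidal quotients $G^b_n=G^b_{\LL}/(n\Lambda)$, that $\tfrac{1}{n^2}\log Z(G^b_n)\to\m(p)$; since one $\Lambda$--fundamental domain carries $c(L)$ crossings (equivalently $c(L)$ white vertices), the dimer entropy per crossing equals $\m(p)/c(L)$. The conditions of Definition~\ref{def:folner_converge} then let me replace the toroidal exhaustion by the free-boundary planar pieces: condition ($ii$), $|\partial G_n|/|G_n|\to 0$, makes the dimer entropy boundary-independent, so $\tfrac{1}{|G_n|}\log Z(G^b_{K_n})\to\m(p)/c(L)$, while condition ($iv$), $|G_n|/c(K_n)\to 1$, upgrades this to the normalization by $c(K_n)$. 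Chaining the three displays yields $\lim_{n}\log\det(K_n)/c(K_n)=\m(p)/c(L)$.

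The main obstacle is the last step: proving that the dimer (equivalently spanning-tree) entropy is insensitive to boundary conditions, so that the free-boundary F{\o}lner exhaustion realizes the same limit as the toroidal KOS exhaustion. One must bound the difference between the free and periodic partition functions by a factor of bounded exponent per boundary vertex and then use amenability, $|\partial G_n|/|G_n|\to 0$, to kill the discrepancy; condition ($iii$) guarantees the exhaustion is genuinely by fundamental domains so the per-domain rate matches $\m(p)$. A secondary point requiring care is arranging Temperley's bijection so that the finite overlaid graph of $K_n$ is precisely the subgraph of the quotient of $G^b_{\LL}$ used to define $p(z,w)$, ensuring the matching of normalizations between $c(K_n)$, $|G_n|$, and $c(L)$.
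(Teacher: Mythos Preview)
This theorem is not proved in the present paper: it is quoted from \cite{ck:det_mp} and used as a black box in the proof of Theorem~\ref{Thm:voldet_ineq}. There is therefore no proof here to compare against.

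That said, your outline is essentially the argument of \cite{ck:det_mp}: identify $\det(K_n)$ with the spanning-tree count of the Tait graph, pass to dimers on the overlaid graph via Temperley's bijection, and then show that the F{\o}lner conditions force the per-crossing dimer entropy to agree with the toroidal KOS value $\m(p)/c(L)$. Two remarks. First, the sentence ``$M+M^T$ is conjugate to a reduced Laplacian of the Tait graph'' is not correct as written; the Goeritz matrix, not the symmetrized Seifert matrix, is the reduced Laplacian, and the equality $\det(K_n)=\tau(G_{K_n})$ for reduced alternating diagrams is obtained via the Goeritz/Kauffman route rather than directly from $M+M^T$. The conclusion you need is correct, just not the justification. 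Second, you have correctly isolated the genuine content: the boundary-insensitivity of the spanning-tree (or dimer) growth rate under F{\o}lner exhaustion. In \cite{ck:det_mp} this is handled on the spanning-tree side (via results of Lyons on tree entropy for amenable sequences, which gives exactly the statement that $\tfrac{1}{|G_n|}\log\tau(G_n)$ converges to the tree entropy of the infinite graph, independent of boundary conditions), rather than by directly comparing free and periodic dimer partition functions as you propose. Either route works, but invoking the existing spanning-tree-entropy machinery is cleaner than redoing the boundary estimate for dimers.
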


Finally, all of our examples of biperiodic alternating links below satisfy the
hypotheses of \cite[Theorem~7.5]{ckp:bal_vol}, which implies that the
link diagram admits an embedding into $\R^2$ for which the faces are
cyclic polygons.  Such nice geometry allows us to draw the diagrams
for their overlaid graphs with vertices at the centers of the
corresponding circles.

\subsubsection*{\bf Example 1: Square weave}

\begin{figure}[h] 
\begin{tabular}{cc}
 \includegraphics[height=1.5in]{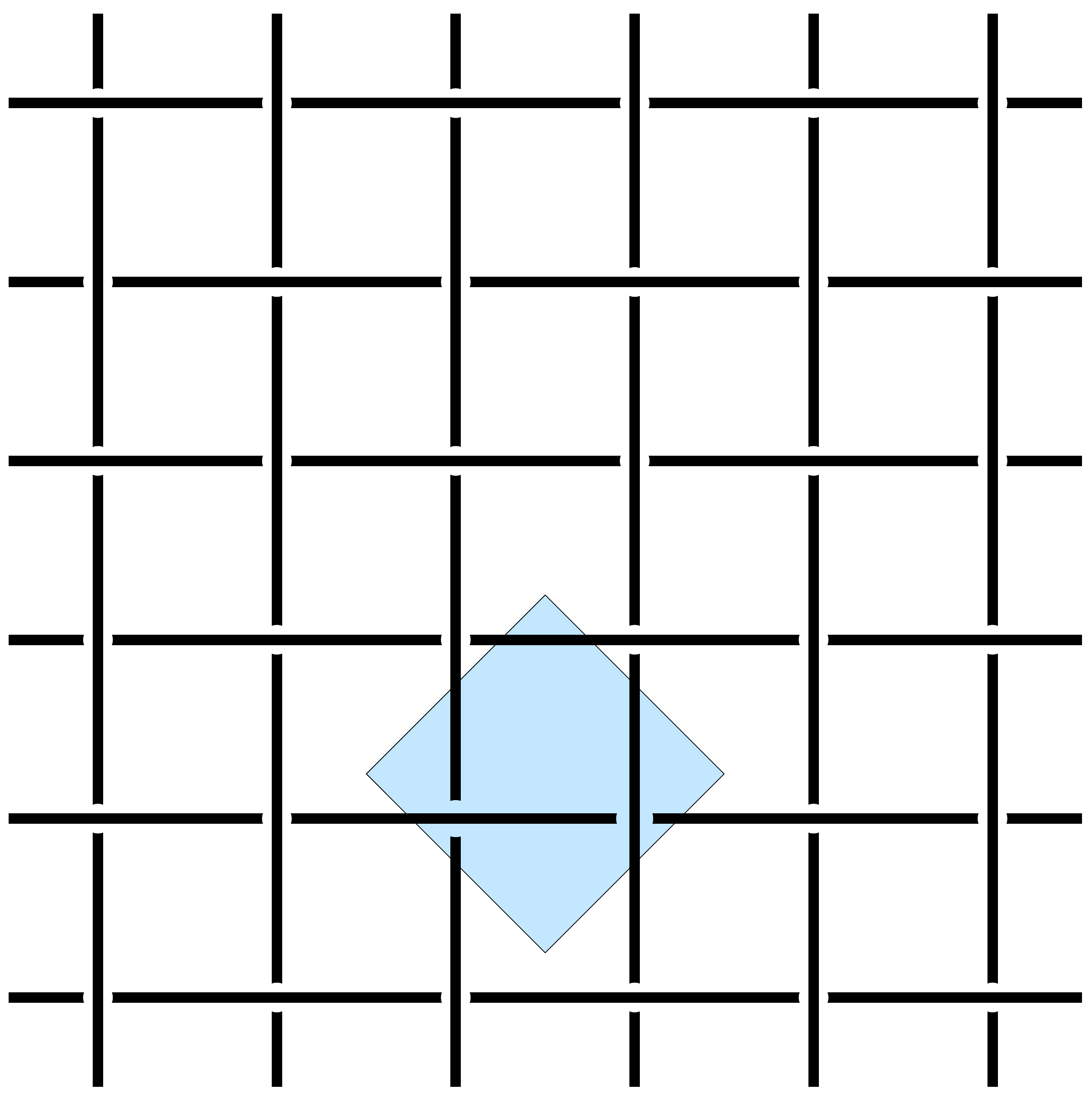} &  \quad \quad \quad \quad 
 \includegraphics[height=1.5in]{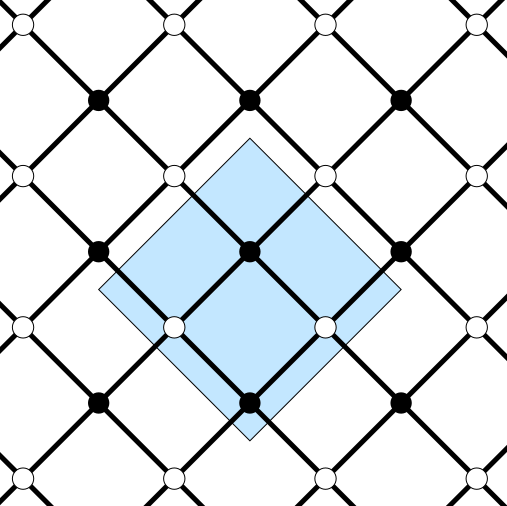}    \\
  (a) & \quad  \quad \quad \quad  (b)   \\
\end{tabular}
\caption{(a) Infinite square weave $\W$ and fundamental domain for $W$.
  (b) Overlaid graph $G_{\W}^b$ and fundamental domain for $G_W^b$.}
\label{fig-square}
\end{figure}

Figure~\ref{fig-square}(a) shows the infinite square weave $\W$, with a choice of fundamental domain, giving a toroidally alternating link $W$ with $c(W)=2$.
Both of the Tait graphs of $\W$ are the infinite square grid.
The overlaid graph $G_{\W}^b$ is shown in Figure~\ref{fig-square}(b), 
with the fundamental domain for $G_{W}^b$, which matches the toroidal graph shown in
Figure \ref{square-fd1}(b).

We can now compute $p(z,w)= \det\K(z,w)$ for $G=G_{W}^b$, as described above, and in more detail in \cite{cimasoni-survey,kenyon-survey-1}.
Using Figure~\ref{square-fd1}(b) with the ordering as shown, 
\begin{equation}
\label{eqn:charpol-weave} 
\K(z,w)=
\begin{bmatrix}
-1 - 1/z& 1 + w\cr
1+1/w& 1+z
\end{bmatrix},
\quad p(z,w) = -\left(4 + \frac{1}{w} + w + \frac{1}{z} +z\right).
\end{equation}
By Theorem~\ref{thm:mm-weave} below,
$\displaystyle 2\pi\, \m(p(z,w)) =  2\,\voct$.
By \cite{ck:det_mp,ckp:gmax}, it follows that for $K_n\toF\W$,
$\voct\approx 3.66386$ is the limit of both determinant densities and volume densities:
$$\lim_{n\to \infty} \frac{2 \pi \log \det(K_n)}{c(K_n)}
=\frac{2 \pi \m( p(z,w))}{c(W)}
=\voct 
=\lim_{n\to \infty} \frac{\vol(K_n)}{c(K_n)}.
$$

\subsubsection*{\bf Example 2: Triaxial link}

Figure~\ref{fig:triaxial}(a) shows part of the biperiodic alternating
diagram of the triaxial link $\LL$, and the fundamental domain for the
toroidally alternating link $L$ with $c(L)=3$.  Its projection graph
$G(\LL)$ is the trihexagonal tiling.  The Tait graphs of $\LL$ are the
regular hexagonal and triangular tilings, which form the biperiodic balanced bipartite overlaid
graph $G_{\LL}^b$, shown in Figure~\ref{fig:triaxial}(b).

We can now compute $p(z,w)= \det\K(z,w)$ for $G=G_{L}^b$, as above.
Using Figure~\ref{fig:triaxial}(c),
with the homology basis, ordered vertices and a choice of Kasteleyn signs on edges as shown,

\begin{equation}\label{eqn:charpol-triaxial}
\K(z,w)=
\begin{bmatrix}
1 & z & w \cr
1& 1& 1 \cr
1/z-1/w & 1/w-1 & 1-1/z 
\end{bmatrix}, 
\quad p(z,w)= 6-\left(\frac{1}{w}+w+\frac{1}{z}+z+\frac{w}{z}+\frac{z}{w}\right).
\end{equation}
By Theorem~\ref{thm:mm-triaxial} below,
$\displaystyle 2\pi\,\m(p(z,w)) = 10\vtet$, where
$\vtet\approx 1.01494$.
By \cite{ck:det_mp}, for $K_n\toF\LL$,
$$\lim_{n\to \infty} \frac{2 \pi \log \det(K_n)}{c(K_n)}
=\frac{2 \pi \m( p(z,w))}{c(L)}
=\frac{10\vtet}{3}. $$
Moreover, by \cite{ckp:bal_vol}, 
$$ \lim_{n\to \infty} \frac{\vol(K_n)}{c(K_n)} = \frac{\vol(T^2 \times I - L)}{c(L)} = \frac{10\vtet}{3}.$$

For the square weave and the triaxial link, the volume and
determinant densities both converge to the volume density of the
toroidal link, but we do not know of any other such examples.
The strict inequality satisfied by all the other examples in Section~\ref{sec:examples} seems to be more typical.
The first two authors and Purcell compute the exact hyperbolic volume of infinitely many other such biperiodic alternating links in \cite{ckp:bal_vol}.

\begin{figure}
\begin{tabular}{ccc}
\includegraphics[height=1.7in]{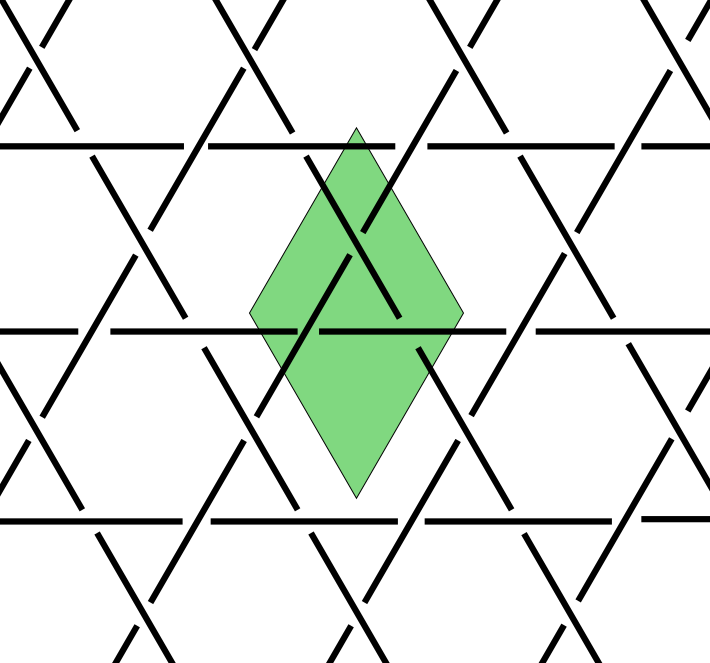} &
\includegraphics[height=1.7in]{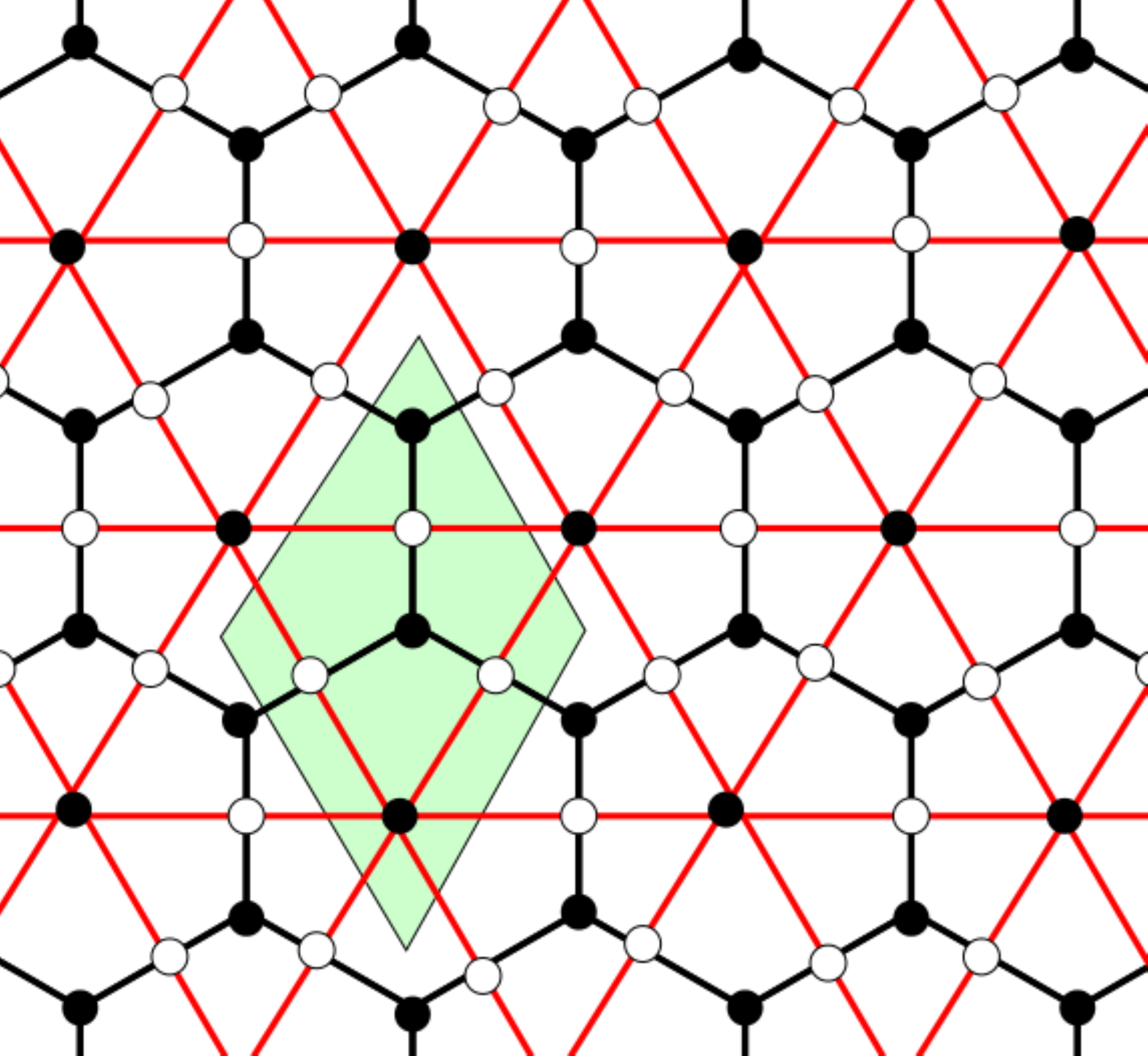} &

\begin{tikzpicture} 
\node at (0,0)
{ \includegraphics[height=2in]{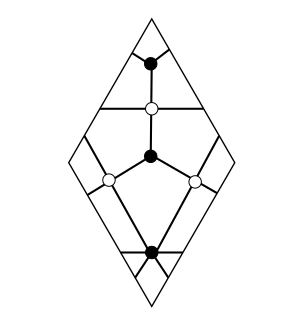}};
\node at (0,1.8) {\tiny{$1$}};
\node at (0.2,1.1) {\tiny{$1'$}};
\node at (0.25,0.25) {\tiny{$2$}};
\node at (-0.4,-0.4) {\tiny{$2'$}};
\node at (0.4,-0.4) {\tiny{$3'$}};
\node at (0,-1.1) {\tiny{$3$}};

\draw[thick,->] (-1.65,0) -- (-.6,1.8);
\node at (-1.65,1) {$\gamma_w$};
\draw[thick,->] (-1.65,0) -- (-.6,-1.8);
\node at (-1.65,-1) {$\gamma_z$};

\node at (-.25,-.65) {$-$};
\node at (-.4,.7) {$-$};
\node at (.7,.25) {$-$};

\end{tikzpicture}
\\
(a) &  (b) &  (c) \\
\end{tabular}
\caption{(a) Diagram of biperiodic triaxial link $\LL$, and fundamental
  domain for $L$.  (b) Overlaid graph $G_{\LL}^b$ and fundamental
  domain for $G_L^b$.  (c) Toroidal graph $G_L^b$, with
  a choice of homology basis, ordered vertices and a choice of
  Kasteleyn signs on edges.}
\label{fig:triaxial}
\end{figure}


\subsection{General Mahler measure theory}

Let $P(x_1,\ldots,x_n) \in \C[x_1^{\pm},\ldots,x_n^{\pm}]$ be non-zero, and 
let $\mathbb{T}^n$ denote the unit torus in $\C^n$. 
The logarithmic {\em Mahler measure} of $P$ is defined by
$$\m(P)=\frac{1}{(2\pi i)^n}\int_{\mathbb{T}^n} 
\log|P(x_1,\ldots,x_n)|\;\frac{dx_1}{x_1}\cdots\frac{dx_n}{x_n}.$$

We now describe the general method for finding the exact Mahler measure of certain two-variable polynomials, which was developed by Boyd and Rodriguez-Villegas \cite{BoydRV2002,brv03}.
See also the discussion leading to \cite[Theorem 2]{Vandervelde}.
Let $P(x,y) \in \C[x,y]$ be a nonzero polynomial of degree $d$ in $y$. 
Let $Y$ be the zero locus of $P(x,y)$ and let $X$ be a smooth projective completion of $Y$.
If we think of $\C[x,y]=\C[x][y]$, then we may write
\[P(x,y)=P^*(x)(y-y_1(x))\cdots (y-y_d(x)),\]
where $y_i(x)$ are algebraic functions of $x$.

By applying Jensen's formula with respect to the variable $y$, to the integral in the definition of Mahler measure, we obtain
\begin{align*}
\m(P(x,y))-\m(P^*(x))= &  \frac{1}{(2 \pi i)^2} \int_{\TT^2}\log |P(x,y)|\frac{dx}{x}\frac{dy}{y}-\m(P^*)\\
=& \frac{1}{(2 \pi i)^2} \int_{\TT^2}\sum_{j=1}^d \log |y-y_j(x)|\frac{dx}{x}\frac{dy}{y}\\
=& \frac{1}{2\pi i}\sum_{j=1}^d  \int_{|x|=1,|y_j(x)|\geq 1} \log|y_j(x)|\frac{dx}{x}\\
=& -\frac{1}{2\pi}  \sum_{j=1}^d \int_{|x|=1,|y_j(x)|\geq 1}  \eta(x,y_j),
\end{align*}
where 
\[\eta(x,y) := \log |x| d \arg y - \log|y| d \arg x\]
is a closed differential form, and $d \arg z= \im(dz/z)$.
We have that 
\[\eta(z,1-z)=dD(z),\]
where $D(z)$ is the Bloch--Wigner dilogarithm given by  
\begin{equation}\label{eq:bw-dilog}
D(z)= \im(\LLi_2(z))+\arg(1-z) \log|z|,
\end{equation}
and
\[\LLi_2(z)=-\int_0^z\frac{\log(1-t)}{t}dt\] 
is the classical dilogarithm. While the value of the classical dilogarithm is dependent on the integration path, 
$D(z)$ is a single-valued continuous function in $\mathbb{P}^1(\C)$ which is real analytic in $\C - \{0,1\}$.    
 
If we can write 
\begin{equation}\label{eq:condition}
x\wedge y_j = \sum_{j_k} \alpha_{j_k} (z_{j_k}\wedge (1-z_{j_k})),
\end{equation}
in $\C(X)^* \wedge \C(X)^*$, 
then we have
\begin{equation}\label{eq:sum}
\m(P(x,y))-\m(P^*(x))= -\frac{1}{2\pi} \sum_{j=1}^d \sum_{j_k} \alpha_{j_k} \left . D(z_{j_k})\right|_{\partial \{|x|=1,|y_j(x)|\geq 1\}}.
\end{equation}

It is not clear a priori that equation~\eqref{eq:condition} can be
solved for any given $P(x,y)$.
Champanerkar~\cite{Cthesis} showed that for the $A$--polynomial of any $1$--cusped hyperbolic $3$--manifold, \eqref{eq:condition} can be solved using Thurston's gluing equations for ideal triangulations.
In addition, if the curve attached to our
polynomial has genus 0, then it can be parametrized (see \cite{Vandervelde}).  In this case, we
will get a solution to \eqref{eq:condition}, possibly with
some extra terms of the form $c\wedge z$, where $c$ is a constant, and
$z$ is a function. Then, we can still reach a closed formula by
integrating $\eta(c,z)$ directly.
Note that $\eta(\omega,z)=0$ when $\omega$ is a root of unity. Thus,
it is more convenient to work in $(\C(X)^*\wedge \C(X)^*)_\Q$, where the subscript 
indicates tensoring by $\Q$, resulting in the torsion elements removed from consideration.

\begin{lemma} \label{lemma} Let $a,b,c,d \in \C$ and $t$ be a variable.  If $ad-bc\not = 0$, we have, in $(\C(t)^*\wedge \C(t)^*)_\Q$,
\[(at+b)\wedge (ct+d)=\frac{act+bc}{ad-bc}\wedge \frac{act+ad}{ad-bc}-(ad-bc)\wedge \frac{act+bc}{act+ad}
-c\wedge (ct+d)-(at+b)\wedge a - c\wedge a.\] 
\end{lemma}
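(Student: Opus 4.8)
The plan is to produce the element $z\in\C(t)^*$ whose dilogarithm term $z\wedge(1-z)$ accounts for the first wedge on the right-hand side, and then to obtain the identity by expanding everything $\Z$-bilinearly and collecting the ``junk'' terms. We may assume $a\neq 0$ and $c\neq 0$, since otherwise the right-hand side is not even defined. Write $N:=ad-bc\neq 0$ and record the factorizations
\[ act+bc=c(at+b),\qquad act+ad=a(ct+d), \]
so that the first term on the right-hand side is $\frac{c(at+b)}{N}\wedge\frac{a(ct+d)}{N}$ and the argument of $N\wedge(\cdot)$ in the second term is $\frac{c(at+b)}{a(ct+d)}$.

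The key point is the substitution
\[ z:=\frac{act+bc}{bc-ad}=\frac{-c(at+b)}{N},\qquad\text{for which}\qquad 1-z=\frac{(bc-ad)-(act+bc)}{bc-ad}=\frac{act+ad}{ad-bc}=\frac{a(ct+d)}{N}. \]
Since we work in $(\C(t)^*\wedge\C(t)^*)_\Q$, every torsion element — in particular $-1$ — is killed, so $z$ and $\frac{act+bc}{N}$ agree modulo torsion in the first slot, and hence
\[ z\wedge(1-z)=\frac{act+bc}{N}\wedge\frac{act+ad}{N}=\frac{c(at+b)}{N}\wedge\frac{a(ct+d)}{N}. \]
Next I would expand the last expression: writing each factor as a product $c\cdot(at+b)\cdot N^{-1}$, respectively $a\cdot(ct+d)\cdot N^{-1}$, multiplying out the nine resulting wedges, and using $u\wedge u=0$ together with $u^{-1}\wedge v=-(u\wedge v)$, one obtains
\[ z\wedge(1-z)=c\wedge a+c\wedge(ct+d)-c\wedge N+(at+b)\wedge a+(at+b)\wedge(ct+d)-(at+b)\wedge N-N\wedge a-N\wedge(ct+d). \]

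It then remains to rearrange. Solving the displayed identity for $(at+b)\wedge(ct+d)$ and regrouping the four terms having $N$ in one slot via
\[ -N\wedge c-N\wedge(at+b)+N\wedge a+N\wedge(ct+d)=-N\wedge\frac{c(at+b)}{a(ct+d)}=-N\wedge\frac{act+bc}{act+ad} \]
yields exactly
\[ (at+b)\wedge(ct+d)=\frac{act+bc}{N}\wedge\frac{act+ad}{N}-N\wedge\frac{act+bc}{act+ad}-c\wedge(ct+d)-(at+b)\wedge a-c\wedge a, \]
which is the claim. There is no conceptual obstacle here — the proof is a purely formal computation in $(\C(t)^*\wedge\C(t)^*)_\Q$ — but two points need care. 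The first is finding the right $z$: one must notice that the two linear forms $at+b$ and $ct+d$ fit together, after dividing by $\pm N$, into a complementary pair $z,\,1-z$, which is precisely what makes the term $z\wedge(1-z)=dD(z)$ usable for the Mahler-measure computation, the remaining wedges being of the innocuous ``constant $\wedge$ function'' type that can be integrated directly. The second is the torsion bookkeeping: the stated identity genuinely fails in $\C(t)^*\wedge\C(t)^*$ before tensoring with $\Q$ — the sign discrepancy between $z$ and $\frac{act+bc}{N}$ must be absorbed — so it is essential that we have already passed to the $\Q$-tensored group, exactly as explained in the paragraph preceding the lemma.
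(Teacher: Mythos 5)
Your proposal is correct and follows essentially the same route as the paper: both identify the same pair $z=\frac{act+bc}{bc-ad}$, $1-z=\frac{act+ad}{ad-bc}$, expand $z\wedge(1-z)$ bilinearly using the factorizations $act+bc=c(at+b)$, $act+ad=a(ct+d)$, regroup the $(ad-bc)$--terms into $-(ad-bc)\wedge\frac{act+bc}{act+ad}$, and absorb the sign discrepancy via the torsion relation $(-x)\wedge y=x\wedge y$ in $(\C(t)^*\wedge\C(t)^*)_\Q$. The only differences are cosmetic (where the sign is handled and how the nine-term expansion is grouped), so no further comment is needed.
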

\begin{proof} 
\begin{align*}
\frac{-(act+bc)}{ad-bc}\wedge \frac{act+ad}{ad-bc}=&(-act-bc)\wedge (act+ad)-(-act-bc)\wedge (ad-bc)-(ad-bc) \wedge (act+ad)\\
=&(-act-bc)\wedge (act+ad)+(ad-bc)\wedge \frac{-(act+bc)}{act+ad}
\end{align*}
and
\[(-act-bc)\wedge (act+ad)=(-at-b)\wedge (ct+d)+c\wedge (ct+d)+(-at-b)\wedge a + c\wedge a,\]
and we finally use that $(-x)\wedge y = x \wedge y$.
\end{proof}

\subsubsection*{\bf Properties of the Bloch-Wigner dilogarithm}

We record here some useful properties of the Bloch--Wigner dilogarithm given by \eqref{eq:bw-dilog}. 
A good reference in the subject is Zagier~\cite{Zagier88}.

Its most fundamental property is the five-term relationship
\begin{equation}\label{eq:five-term}
D(x)+D(y)+D(1-xy)+D\left(\frac{1-x}{1-xy}\right)+D\left(\frac{1-y}{1-xy}\right)=0.
\end{equation}
We will often refer to equation~\eqref{eq:five-term} as ``the five-term relation generated by $x$ and $y$.''
In particular,
\begin{equation}\label{eq:inverse}
D\left(\frac{1}{z}\right)=-D(z),{\rm\ and\ }   D(1-z)=-D(z). 
\end{equation}
In addition, we have,
\begin{equation} \label{eq:conjugate}
 D(\overline{z})=-D(z). 
\end{equation}
This identity, which is independent of the five-term relation, implies that $\left . D\right|_\R=0$. 

By taking the five-term relation generated by $z$ and $-z$, we obtain
\begin{equation}\label{eq:square}
2D(z)+2D(-z)=D(z^2).
\end{equation}

Finally, we record a property that expresses $D(z)$ as a combination of dilogarithms evaluated at complex numbers of norm $1$. 
\begin{equation}\label{eq:unitary}
D(z)=\frac{1}{2}\left(D\left(\frac{z}{\overline{z}}\right)+D\left(\frac{1-1/z}{1-1/\overline{z}}\right)
+D\left(\frac{1/(1-z)}{1/(1-\overline{z})}\right)\right).
\end{equation}


\section{Applications}
\label{sec:rth}

\subsection{Proof of the Vol-Det Conjecture for infinite families of links}
In this section, we prove Theorem~\ref{Thm:voldet_ineq}.
We will refer to the notation used in Definition~\ref{def:folner_converge}.

Let $K$ be any hyperbolic alternating link with a reduced alternating diagram, for which the number of bounded $i$--faces of $G(K)$ is $b_i$, for all $i>1$.
By~\cite[Theorem 4.1]{Adams:bipyramids}, we get a volume bound for $K$, which is similar to
equation~\eqref{eq:volbp} for links in $T^2\times I$, by excluding the unbounded face
of the planar link diagram:
$$\vol(K) \leq \volbp(K):= \sum_{f \in \{\substack{\text{bounded}\\ \text{faces of}}
  \ K\}}\vol(B_{|f|}) = \sum\nolimits_i b_i\,\vol(B_i).$$

\begin{theorem}\label{thm:volbp-density}
  Let $\LL$ be any biperiodic alternating link, with toroidally alternating quotient link $L$. 
  Let $K_n$ be alternating hyperbolic links such that \mbox{$\displaystyle K_n\toF \LL$}.  Then
$$ \lim_{n\to\infty}\frac{\volbp(K_n)}{c(K_n)} = \frac{\volbp(L)}{c(L)}.$$
\end{theorem}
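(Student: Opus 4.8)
The plan is to exploit the Følner convergence $K_n\toF\LL$ in exactly the same way that \cite{ck:det_mp} uses it to prove Theorem~\ref{Thm:det_mp}, replacing the additive quantity $\log\det$ by the additive quantity $\volbp$. Both sides of the asserted limit are sums over faces weighted by the constants $\vol(B_i)$, so the statement reduces to a counting statement about how the face-degree distribution of $G(K_n)$ stabilizes to that of $G(\LL)$ inside large Følner pieces. Concretely, write $\volbp(K_n)=\sum_{i>1} b_i(K_n)\,\vol(B_i)$, where $b_i(K_n)$ is the number of bounded $i$--faces of the planar diagram $G(K_n)$, and similarly let $b_i(L)$ be the number of $i$--faces of the toroidal quotient $G_L$ in a fundamental domain, so that $\volbp(L)=\sum_{i>1} b_i(L)\,\vol(B_i)$.

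First I would fix the subgraphs $G_n\subset G(K_n)$ from Definition~\ref{def:folner_converge}, with $G_n\subset G_{n+1}$, $\bigcup G_n=G(\LL)$, $G_n\subset G(\LL)\cap(n\Lambda)$, and $|G_n|/c(K_n)\to 1$. The key geometric observation is that a face of $G(\LL)$ all of whose bounding edges and vertices lie in the \emph{interior} of $G_n$ (i.e.\ away from $\partial G_n$) is literally a face of $G(K_n)$ of the same degree, since in the interior the diagram of $K_n$ agrees with that of $\LL$. Since $G_n\subset G(\LL)\cap(n\Lambda)$ consists of $n^2$ copies of the $\Lambda$--fundamental domain up to a boundary error, the number of such interior faces of degree $i$ is $n^2\,b_i(L)$ up to an error of order $|\partial G_n|$. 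Faces of $K_n$ not accounted for this way — those meeting $\partial G_n$, plus the single unbounded face, which is excluded from $\volbp$ anyway — number $O(|\partial G_n|)$, and each contributes at most $\vol(B_{|f|})$; crucially the diagram is reduced, so $\sum_i i\, b_i(K_n) \le 4\,c(K_n)$ (each crossing is incident to four face-corners), and Adams' bounds give $\vol(B_i)\le (i-2)\cdot(\text{const})$, so the total bipyramid volume carried by the boundary faces is $O(|\partial G_n|)$. Therefore
$$\Bigl|\volbp(K_n) - n^2\volbp(L)\Bigr| = O(|\partial G_n|).$$

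Now I would divide by $c(K_n)$ and take the limit. By Definition~\ref{def:folner_converge}($iv$), $|G_n|/c(K_n)\to 1$; since $|G_n|$ is comparable to $n^2|{\rm FD}|$ (a fundamental domain has a fixed finite number of vertices), we get $n^2/c(K_n)\to 1/(\text{vertices per FD})$, and likewise $c(L)$ is that same count scaled appropriately, so that $n^2 c(L)/c(K_n)\to 1$. Meanwhile Definition~\ref{def:folner_converge}($ii$) gives $|\partial G_n|/|G_n|\to 0$, hence $|\partial G_n|/c(K_n)\to 0$ and the error term vanishes in the limit. Combining, $\volbp(K_n)/c(K_n)\to \volbp(L)/c(L)$.

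The main obstacle I anticipate is bookkeeping at the boundary: making precise the claim that the faces of $G(K_n)$ split cleanly into ``interior faces = faces of $\LL$'' plus ``$O(|\partial G_n|)$ leftover faces,'' and that the leftover faces carry total bipyramid volume $O(|\partial G_n|)$ rather than something larger. This requires a uniform bound on face degrees near the boundary — which follows because $K_n$ has bounded crossing-number-per-vertex locally, so a face touching $\partial G_n$ cannot have degree comparable to $c(K_n)$ without itself forcing $\partial G_n$ to be large — together with the linear growth estimate $\vol(B_i)=O(i)$ from \cite{Adams:bipyramids}. Once this boundary accounting is set up, everything else is the same limiting argument already used for determinant density in \cite{ck:det_mp}, and indeed one could alternatively phrase the whole proof as: $\volbp$ is a ``local'' additive functional of the diagram with linearly bounded local contributions, and any such functional has a density that is continuous under Følner convergence, so it passes to the limit by the same reasoning as $\log\det$.
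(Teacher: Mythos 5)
Your strategy is the same as the paper's: treat $\volbp$ as a local additive functional, split the bounded faces of $G(K_n)$ into those coming from complete fundamental-domain copies inside the F{\o}lner pieces $G_n$ and a remainder, bound the remainder using the linear estimate $\vol(B_i)=O(i)$ together with $4$--valence (so $\sum_i i\,b_i\leq 4c(K_n)$), and let the F{\o}lner conditions kill the error density. This is exactly the paper's decomposition into $b'_{n,i},b''_{n,i},b'''_{n,i}$, with $\vol(B_i)\leq i\,\vtet$ obtained by stellating each bipyramid. Your worry about needing a uniform bound on face degrees near the boundary is unnecessary: the face-corner count at $4$--valent vertices already controls faces of arbitrary degree.

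However, two of your quantitative steps are not justified by Definition~\ref{def:folner_converge} and would fail as stated. (1) You normalize by $n^2$, claiming $|G_n|\sim n^2\,|\mathrm{FD}|$ and $n^2c(L)/c(K_n)\to 1$; but condition~($iii$) only gives the containment $G_n\subset G(\LL)\cap(n\Lambda)$, so $G_n$ need not nearly fill the $n\Lambda$ region, and the number $c_n$ of complete copies of $\G(L)$ inside $G_n$ can be much smaller than $n^2$. The paper avoids this by working with $c_n$ itself, proving $c_n\,|G(L)|/|G_n|\to 1$ from the sandwich $c_n|G(L)|\leq |G_n|\leq c_n|G(L)|+|G(L)||\partial G_n|$ and condition~($ii$), and then using $|G_n|/c(K_n)\to 1$. (2) Your central estimate $\bigl|\volbp(K_n)-n^2\volbp(L)\bigr|=O(|\partial G_n|)$ is false in general: besides the faces meeting $\partial G_n$, there are bounded faces of $G(K_n)$ lying entirely outside $G_n$, and their number is controlled only by $|G(K_n)-G_n|$, which condition~($iv$) makes $o(c(K_n))$ but not $O(|\partial G_n|)$ (and if $c_n\ll n^2$ the discrepancy is even of order $n^2$). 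The paper handles these as the separate class $b'''_{n,i}$, bounded by $4\vtet\,(|G(K_n)-G_n|+|\partial G_n|)$, which vanishes after dividing by $c(K_n)$. With these two repairs --- replace $n^2$ by $c_n$ and add the outside-$G_n$ error term --- your argument becomes the paper's proof.
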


\begin{proof} 
  Let $b_{n,i}$ be the number of bounded $i$--faces of $G(K_n)$, for \mbox{$\displaystyle K_n\toF \LL$}.
  Then
$$ \frac{\volbp(K_n)}{c(K_n)} := \frac{\sum_i b_{n,i}\,\vol(B_i)}{c(K_n)}. $$

\begin{figure}
  \centering 
\includegraphics[height=2in]{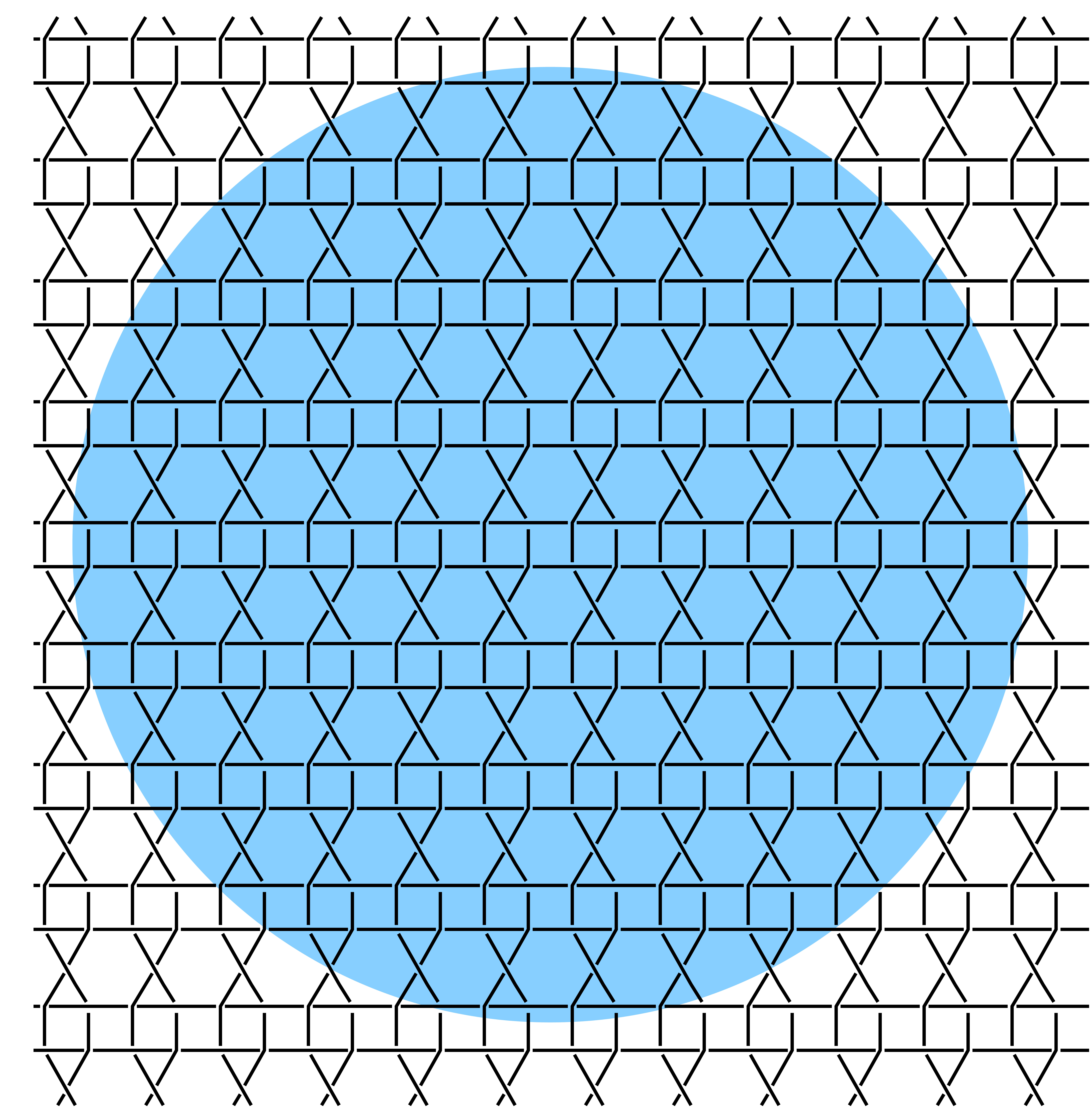}
\caption{Volume bound for F{\o}lner convergence of finite links $K_n$
  to a biperiodic link $\LL$: The part $G_n\subset G(\LL)$ is shown in
  the disc.  As $G_n\subset G(K_n)$, the bipyramid volume density of
  $K_n$ converges to that of $\LL$.}
\label{fig:vol-cvg}
\end{figure}

Let $G_n\subset G(K_n)$ be as in
Definition~\ref{def:folner_converge}.  Since $L=\LL/\Lambda$, the
projection graph $G(L)$ lifts to a $\Lambda$--fundamental domain graph $\G(L)$ for
$G(\LL)$.  We consider three mutually exclusive types of bounded faces of
$G(K_n)$ (see Figure~\ref{fig:vol-cvg}):
\begin{enumerate}
  \item Let $b'_{n,i}$ be the number of $i$--faces of all copies of $\G(L)$ entirely contained in $G_n$. 
  \item Let $b''_{n,i}$ be the number of $i$--faces of $G_n$ that are not counted in (1).  
  \item Let $b'''_{n,i}$ be the number of $i$--faces of $G(K_n)$ which are not in $G_n$.
\end{enumerate}
Note that $b'_{n,i}+b''_{n,i}$ is the number of $i$--faces of $G_n$, and $b'_{n,i}+b''_{n,i}+b'''_{n,i}=b_{n,i}$, the number of bounded $i$--faces of $G(K_n)$.

Now, suppose there are $c_n$ copies of $\G(L)$ entirely contained in $G_n$, so
if $b^L_i$ is the number of $i$--faces of $\G(L)$ then $b'_{n,i} =
c_n\, b^L_i$.
Moreover, we can bound the remaining faces of $G_n$, which are counted in item (2).
Every face of $G_n$ counted in $b''_{n,i}$ is in a copy of $\G(L)$ incident to $\partial G_n$, so that $b''_{n,i} \leq b^L_i\,|\partial G_n|$. 
Thus,
$$ c_n\,|G(L)|\leq |G_n|\leq c_n\,|G(L)|+|G(L)|\,|\partial G_n|.$$
By Definition~\ref{def:folner_converge}, $\displaystyle \frac{|\partial G_n|}{|G_n|}\to 0$, so that
$\displaystyle \frac{c_n\,|G(L)|}{|G_n|}\to 1$ as $n\to\infty$.
Therefore,
$$ \lim_{n\to\infty}\frac{\sum_ib'_{n,i}\,\vol(B_i)}{|G_n|} = \lim_{n\to\infty}\frac{\sum_i c_n\, b^L_i\,\vol(B_i)}{c_n\,|G(L)|} = \frac{\sum_i b^L_i\,\vol(B_i)}{|G(L)|}=\frac{\volbp(L)}{c(L)}. $$

By adding the central axis and stellating each bipyramid, every $B_i$
can be decomposed into $i$ tetrahedra (see~\cite[Figure
  15]{ckp:bal_vol}).  Since each tetrahedron contributes at most
$\vtet$ to the hyperbolic volume, $\vol(B_i)\leq i\,\vtet$.
Therefore, for every copy of $\G(L)$ which is only partially contained
in $G_n$,
$$ \sum\nolimits_i b''_{n,i}\,\vol(B_i) \leq \sum\nolimits_i i\,b^L_i\,\vtet\,|\partial G_n| \leq (4\,|G(L)|)\vtet\,|\partial G_n|.$$
For the last inequality, the $i\,b^L_i$ sum counts with multiplicity the vertices of all faces of $G(L)$, which is $4$--valent, so
the sum over all $i$ is bounded by four times the number of its vertices.

For the bounded $i$--faces of $G(K_n)$ which are not in $G_n$,
$$ \sum\nolimits_i b'''_{n,i}\,\vol(B_i) \leq \sum\nolimits_i i\,b'''_{n,i}\,\vtet \leq 4\vtet\,|G(K_n)-G_n| + 4\vtet\,|\partial G_n|.$$
The last inequality can be seen as follows: the $i\,b'''_{n,i}$ sum counts
with multiplicity the vertices of all bounded faces of $G(K_n)$
that are not in $G_n$.  Since $G(K_n)$ is 4-valent, the sum
over all $i$ is bounded by four times the number of vertices outside
$G_n$ and vertices of $\partial G_n$.

By Definition~\ref{def:folner_converge}, $\displaystyle \frac{|\partial G_n|}{|G_n|}\to 0$ and $\displaystyle \frac{|G_n|}{c(K_n)} \to 1$ as $n\to\infty$.
Thus, $\displaystyle \frac{|G(K_n)-G_n|}{c(K_n)}\to 0$, so 
\begin{eqnarray*}
\lim_{n\to\infty}\frac{\volbp(K_n)}{c(K_n)} &=& \lim_{n\to\infty}\frac{\sum_i (b'_{n,i}+b''_{n,i}+b'''_{n,i})\,\vol(B_i)}{c(K_n)} \\
&=& \lim_{n\to\infty}\frac{\sum_ib'_{n,i}\,\vol(B_i)}{|G_n|} +
{\rm O}\left(\frac{|\partial G_n|}{|G_n|}\right)\vtet +
{\rm O}\left(\frac{|G(K_n)-G_n|}{c(K_n)}\right)\vtet \\
&=& \frac{\volbp(L)}{c(L)}.
\end{eqnarray*}
\end{proof}


\begin{proof}[Proof of Theorem~\ref{Thm:voldet_ineq}]
Since $\vol(K) \leq \volbp(K)$,
$$ \frac{\vol(K_n)}{c(K_n)}\leq \frac{\volbp(K_n)}{c(K_n)}. $$
Hence, the hypothesis $\volbp(L)< 2\pi\,\m(p(z,w))$ and Theorem~\ref{thm:volbp-density} imply that
\begin{equation}\label{eq:rth2}
\lim_{n\to\infty}\frac{\vol(K_n)}{c(K_n)}\leq \lim_{n\to\infty}\frac{\volbp(K_n)}{c(K_n)} = \frac{\volbp(L)}{c(L)} < \frac{2\pi\,\m(p(z,w))}{c(L)}.
\end{equation}

By Theorem~\ref{Thm:det_mp}, \mbox{$\displaystyle K_n\toF \LL$} implies
$$ \lim_{n\to\infty}\frac{\log\det(K_n)}{c(K_n)} = \frac{\m(p(z,w))}{c(L)}.$$
Therefore,
$$ \lim_{n\to\infty}\frac{\vol(K_n)}{c(K_n)} < \lim_{n\to\infty}\frac{2\pi\,\log\det(K_n)}{c(K_n)},$$
which proves the claim. 
\end{proof}

\begin{remark}
The proof above fails without the hypothesis $\volbp(L)< 2\pi\,\m(p(z,w))$, when
$\displaystyle \lim_{n\to \infty} \frac{\vol(K_n)}{c(K_n)}=\lim_{n\to \infty} \frac{2 \pi \log \det(K_n)}{c(K_n)}$.
This happens in only two cases that we know of: the square weave $\W$ and the triaxial link $\LL$ discussed
in Section~\ref{sec:right-angled}.  Nevertheless, we checked
numerically for weaving knots $\displaystyle K_n \toF \W$ (see~\cite{ckp:weaving}) with hundreds of crossings that the Vol-Det Conjecture does hold.
\end{remark}

\subsection{Bound on volume change under augmentation}

In \cite{ckp:density}, it was shown that the Vol-Det Conjecture implies the
following conjecture, which would be a new upper bound for how much
the volume can change after drilling out an augmented unknot:
\begin{conjecture}[\cite{ckp:density}]\label{conj:voldet2}
For any hyperbolic alternating link $K$ with an augmented unknot $B$
around any two parallel strands of $K$, 
\[ \vol(K) < \vol(K\cup B) \leq 2\pi\log\det(K). \]
\end{conjecture}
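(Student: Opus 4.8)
The plan is to prove the two inequalities separately: the left one by hyperbolic Dehn surgery, and the right one by combining the Vol-Det Conjecture with a Kauffman-bracket computation of the determinant. For the left inequality, I would note that $S^3\setminus K$ is recovered from $S^3\setminus(K\cup B)$ by the meridional Dehn filling of the cusp corresponding to $B$, which caps off the component $B$. Since $K$ is hyperbolic by hypothesis and $K\cup B$ is hyperbolic as well (the augmentation circle around two parallel strands is essential, so $S^3\setminus(K\cup B)$ carries a complete finite-volume hyperbolic structure by the work of Adams and Purcell), Thurston's hyperbolic Dehn surgery theorem applies: filling a cusp of a cusped hyperbolic manifold strictly decreases the volume. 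Hence
\[
\vol(K)=\vol(S^3\setminus K)<\vol(S^3\setminus(K\cup B))=\vol(K\cup B).
\]

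For the right inequality, I would introduce the family $J_n$ obtained from $K$ by inserting $n$ full twists into the two parallel strands encircled by $B$; equivalently $J_n$ is the result of $1/n$ Dehn surgery on $B$. Because the two strands are coherently oriented and $K$ is alternating and reduced, the twist region can be inserted so that each $J_n$ is again a reduced alternating diagram, hence hyperbolic for large $n$ by Menasco. As $n\to\infty$ the surgery coefficients tend to infinity, so the filling slopes on the $B$-cusp have length tending to infinity, and by the Neumann--Zagier asymptotics the volumes increase monotonically to the cusped volume, $\vol(J_n)\nearrow\vol(K\cup B)$. Applying the Vol-Det Conjecture (Conjecture~\ref{conj:voldet}) to each alternating hyperbolic link $J_n$ gives $\vol(J_n)<2\pi\log\det(J_n)$.

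The crux is then the determinant identity $\det(J_n)=\det(K)$ for all $n$. I would establish this by evaluating the Kauffman bracket at $A=e^{i\pi/4}$, where the loop value $\delta=-A^2-A^{-2}$ vanishes, so that in the Temperley--Lieb algebra $TL_2$ the cap--cup generator $e$ satisfies $e^2=\delta e=0$. A single crossing acts as $A\cdot\mathrm{id}+A^{-1}e$, so a full twist acts as $A^2\,\mathrm{id}+2e$, and $n$ full twists act as $i^{\,n}\,\mathrm{id}+2n\,i^{\,n-1}e$. Closing up the two strands expresses the determinant as $\det(J_n)=\bigl|\,i\langle K\rangle_{\parallel}+2n\langle K\rangle_{\asymp}\,\bigr|$, where $\langle K\rangle_{\parallel}$ is proportional to $\det(K)$ (the $\parallel$-smoothing of the twist region returns $K$) and $\langle K\rangle_{\asymp}$ is proportional to $\det(K_{\asymp})$ for the link $K_{\asymp}$ obtained by the complementary ($\asymp$) smoothing. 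The role of the ``two parallel strands'' hypothesis is to force $K_{\asymp}$ to have vanishing determinant (for instance, because capping the two coherently oriented strands splits off a trivial component), so the $n$-dependent term drops out and $\det(J_n)=\det(K)$ is independent of $n$. Granting this, $\vol(J_n)<2\pi\log\det(J_n)=2\pi\log\det(K)$ for every $n$, and letting $n\to\infty$ yields $\vol(K\cup B)=\lim_n\vol(J_n)\le 2\pi\log\det(K)$, which combined with the left inequality gives the full statement.

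The main obstacle is precisely this determinant identity, i.e. verifying that the complementary smoothing $K_{\asymp}$ has determinant zero. This is where the exact meaning of ``augmented unknot around two parallel strands'' must be exploited: one must rule out the generic behaviour of a twist region in an alternating diagram, where $\det(J_n)$ grows linearly in $n$ (as for rational tangles), and instead confirm that the $\asymp$-smoothing separates off a determinant-zero piece. A secondary technical point is to check that the $J_n$ remain prime, reduced, alternating and non-torus, so that Menasco's hyperbolicity criterion and the Vol-Det Conjecture both genuinely apply; this should follow from reducedness of the alternating diagram of $K$ together with the coherence of the inserted twists. If the determinant identity should fail for some configuration, that failure would pinpoint exactly which augmentations the hypothesis of Conjecture~\ref{conj:voldet2} is implicitly restricting.
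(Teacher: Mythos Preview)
The statement is a conjecture; the paper does not prove it, but cites \cite{ckp:density} for the fact that it follows from the Vol-Det Conjecture, and then proves only the special case in Corollary~\ref{cor:augmented}. The argument the paper invokes from \cite{ckp:density} is quite different from yours: rather than twisting, one forms the $m$--periodic alternating link $K^m$ obtained by concatenating $m$ copies of the tangle that $B$ cuts off (see Figure~\ref{fig:augmented}), and uses the two density limits
\[
\lim_{m\to\infty}\frac{\vol(K^m)}{c(K^m)}=\frac{\vol(K\cup B)}{c(K)},
\qquad
\lim_{m\to\infty}\frac{2\pi\log\det(K^m)}{c(K^m)}=\frac{2\pi\log\det(K)}{c(K)}.
\]
Applying the Vol-Det Conjecture to each $K^m$, dividing by $c(K^m)$, and passing to the limit gives $\vol(K\cup B)\le 2\pi\log\det(K)$. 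No identity of the form $\det(J_n)=\det(K)$ is ever needed; only ratios of $\log\det$ to crossing number are controlled.

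Your approach breaks at exactly the point you flag. The hypothesis ``two parallel strands'' in Conjecture~\ref{conj:voldet2} refers to any pair of adjacent strands that $B$ encircles, with no orientation condition, so your restriction to coherently oriented strands is already a loss of generality. More seriously, the identity $\det(J_n)=\det(K)$ is false: inserting full twists in a twist region of a reduced alternating diagram changes the number of spanning trees of the Tait graph, hence the determinant, generically linearly in $n$ (two-bridge links and twist knots are the basic examples). Your proposed mechanism---that the $\asymp$--smoothing $K_\asymp$ splits off an unlinked component and hence has determinant zero---does not follow from coherence of the two strands; whether the cap--cup smoothing disconnects the diagram depends on the global combinatorics of $K$, and in general it does not. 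Once $\det(J_n)$ grows with $n$, the inequality $\vol(J_n)<2\pi\log\det(J_n)$ from Vol-Det no longer bounds $\vol(K\cup B)$ by $2\pi\log\det(K)$ in the limit. The periodic-cover route avoids this entirely by working with densities rather than with determinants themselves.
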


In this section, we prove Conjecture~\ref{conj:voldet2} for infinite families of knots or links that include
almost all $K_n$ for every sequence \mbox{$\displaystyle K_n\toF \LL$} as in Theorem~\ref{Thm:voldet_ineq}.

\begin{corollary}\label{cor:augmented}
Let $K_n \toF \LL$ be links satisfying the conditions of Theorem~\ref{Thm:voldet_ineq}.
Then for almost all $n$,
\[ \vol(K_n) < \vol(K_n\cup B) < 2\pi\log\det(K_n). \]
\end{corollary}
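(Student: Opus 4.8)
\textbf{Proof proposal for Corollary~\ref{cor:augmented}.}

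The plan is to combine Theorem~\ref{Thm:voldet_ineq} with a known comparison between the volume of an augmented link and the volume of its parent link, together with standard facts about augmented alternating links. First I would recall that, by Theorem~\ref{Thm:voldet_ineq}, the hypothesis $\volbp(L) < 2\pi\,\m(p(z,w))$ already gives $\vol(K_n) < 2\pi\log\det(K_n)$ for almost all $n$; so the only genuinely new content is the middle inequality $\vol(K_n) < \vol(K_n\cup B) < 2\pi\log\det(K_n)$, and in fact the right-hand piece will follow once we control $\vol(K_n\cup B)$ in terms of $c(K_n)$ and use Theorem~\ref{Thm:det_mp}.

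The first key step is the left inequality $\vol(K_n) < \vol(K_n\cup B)$. This is a general fact: $S^3 \setminus (K_n\cup B)$ is obtained from $S^3 \setminus K_n$ by drilling out the unknotted circle $B$, and drilling out a geodesic (or more carefully, an essential simple closed curve that is isotopic to a geodesic in the hyperbolic structure, which holds here since $K_n$ is alternating and $B$ encircles two parallel strands) strictly increases hyperbolic volume by Thurston's Dehn surgery theorem / the work of Neumann--Zagier; alternatively one invokes the standard statement that volume strictly decreases under Dehn filling. I would cite the relevant result (as in \cite{ckp:density}) rather than reprove it. The second key step is an upper bound on $\vol(K_n\cup B)$. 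Adding a single augmentation circle changes the link by a bounded amount relative to the crossing number: either one uses that $K_n\cup B$ is itself an augmented alternating link whose volume is controlled linearly in $c(K_n)$ with the \emph{same} density constant as $K_n$ in the limit — because a single extra circle contributes $o(c(K_n))$ — or more simply, one bounds $\vol(K_n\cup B) \le \vol(K_n) + 2\voct$ (a crossing circle contributes at most the volume of the Whitehead link complement's worth, i.e.\ a uniformly bounded additive term, this being the standard bound for belted/augmented links). Either way, $\lim_{n\to\infty} \vol(K_n\cup B)/c(K_n) = \lim_{n\to\infty}\vol(K_n)/c(K_n)$.

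With these two ingredients in hand, the argument closes exactly as in the proof of Theorem~\ref{Thm:voldet_ineq}. Dividing by $c(K_n)$ and taking the limit, the bound from step two together with \eqref{eq:rth2} gives
\[
\lim_{n\to\infty}\frac{\vol(K_n\cup B)}{c(K_n)} = \lim_{n\to\infty}\frac{\vol(K_n)}{c(K_n)} \le \frac{\volbp(L)}{c(L)} < \frac{2\pi\,\m(p(z,w))}{c(L)} = \lim_{n\to\infty}\frac{2\pi\log\det(K_n)}{c(K_n)},
\]
where the last equality is Theorem~\ref{Thm:det_mp}. Since the limit of $\vol(K_n\cup B)/c(K_n)$ is strictly less than the limit of $2\pi\log\det(K_n)/c(K_n)$, we get $\vol(K_n\cup B) < 2\pi\log\det(K_n)$ for almost all $n$; combined with the strict inequality $\vol(K_n) < \vol(K_n\cup B)$ from step one, this yields the chain $\vol(K_n) < \vol(K_n\cup B) < 2\pi\log\det(K_n)$ for almost all $n$, as required. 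The main obstacle is step two: making precise that augmenting by one circle perturbs the volume by a sublinear (indeed bounded) amount, and pinning down exactly which cited statement gives the uniform additive bound $\vol(K_n\cup B) - \vol(K_n) = O(1)$; everything else is a routine repackaging of the density arguments already used above.
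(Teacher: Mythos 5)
Your step (1) and your closing density comparison coincide with the paper's argument, but the middle step — where you control $\vol(K_n\cup B)$ — contains a genuine gap. You assert a uniform additive bound $\vol(K_n\cup B)\le \vol(K_n)+2\voct$ ("a crossing circle contributes at most a uniformly bounded additive term, this being the standard bound for belted/augmented links"), or alternatively that the drilled circle contributes $o(c(K_n))$ to the volume. No such standard result is available: drilling the augmentation circle $B$ is a Dehn drilling along a geodesic whose length in $S^3\setminus K_n$ is not uniformly controlled, and effective drilling estimates only bound the volume increase when that geodesic is short. Indeed, the absence of any good a priori bound on $\vol(K\cup B)-\vol(K)$ is precisely what makes Conjecture~\ref{conj:voldet2} interesting ("a new upper bound for how much the volume can change after drilling out an augmented unknot"); if your claimed $2\voct$ bound were standard, that conjecture would largely lose its point. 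So as written, your step (2) is an unproven assertion that carries the whole weight of the corollary — you correctly flag it as "the main obstacle," but it is not a routine citation.

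The paper avoids this entirely by bounding $\vol(K_n\cup B)/c(K_n)$ through the bipyramid density rather than through $\vol(K_n)$ itself: by \cite{ckp:density}, for any augmented alternating link $K\cup B$ one has $\vol(K\cup B)/c(K)=\lim_{m\to\infty}\vol(K^m)/c(K^m)$, where $K^m$ is the $m$-periodic alternating link built from $m$ copies of a tangle containing $K$; combining this with Theorem~\ref{thm:volbp-density} (applied via a double $\limsup$ in $m$ and $n$) gives $\vol(K_n\cup B)/c(K_n)\le \volbp(L)/c(L)$, and then inequality~\eqref{eq:rth2} and Theorem~\ref{Thm:det_mp} yield $\vol(K_n\cup B)<2\pi\log\det(K_n)$ for almost all $n$. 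Note also a smaller inaccuracy in your write-up: you use $\lim_{n\to\infty}\vol(K_n)/c(K_n)$ as if it exists, whereas convergence of volume densities to that of $\LL$ is only conjectural (see \cite[Conjecture 6.5]{ckp:bal_vol}); the paper is careful to work with $\limsup$ and the bipyramid bound $\vol\le\volbp$ instead. If you replace your step (2) by the periodic-link argument from \cite{ckp:density} (or prove your additive bound, which would be a new theorem), the rest of your proposal goes through.
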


\begin{proof}
Since volume increases under Dehn drilling, $\vol(K_n) < \vol(K_n\cup B)$.  
Although we do not know that volume densities of $K_n$ converge to that of $\LL$ (see \cite[Conjecture 6.5]{ckp:bal_vol}), 
Theorem~\ref{thm:volbp-density}
implies $\displaystyle \limsup_{n\to \infty} \frac{\vol(K_n)}{c(K_n)} \leq \frac{\volbp(L)}{c(L)}$.

\begin{figure}
  \centering 
\hspace*{0.7in} \includegraphics[scale=0.25]{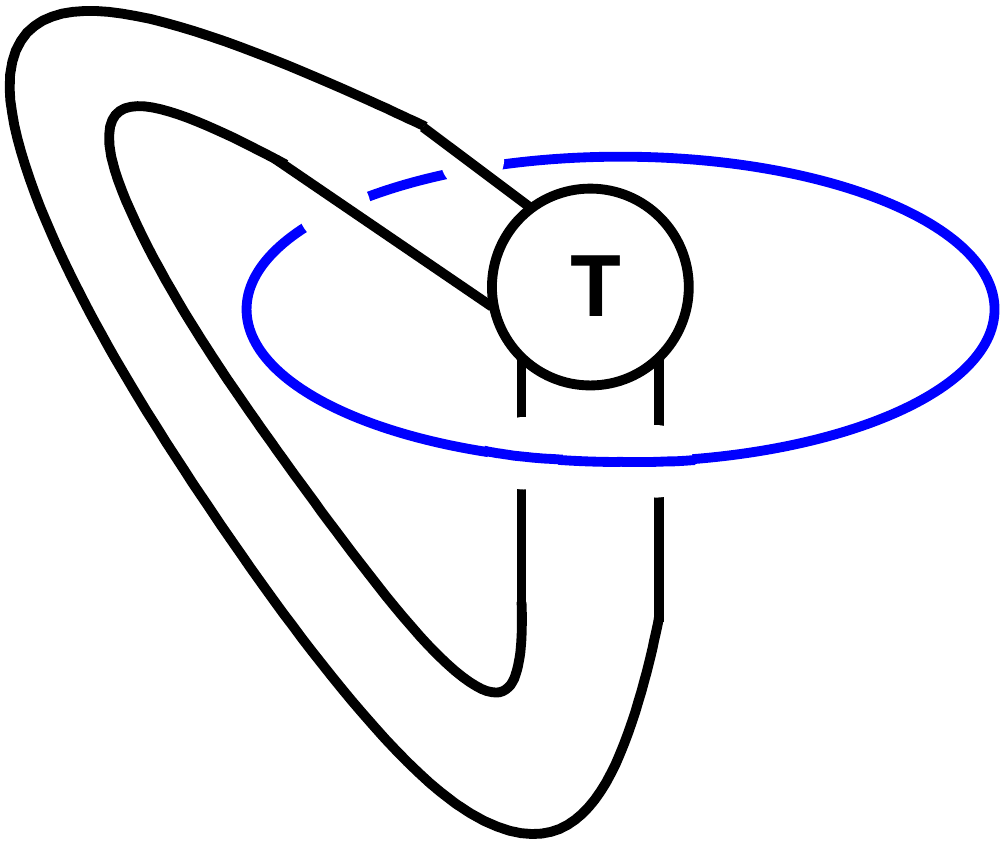}\qquad
\hspace*{0.5in} \includegraphics[scale=0.25]{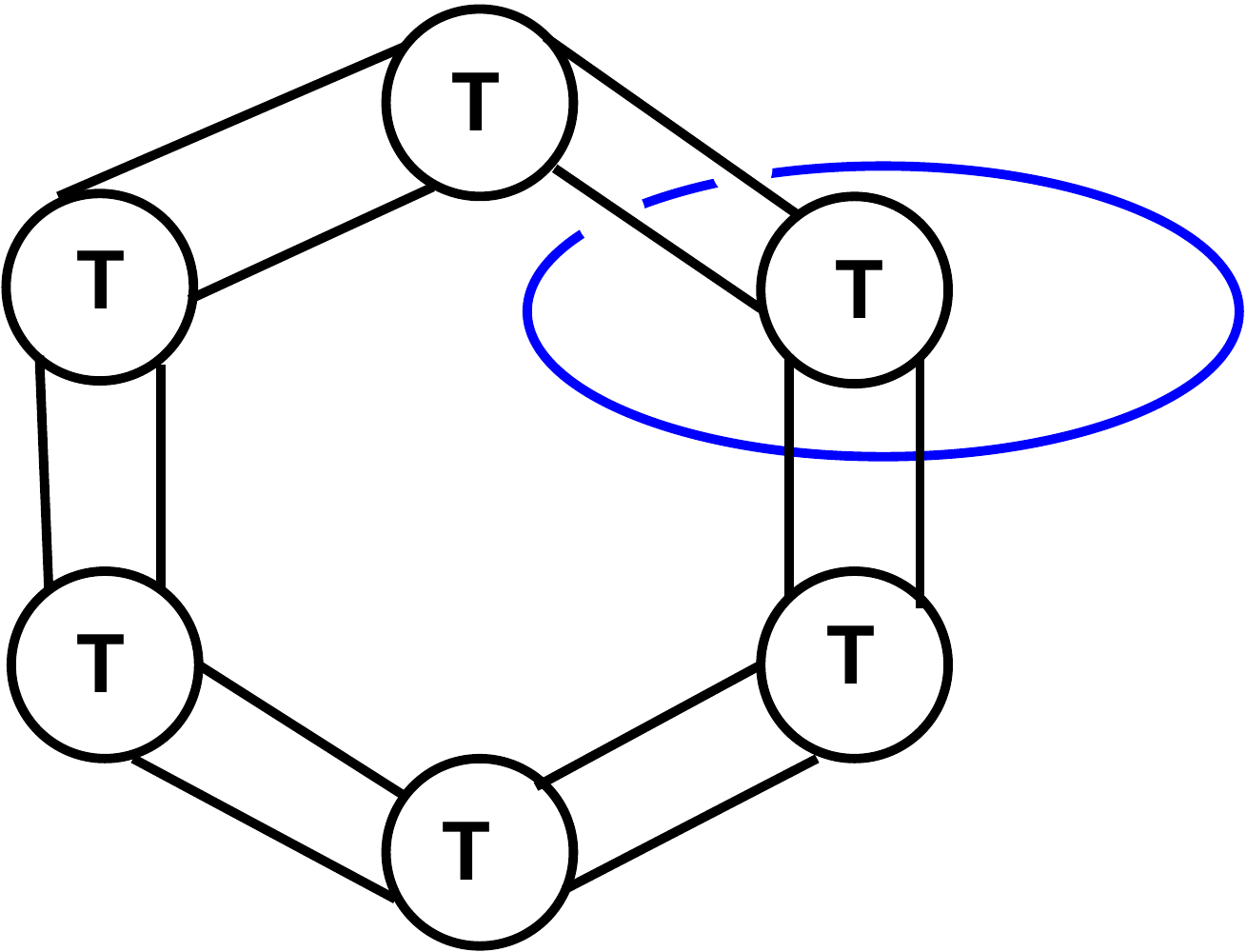}
  \caption{Augmented hyperbolic alternating links $K\cup B$ and $K^m\cup B$}
  \label{fig:augmented}
\end{figure}

Let $K \cup B$ be any augmented alternating link, and let $K^m$ denote
the $m$--periodic alternating link with quotient $K$, formed by taking
$m$ copies of a tangle $T$ as in Figure~\ref{fig:augmented}.  It was shown in \cite{ckp:density} that 
$$ \lim_{m\to\infty}\frac{\vol(K^m)}{c(K^m)}= \frac{ \vol(K\cup B)}{c(K)}, 
\text{\ and \ }
 \lim_{m\to\infty}\frac{2\pi\log\det(K^m)}{c(K^m)} = \frac{2\pi\log\det(K)}{c(K)}.$$

Thus, for all $m$, $\displaystyle\limsup_{n\to \infty} \frac{\vol(K_n^m)}{c(K_n^m)} \leq \frac{\volbp(L)}{c(L)} + \epsilon(m)$, such that $\lim_{m\to\infty}\epsilon(m)=0$.
It follows that $\displaystyle \limsup_{m,n\to \infty} \frac{\vol(K_n^m)}{c(K_n^m)} \leq \frac{\volbp(L)}{c(L)}$.
Therefore,
$$ \frac{ \vol(K_n\cup B)}{c(K_n)}=\lim_{m\to \infty} \frac{\vol(K_n^m)}{c(K_n^m)} \leq \limsup_{m,n\to \infty} \frac{\vol(K_n^m)}{c(K_n^m)} \leq \frac{\volbp(L)}{c(L)} < \frac{2\pi\log\det(K_n)}{c(K_n)}$$ 
for almost all $n$, where the final inequality follows by inequality~\eqref{eq:rth2} and Theorem~\ref{Thm:det_mp}.
\end{proof}


\section{Proven examples for Conjecture \ref{conj:volbipy}}
\label{sec:examples}

To review notation, recall that $\vtet$ is the volume of the regular
ideal tetrahedron, and $\voct$ is the volume of the regular ideal octahedron:
\[\vtet=D\left(e^{i\pi/3}\right)=D\left(\frac{1+\sqrt{3}i}{2}\right) \approx 1.01494, \quad \voct= 4 D\left(e^{i\pi/2}\right)=4D(i)\approx 3.66386.\]

\subsection{Square weave}
Our first example is the square weave $\W$, as shown in Figure \ref{fig-square}, which was discussed in Example 1 of Section~\ref{sec:background}.
Let $W$ be its alternating quotient link in $T^2 \times I$ as in Section~\ref{sec:background}.
By equation~(\ref{eqn:charpol-weave}),
\[ p_{W}(z,w)=-\left(4+w+\frac{1}{w}+z+\frac{1}{z}\right). \]

In \cite{Boyd:special}, Boyd gives the main idea how to prove a
formula for the Mahler measure of $p_{W}(z,w)$. Below we provide the
missing details, including the dilogarithm evaluation using formula~\eqref{eq:sum}.

\begin{theorem}
\label{thm:mm-weave}
$$ 2\pi\, \m(p_{W}) = 8 D(i) = 2 \voct .$$
Consequently, $\vol((T^2\times I) - W) = \volbp(W) = 2\pi \,\m(p_{W})$.
\end{theorem}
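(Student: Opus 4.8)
The plan is to apply the Boyd--Rodriguez-Villegas machinery from Section~\ref{sec:background} to $P(x,y)$ obtained from $p_W$ after a substitution that makes the curve rational. First I would set $x=z$, $y=w$ and clear denominators: writing $p_W$ as a polynomial in $w$, we get a quadratic $w^2 + (4+z+1/z)w + 1 = 0$ (up to a monomial factor $-1/w$ that does not affect Mahler measure). Thus $P^*(x)$ is a monomial, so $\m(P^*)=0$, and $P$ has degree $d=2$ in $y=w$ with roots $w_1(z), w_2(z)$ satisfying $w_1 w_2 = 1$. This last relation is the crucial simplification: on $|z|=1$, exactly one of $|w_j(z)|\ge 1$ (generically), and $\eta(z,w_1)=-\eta(z,w_2)$ since $w_1=1/w_2$ and $\eta(z,1/w)=-\eta(z,w)$ in $(\C(X)^*\wedge\C(X)^*)_\Q$. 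So formula~\eqref{eq:sum} collapses to a single integral of $\eta(z,w_1)$ over a subarc of the unit circle.

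Next I would parametrize the genus-zero curve. From $w + 1/w = -(4+z+1/z)$, a natural parametrization is to solve the quadratic: $w(z) = \tfrac{1}{2}\bigl(-(4+z+1/z) \pm \sqrt{(4+z+1/z)^2 - 4}\bigr)$, but for the wedge computation it is cleaner to introduce a uniformizing parameter $t$ rationalizing the square root. One then expresses $z\wedge w$ in $(\C(t)^*\wedge\C(t)^*)_\Q$ and applies Lemma~\ref{lemma} repeatedly to rewrite it as a $\Q$-combination $\sum_k \alpha_k\, z_k\wedge(1-z_k)$, possibly plus leftover terms $c\wedge z_k$ with $c$ constant. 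The terms $c\wedge z_k$ integrate directly via $\eta(c,z)$, and when $c$ is a root of unity they vanish; the main $z_k\wedge(1-z_k)$ terms integrate to $\sum_k \alpha_k D(z_k)$ evaluated at the endpoints $\partial\{|z|=1, |w(z)|\ge 1\}$ by~\eqref{eq:sum}.

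The endpoints of the integration arc are where $|w_1(z)|=|w_2(z)|=1$, i.e. where the discriminant $(4+z+1/z)^2-4$ has the right sign; writing $z=e^{i\theta}$, $4+z+1/z = 4+2\cos\theta$, so the crossover is at $4+2\cos\theta = \pm 2$, giving $\cos\theta = -1$ (a degenerate point) and $\cos\theta = -3$ (no real solution) — so in fact $|w_1|\ne|w_2|$ on the whole circle except $z=-1$, and $|w_1(z)|> 1$ on all of $|z|=1\setminus\{-1\}$. Hence the "boundary" consists only of $z=-1$, and I expect the dilogarithm evaluation to localize there, producing a value proportional to $D(i)$ after using the special-value identities~\eqref{eq:inverse}--\eqref{eq:unitary}. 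I would then match the constant: $2\pi\,\m(p_W) = 8D(i) = 2\voct$. Finally, the last sentence follows for free: by~\cite{ckp:bal_vol} the fundamental domain of $W$ has two squares (the faces of $W$), so $\volbp(W) = 2\vol(B_4) = 2\voct$, and since $W$ is a semi-regular link, equality holds in~\eqref{eq:volbp}, giving $\vol((T^2\times I)-W) = \volbp(W) = 2\pi\,\m(p_W)$.

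The main obstacle I anticipate is organizing the wedge-algebra computation so that \emph{all} residual $c\wedge z$ terms either vanish (roots of unity) or integrate to something expressible via $D(i)$; getting the bookkeeping of branches of $\sqrt{\cdot}$ and $\log|w(z)|$ correct on the arc, and correctly identifying which endpoint contributions survive, is the delicate part. A secondary subtlety is justifying the collapse $\eta(z,w_1)+\eta(z,w_2)=0$ rigorously as a form identity (not just up to torsion) so that the single-integral reduction is valid before passing to dilogarithms.
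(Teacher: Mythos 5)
Your strategy is sound, but it follows a genuinely different route from the paper's proof of Theorem~\ref{thm:mm-weave}. The paper uses Boyd's factorization: after the measure-preserving monomial substitution $(z,w)\mapsto(z/w,zw)$, the polynomial becomes $\frac{1}{wz}(1+iw+iz+wz)(1-iw-iz+wz)$, and each factor is \emph{linear} in $z$, so the wedge computation is one line, $w\wedge z = iw\wedge(1+iw)-iw\wedge(1-iw)$, with boundary points $w=\pm 1$, giving $4D(i)$ per factor and hence $8D(i)$ in total. You instead keep the quadratic $zw^2+(z^2+4z+1)w+z$ and parametrize its genus-zero zero locus, which is the method the paper reserves for the triaxial, $\CC_0$, $\CC_1$ and 8-8-4 links; your route does work, and can be completed as follows. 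The curve has a node at $(z,w)=(-1,-1)$, and lines through the node give the parametrization $z=\frac{1-t}{t(t+1)}$, $w=\frac{t(t-1)}{t+1}$; then, modulo torsion, $w\wedge z = 2\,t\wedge(1-t)-2\,(-t)\wedge(1+t)$, so $\eta(w,z)=2\,dD(t)-2\,dD(-t)$, and since the preimages of the node are $t=\pm i$, the endpoint evaluation in \eqref{eq:sum} gives $\bigl|2D(i)-2D(-i)-\bigl(2D(-i)-2D(i)\bigr)\bigr|=8D(i)$, with $\m(P^*)=\m(z)=0$, as required. Two points in your write-up need tightening. First, the reduction to a single root is not the cancellation $\eta(z,w_1)+\eta(z,w_2)=0$: since $|w_2|\leq 1$ on all of $|z|=1$, the set $\{|z|=1,\ |w_2|\geq 1\}$ is a single point and the $j=2$ term simply contributes nothing. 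Second, the ``boundary'' is not the single point $z=-1$ but the \emph{two distinct points of the normalization} lying over the node (the parameter values $t=\pm i$); had the endpoints coincided on the curve, the dilogarithm contributions would cancel and the method would return $0$, so identifying these two preimages is exactly the step that produces $8D(i)$. What your approach buys is uniformity with the paper's other computations and no need to guess Boyd's special factorization; what the paper's approach buys is brevity, since the factorization reduces everything to a M\"obius-parametrized line and avoids the nodal-curve bookkeeping. The concluding sentence of the theorem is handled the same way in both arguments, by citing \cite[Theorem 3.5]{ckp:bal_vol} for $\vol((T^2\times I)-W)=\volbp(W)=2\voct$.
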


\begin{proof}
Consider the factorization due to Boyd \cite{Boyd:special}:
\begin{align*}
q(z,w)=-p_{W}(z/w,wz)=&4+\left(wz+\frac{1}{wz}+\frac{z}{w}+\frac{w}{z}\right)\\
=&\frac{1}{wz}(1+iw+iz+wz)(1-iw-iz+wz).
\end{align*}
Note that $\m(1-iw-iz+wz)=\m(1+iw+iz+wz)$ since one is obtained from
the other by $z \to -z$ and $w \to -w$,
which does not alter the Mahler measure. Hence $\m(p_{W})=2\m(q_1)$ where
$q_1(z,w)=1+iw+iz+wz$.

Let us compute  $\m(q_1)$. Setting
$w=e^{i\theta}$ we get 
\[|z|=\left|\frac{1+iw}{w+i}\right|=\left|\frac{1+iw}{1-iw}\right|=\left|\frac{1+e^{i(\theta+\pi/2)}}{1-e^{i(\theta+\pi/2)}}\right|=\left|\cot\left(\frac{2\theta+\pi}{4}\right)\right|.\]
Then $|z|\geq 1$ iff $-\pi\leq \theta \leq 0$. Therefore we have to integrate between $w=-1$ and $w=1$.
The wedge product can be decomposed as 
$$ w\wedge z = w\wedge \frac{1+iw}{i+w}=w \wedge \frac{1+iw}{1-iw} = iw \wedge (1+iw) -iw\wedge (1-iw).$$
Applying~\eqref{eq:sum}, we evaluate $-\frac{1}{2\pi}(D(-iw)-D(iw))$
on the boundary $w\rvert_{\text{\tiny -1}}^{\text{\tiny 1}}$ to obtain
\[2\pi \m(q_1) =-D(-i\cdot 1)+D(i\cdot 1)+D(-i\cdot (-1))-D(i\cdot (-1))=4 D(i).\]

Thus, we obtain the first claim:
\[2\pi \m(p_{W})= 4\pi \m(q_1)=8 D(i)=2\voct.\]

By \cite[Theorem 3.5]{ckp:bal_vol},
$$ \vol((T^2\times I) -W) = \volbp(W) = 2\voct.$$
Thus, Conjecture~\ref{conj:volbipy} is verified for the square weave $\W$ with an equality.
\end{proof}

\subsection{Triaxial link } \label{sec:right-angled}
Next, we consider the triaxial link $\LL$ as shown in Figure~\ref{fig:triaxial}, which was discussed in Example 2 of Section~\ref{sec:background}.
Let $L$ be its alternating quotient link in $T^2 \times I$ as in Section~\ref{sec:background}.
By equation~(\ref{eqn:charpol-triaxial}),
\[ p_{L}(z,w)=6-\left(w+\frac{1}{w}+z+\frac{1}{z} +\frac{w}{z}+\frac{z}{w}\right). \]

In \cite{Boyd:special}, Boyd mentions without giving the proof that
the Mahler measure of $p_{L}(z,w)$ can be found by using equation
\eqref{eq:sum}.  Below we provide the proof.

\begin{theorem}
\label{thm:mm-triaxial}
$$ 2\pi\, \m(p_{L}) = 10 D\left(\frac{1+\sqrt{3}}{2}\right)  = 10 \vtet .$$
Consequently, $\vol((T^2\times I) - L) = \volbp(L) = 2\pi \,\m(p_{L})$.
\end{theorem}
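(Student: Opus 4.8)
The plan is to follow the Boyd--Rodriguez-Villegas method outlined in Section~\ref{sec:background}, exactly as was done for the square weave, but now the genus-0 parametrization of the defining curve will be slightly more involved. First I would fix the variable of integration: writing $p_L(z,w) = 6 - (w + 1/w + z + 1/z + w/z + z/w)$, I regard this as a polynomial in $z$ of degree $2$ (after clearing the denominator $z$), with leading coefficient $P^*(w)$ a Laurent polynomial in $w$ alone. Since $p_L$ is reciprocal and symmetric, I expect $\m(P^*)=0$ (it should be a monomial times a cyclotomic-type factor, or simply a unit), so that $2\pi\m(p_L)$ equals $-\sum_j \int_{|w|=1,\,|z_j(w)|\ge 1}\eta(w,z_j)$ directly. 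The two roots $z_1(w),z_2(w)$ satisfy $z_1 z_2 = $ (ratio of constant to leading term) and $z_1+z_2 = $ (linear term), and because $p_L$ is reciprocal in $z$ one has $z_1 z_2 = 1$, so on $|w|=1$ exactly one root has modulus $\ge 1$ except on the boundary locus where $|z_1|=|z_2|=1$; that boundary locus cuts the circle $|w|=1$ into arcs and supplies the endpoints at which $D$ gets evaluated.

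Next I would produce the wedge decomposition. Setting $w = e^{i\theta}$ and solving the quadratic for $z$, I would look for a factorization of (a rescaled) $p_L$ analogous to Boyd's trick $q(z,w) = -p_W(z/w, wz)$ in the weave case: try a monomial substitution such as $z \mapsto z/w$ (or $z \mapsto zw$) to turn $p_L$ into something that factors as a product of two linear-in-$z$ pieces over $\C(w)$, each of the shape $(\alpha z + \beta)$ with $\alpha,\beta \in \Q(\sqrt{3})[w^{\pm 1}]$. The appearance of $\vtet = D((1+\sqrt3 i)/2) = D(e^{i\pi/3})$ in the answer strongly signals that $1/2 \pm \sqrt{3}/2 \cdot i$, i.e. sixth roots of unity, will show up as the arguments of $D$ at the endpoints, just as $\pm i$ (fourth roots of unity) appeared for the weave. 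Once each factor is linear in $z$, the wedge $w \wedge z_j$ expands as a sum of terms $c \wedge (\text{linear in } w)$ with $c$ a constant (a root of unity up to sign, whose $\eta$-contribution vanishes) plus terms of the form $(aw+b)\wedge(cw+d)$; here Lemma~\ref{lemma} is applied with $t = w$ to rewrite each such wedge in the Bloch form $\sum \alpha_k (z_k \wedge (1-z_k))$ plus harmless constant$\,\wedge\,$function terms. Then equation~\eqref{eq:sum} converts everything into a finite $\Z[\tfrac12]$-combination of $D$ evaluated at algebraic numbers at the arc endpoints on $|w|=1$.

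The final step is the dilogarithm bookkeeping. I would collect all the $D(\zeta)$ terms coming from the endpoints, use the functional equations recorded in the excerpt --- inversion and reflection \eqref{eq:inverse}, conjugation \eqref{eq:conjugate} (which kills all terms with real or real-on-the-nose arguments), the squaring relation \eqref{eq:square}, and if needed the five-term relation \eqref{eq:five-term} --- to reduce the whole sum to a rational multiple of $D(e^{i\pi/3})$. Matching the coefficient should give exactly $10\,\vtet$, i.e. $2\pi\m(p_L) = 10 D\!\left(\tfrac{1+\sqrt3}{2}\right)$ (note $D(e^{i\pi/3}) = D\!\left(\tfrac{1+\sqrt3 i}{2}\right)$, and the stated $D\!\left(\tfrac{1+\sqrt3}{2}\right)$ in the theorem must be read with the implicit $i$, consistent with the review of notation at the start of Section~\ref{sec:examples}). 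The ``Consequently'' clause is then immediate: by \cite[Theorem~3.5]{ckp:bal_vol} the triaxial link is semi-regular, so $\vol((T^2\times I)-L) = \volbp(L)$, and $\volbp(L) = 10\vtet$ was computed from the face data (the trihexagonal tiling has, per fundamental domain, faces giving $\sum \vol(B_{|f|}) = 10\vtet$), so all three quantities coincide and Conjecture~\ref{conj:volbipy} holds with equality.

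The main obstacle I anticipate is finding the right monomial change of variables that makes $p_L$ split into $\C(w)$-linear factors in $z$ with coefficients in $\Q(\sqrt3)$; without a clean factorization the wedge $w\wedge z$ involves the quadratic's splitting field and the integral does not obviously reduce via Lemma~\ref{lemma}. A secondary subtlety is correctly identifying the integration arcs --- i.e. precisely which $w$ on the unit circle give $|z_1(w)|\ge 1$ --- and tracking orientations and endpoint multiplicities so that the signs in the final $D$-sum come out right; a sign error there would spoil the coefficient $10$. Everything after that is routine dilogarithm manipulation using only the identities already listed.
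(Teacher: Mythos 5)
There is a genuine gap at the heart of your plan: the factorization you hope for does not exist. Clearing denominators, $zw\,p_L(z,w)$ is the quadratic $-(1+w)z^2+(6w-w^2-1)z-w(1+w)$ in $z$, whose discriminant is $(w^2-6w+1)^2-4w(w+1)^2=w^4-16w^3+30w^2-16w+1$, and this is not a square in $\C[w]$ (a square $(w^2+aw+b)^2$ would force $a=-8$, $b=\pm1$, giving middle coefficient $66$ or $62$, not $30$). A monomial substitution $z\mapsto zw^{\pm1}$ only changes the discriminant by a square monomial, so no such substitution makes $p_L$ split into $\C(w)$-linear factors in $z$, and the weave-style trick $q(z,w)=-p_W(z/w,wz)$ has no analogue here. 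Consequently you cannot apply Lemma~\ref{lemma} ``with $t=w$'': the roots $z_\pm(w)$ live in a genuine quadratic extension of $\C(w)$, and your wedge decomposition never gets started. (This is exactly the obstacle you flagged, but you offer no way around it, and without one the proof does not go through.) The resolution in the paper is different: the zero locus of $p_L$ has genus $0$, so one parametrizes it rationally by a single parameter $t$, namely $z=-\frac{(2t-1)(t-1)}{t+1}$, $w=-\frac{(t-2)(t-1)}{t(t+1)}$, writes $w\wedge z$ as a sum of wedges of \emph{linear functions of $t$}, and applies Lemma~\ref{lemma} in the variable $t$. The condition $|w|=1$, $|z|\ge1$ then corresponds to a $t$-path with endpoints $t=\frac{1\pm\sqrt{3}i}{2}$, which is where the sixth-root-of-unity arguments and hence $\vtet$ enter; the five-term relation, inversion, conjugation and squaring identities then collapse the endpoint evaluation to $10\,\vtet$. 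Note also that your reciprocity claim $z_1z_2=1$ is false as stated: from the quadratic above, $z_1z_2=w$; on $|w|=1$ one still has $|z_1z_2|=1$, so your conclusion about exactly one root lying outside the unit circle survives, but the reasoning needs this correction.

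Your final paragraph (the ``Consequently'' clause via \cite[Theorem~3.5]{ckp:bal_vol} and the face count giving $\volbp(L)=10\vtet$, and the reading of $D\bigl(\tfrac{1+\sqrt{3}}{2}\bigr)$ as $D\bigl(\tfrac{1+\sqrt{3}i}{2}\bigr)$) matches the paper and is fine; it is the analytic core --- obtaining a Bloch-type decomposition of $w\wedge z$ --- that needs the rational parametrization rather than a factorization over $\C(w)$.
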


\begin{proof}
We can parametrize the curve defined by $p_{L}(z,w)=0$ by using standard algorithms (see, e.g., \cite[Chapter 4]{SWPD}). We obtain
\[ z=-\frac{(2t-1)(t-1)}{t + 1},\qquad w=-\frac{(t-2)(t-1)}{t(t+1)}.\]

Setting $w=e^{i\theta}$ we write
\[e^{i\theta}=-\frac{(t-2)(t-1)}{t(t+1)} \implies
 (e^{i\theta}+1)t^2+(e^{i \theta}-3)t +2=0.\] Since
$\displaystyle{|z|=\left|\frac{(2t-1)(t-1)}{t + 1}\right|}$ and we have to integrate for $|z|\geq1$,
it can be seen that the integration domain is given by $\theta \in (0,2\pi)$ and that this corresponds to a path for $t$ that 
has boundary points in $t=\frac{1-\sqrt{3} i}{2}$ and $t=\frac{1+\sqrt{3}i}{2}$. 

We also have
\begin{align*}
w\wedge z = &\frac{(t-2)(t-1)}{t(t+1)} \wedge \frac{(2t-1)(t-1)}{t + 1}\\
=&(t-2)\wedge (2t-1)+(t-2)\wedge (t-1)- (t-2)\wedge (t+1)+(t-1)\wedge (2t-1)\\
&-t \wedge (2t-1)-t \wedge (t-1)+t \wedge (t+1)-(t+1)\wedge (2t-1).\\
\end{align*}

Applying Lemma~\ref{lemma} (and ignoring the terms of the form $(\pm 1)\wedge x$ and $x \wedge (\pm 1)$) we can express every term as a combination of terms of the form $\alpha \wedge 
(1-\alpha)$ as follows:
\begin{align*}
(t-2)\wedge (2t-1)=&\frac{4-2t}{3}\wedge \frac{2t-1}{3}-3\wedge \frac{4-2t}{2t-1}-2\wedge (2t-1)\\
(t-2)\wedge (t-1)=&(2-t)\wedge (t-1)\\
(t-2)\wedge (t+1)=&\frac{2-t}{3}\wedge \frac{t+1}{3}-3\wedge \frac{2-t}{t+1}\\
(t-1)\wedge (2t-1)=&(2-2t)\wedge (2t-1) -2\wedge (2t-1)\\
t \wedge (2t-1)=&(2t)\wedge (1-2t)-2\wedge(2t-1)\\
t \wedge (t-1)=&t\wedge (1-t)\\
t \wedge (t+1)=&(-t)\wedge (t+1)\\
(t+1)\wedge (2t-1)=&\frac{2t+2}{3}\wedge \frac{1-2t}{3}-3\wedge \frac{2t+2}{2t-1}-2\wedge (2t-1).\\
\end{align*}

Thus, we obtain
\begin{align*}
w \wedge z =&  \frac{4-2t}{3}\wedge \frac{2t-1}{3}
+(2-t)\wedge (t-1)
-\frac{2-t}{3}\wedge \frac{t+1}{3}+(2-2t)\wedge (2t-1)\\&-(2t)\wedge (1-2t)
-t \wedge (1-t)+(-t) \wedge (t+1)-\frac{2t+2}{3}\wedge \frac{1-2t}{3}.\\
\end{align*}
Using equation~\eqref{eq:sum}, this integrates to
\begin{align*}
 &-D\left(\frac{2t-1}{3}\right)-D(t-1)+D\left(\frac{t+1}{3}\right)-D(2t-1)
-D(2t)-D(t)+D(-t)-D\left(\frac{2t+2}{3}\right)\\
=&-D\left(\frac{2t-1}{3}\right)+D\left(\frac{1-2t}{3}\right) -D(t-1)+D(1-t)-D(2t-1)+D(1-2t) \\
&+D\left(\frac{t+1}{3}\right)+D(-t).
\end{align*}
In order to integrate we must evaluate the formula above in
$\frac{1-\sqrt{3}i}{2}$ and $\frac{1+\sqrt{3}i}{2}$ and take the
difference. But this is the same as evaluating in
$\frac{1+\sqrt{3}i}{2}$ and multiplying by 2, since
$D(z)=-D(\overline{z})$.
Using the formulas in equation~\eqref{eq:inverse} and \eqref{eq:conjugate}, 
we obtain further simplifications:
\begin{align*}
2\pi\m(p_{L})=&2D\left(\frac{i}{\sqrt{3}}\right)-2D\left(\frac{-i}{\sqrt{3}}\right)
+2D\left(\frac{-1+\sqrt{3}i}{2}\right)-2D\left(\frac{1-\sqrt{3}i}{2}\right)
+2D(\sqrt{3}i)\\
&-2D(-\sqrt{3}i) -2D\left(\frac{3+\sqrt{3}i}{6}\right)-2D\left(-\frac{1+\sqrt{3}i}{2}\right)\\
=&
4D\left(\frac{-1+\sqrt{3}i}{2}\right)-2D\left(\frac{1-\sqrt{3}i}{2}\right)
+8D(\sqrt{3}i)-2D\left(\frac{3+\sqrt{3}i}{6}\right).\\
\end{align*}

Using the identity \eqref{eq:square}, we have
\[2D\left(\frac{-1+\sqrt{3}i}{2}\right)+2D\left(\frac{1-\sqrt{3}i}{2}\right)=D\left(-\frac{1+\sqrt{3}i}{2}\right)=-D\left(\frac{-1+\sqrt{3}i}{2}\right)\]
and
\[3D\left(\frac{-1+\sqrt{3}i}{2}\right)+2D\left(\frac{1-\sqrt{3}i}{2}\right)=0.\]

The five-term relation \eqref{eq:five-term} generated by $\frac{1}{\sqrt{3}i}$ and $\frac{1+\sqrt{3}i}{2}$ leads to the following identity.
\[-2D(\sqrt{3}i)-D\left(\frac{1-\sqrt{3}i}{2}\right)+D\left(\frac{3+\sqrt{3}i}{6}\right)=0.\]
The five-term relation \eqref{eq:five-term} generated by $1+\sqrt{3}i$ and $\frac{-1+\sqrt{3}i}{2}$ yields
\[D\left(\frac{-1+\sqrt{3}i}{2}\right)-D\left(\frac{3+\sqrt{3}i}{6}\right)=0.\]

Putting all of this together, we get
\begin{align*}
2\pi\m(p_{L})=& 4D\left(\frac{-1+\sqrt{3}i}{2}\right)-2D\left(\frac{1-\sqrt{3}i}{2}\right)
+8D(\sqrt{3}i)-2D\left(\frac{3+\sqrt{3}i}{6}\right)\\
=&-10D\left(\frac{1-\sqrt{3}i}{2}\right)=10D\left(\frac{1+\sqrt{3}i}{2}\right)=
10\vtet.
\end{align*}
By \cite[Theorem 3.5]{ckp:bal_vol},
$$\vol((T^2\times I) -L) = \volbp(L) = 10\vtet.$$ 
Thus, Conjecture \ref{conj:volbipy} is verified for the triaxial link $\LL$ with an equality.
\end{proof}

\subsection{Rhombitrihexagonal link}

\begin{center}
\begin{figure}[h]
\begin{tabular}{ccc}
 \includegraphics[height= 2 in]{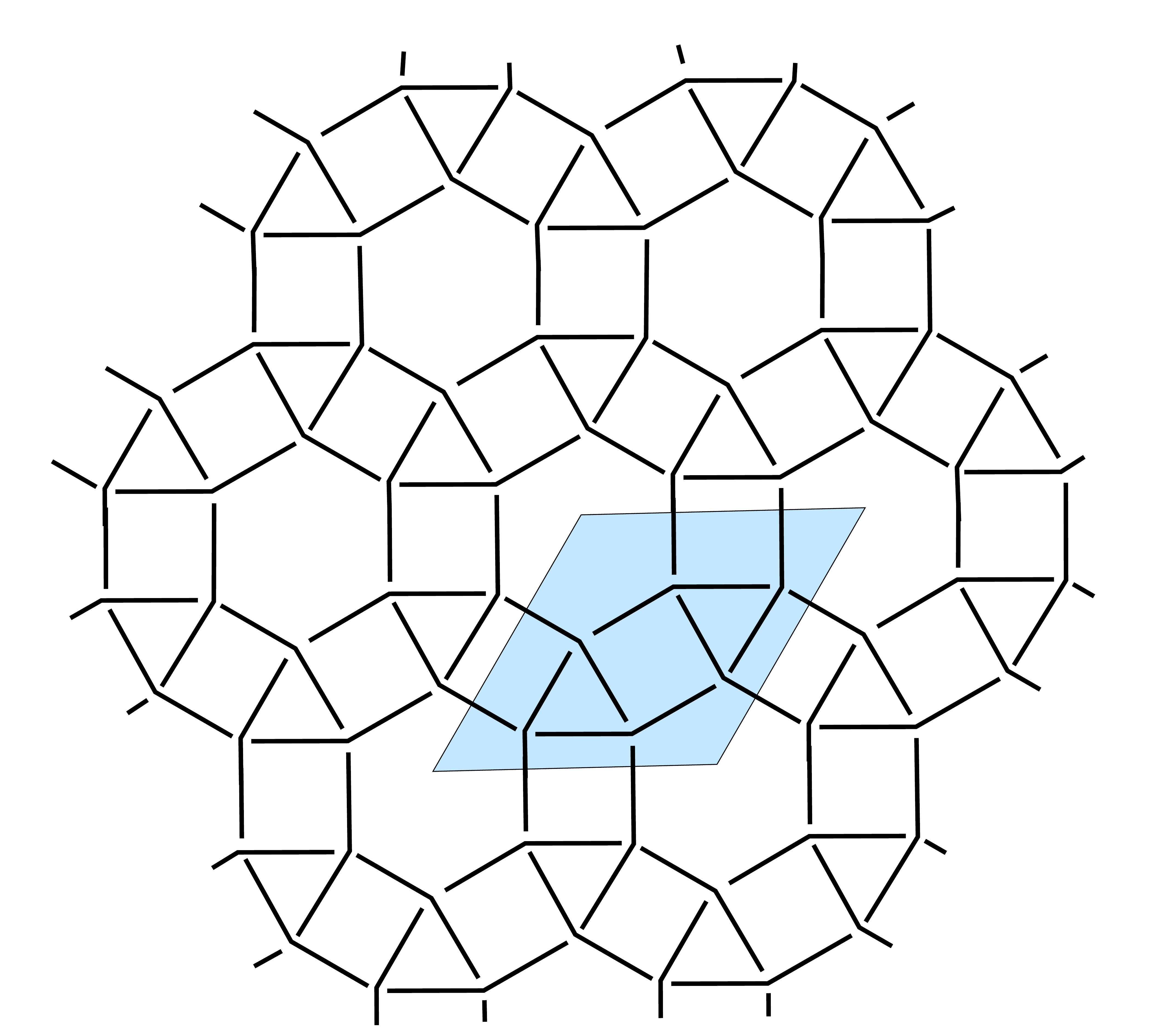} &
 \includegraphics[height=1.75 in]{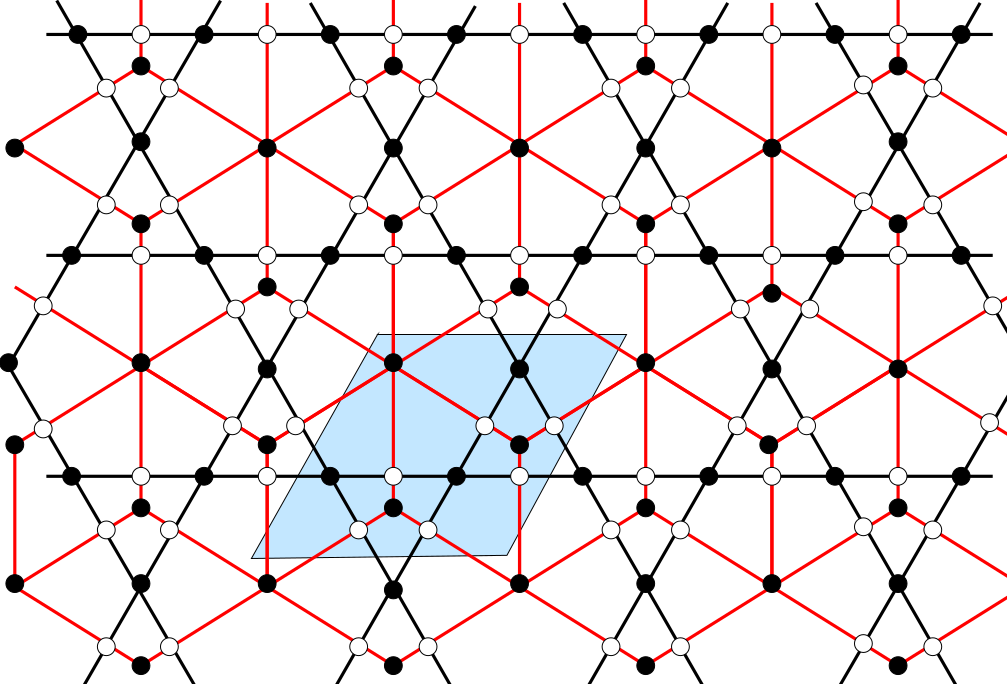}  \\
(a) &  (b)
\end{tabular} 
\caption{(a) Diagram of the biperiodic Rhombitrihexagonal link $\RR$, and
  fundamental domain for $R$.  (b) Overlaid graph $G_{\RR}^b$ and
  fundamental domain for $G_{R}^b$.}
\label{fig:rthGb}
\end{figure}
\end{center}

Figure~\ref{fig:rthGb} shows the rhombitrihexagonal link $\RR$ and its alternating quotient link $R$ in
$T^2 \times I$.  For the fundamental domain for $G_{\RR}^b$ as in Figure~\ref{fig:rthGb} (middle), $p(z,w)= \det\K(z,w)$ is 
$$ p_{R}(z,w) = 6\,(6 -1/w - w - 1/z - z - w/ z - z/w ). $$

\begin{corollary}
  \label{cor:mm-rth}
  $$ 2\pi\, \m(p_{R}) = 2\pi\log (6)+ 10 \vtet.$$
Consequently, $\vol((T^2\times I) - R) = \volbp(R) < 2\pi \,\m(p_{R})$.
\end{corollary}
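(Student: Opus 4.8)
The plan is to recognize that $p_R$ is a constant multiple of the triaxial link polynomial of Theorem~\ref{thm:mm-triaxial} and then exploit the behavior of the Mahler measure under products. From the formula for $p_R$ displayed just before the corollary,
\[ p_R(z,w) = 6\left(6 - \frac{1}{w} - w - \frac{1}{z} - z - \frac{w}{z} - \frac{z}{w}\right) = 6\, p_L(z,w), \]
where $p_L$ is exactly the characteristic polynomial of the triaxial link appearing in equation~\eqref{eqn:charpol-triaxial} and Theorem~\ref{thm:mm-triaxial}. Since the logarithmic Mahler measure is additive over products and $\m(c)=\log|c|$ for a nonzero constant $c$, I would write $\m(p_R)=\m(6)+\m(p_L)=\log 6 + \m(p_L)$, and then invoke Theorem~\ref{thm:mm-triaxial}, which gives $2\pi\,\m(p_L)=10\vtet$. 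This yields at once
\[ 2\pi\,\m(p_R) = 2\pi\log 6 + 2\pi\,\m(p_L) = 2\pi\log 6 + 10\vtet, \]
which is the first assertion.

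For the geometric consequence I would proceed in two steps. First, read off the face degrees of a fundamental domain of $R$ from Figure~\ref{fig:rthGb}: it has triangular, quadrilateral and hexagonal faces, so, using $\vol(B_3)=2\vtet$, $\vol(B_4)=\voct$, $\vol(B_6)=6\vtet$, one computes $\volbp(R)=\sum_f \vol(B_{|f|})=10\vtet+3\voct$. Second, invoke the relevant result of \cite{ckp:bal_vol}: the link $R$ is semi-regular, so the equality case of \eqref{eq:volbp} applies and $\vol((T^2\times I)-R)=\volbp(R)$. The remaining strict inequality $\volbp(R) < 2\pi\,\m(p_R)=2\pi\log 6 + 10\vtet$ then reduces to the numerical check $3\voct < 2\pi\log 6$, which holds since $3\voct\approx 10.99 < 11.26 \approx 2\pi\log 6$.

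There is no serious obstacle here: the entire analytic content of the corollary is carried by Theorem~\ref{thm:mm-triaxial}, and everything else is the factorization $p_R = 6\,p_L$ together with the behavior of $\m$ under products and constants, plus a cited geometric equality. The only step requiring care is the last numerical inequality, where one must confirm that the extra bipyramid volume $3\voct$ coming from the quadrilateral faces of $R$ (which the triaxial link does not have) is strictly smaller than the $2\pi\log 6$ gained from the leading coefficient; this is true, though not by a wide margin, which is presumably why the corollary records a strict rather than a sharp inequality.
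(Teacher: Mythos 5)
Your proposal is correct and follows essentially the same route as the paper: factor out the constant $6$ so that $\m(p_R)=\log 6+\m(p_L)$ and apply Theorem~\ref{thm:mm-triaxial}, then use \cite[Theorem~3.5]{ckp:bal_vol} to get $\vol((T^2\times I)-R)=\volbp(R)=10\vtet+3\voct$ and conclude by the numerical comparison $3\voct\approx 10.99<11.26\approx 2\pi\log 6$, exactly as in the paper's proof.
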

\begin{proof}

Using Theorem \ref{thm:mm-triaxial}, we see that $2\pi \m(p_{R}) = 2\pi\log (6)+ 10 \vtet .$
 
By \cite[Theorem~3.5]{ckp:bal_vol},
$\volbp(R) = \vol((T^2 \times I) -R) = 10\vtet + 3\voct$.

Hence,
\begin{eqnarray*}
\volbp(R) = \vol((T^2 \times I) -R) & = & 10\, \vtet + 3\,\voct  \approx 21.141 \\ 
& < &  10\, \vtet + 2\pi \log(6)  \approx 21.407 \\ 
&=& 2 \pi\ \m(p_{R}(z,w)). 
\end{eqnarray*}
\end{proof}

\subsection{The link $\CC_0$}

\begin{center}
\begin{figure}[h]
\begin{tabular}{cc}
 \includegraphics[height=1.75 in]{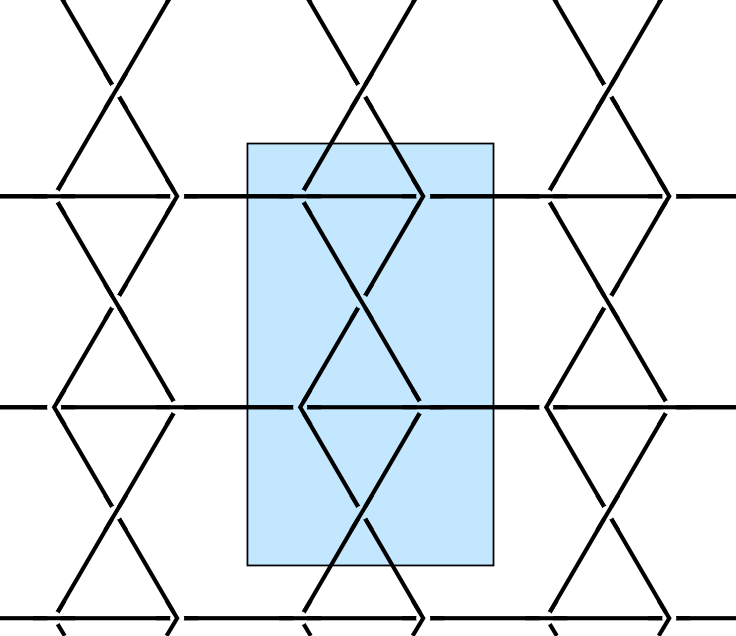} &
\hspace*{2cm}
 \includegraphics[height=1.75 in]{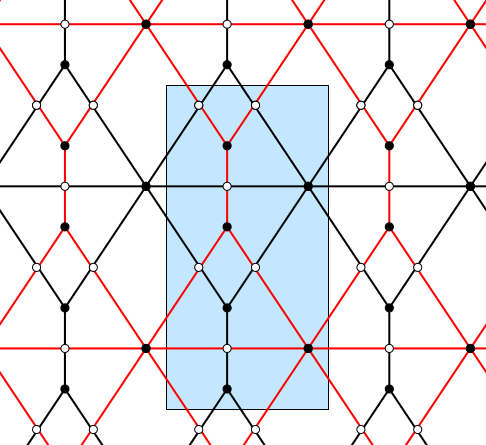}  \\
(a) & \hspace*{2cm} (b)
\end{tabular} 
\caption{(a) Diagram of biperiodic link $\CC_0$, and fundamental
  domain for $C_0$.  (b) Overlaid graph $G_{\CC_0}^b$ and fundamental
  domain for $G_{C_0}^b$.}
\label{fig:c0}
\end{figure}
\end{center}

Figure~\ref{fig:c0}(a) shows the biperiodic alternating link $\CC_0$,
and fundamental domain for its alternating quotient link $C_0$ in $T^2
\times I$.  For the fundamental domain for $G_{\CC_0}^b$ as in
Figure~\ref{fig:c0}(b), we have
\[p_{C_0}(z,w)=(-z(w^2-4w+1)+w^2+4w+1)^2.\]
\begin{theorem}
\label{thm:mm-c0}
 $$ 2\pi\, \m(p_{C_0}) = 16D\left((2+\sqrt{3}) i\right)+
  \frac{8\pi}{3}\log(2+\sqrt{3}). $$
Consequently, $\displaystyle \vol((T^2\times I) - C_0) = \volbp(C_0) < 2\pi \,\m(p_{C_0})$.
\end{theorem}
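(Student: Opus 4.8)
The plan is to follow the Boyd--Rodriguez-Villegas machinery exactly as in the proofs of Theorems~\ref{thm:mm-weave} and~\ref{thm:mm-triaxial}, applied to the polynomial
\[
q_{C_0}(z,w) = -z(w^2-4w+1)+w^2+4w+1,
\]
whose square is $p_{C_0}$, so that $\m(p_{C_0}) = 2\,\m(q_{C_0})$. First I would note that $q_{C_0}$ is linear in $z$, with $z = \frac{w^2+4w+1}{w^2-4w+1}$, so the curve $\{q_{C_0}=0\}$ has genus $0$ and is rationally parametrized by $w$ itself; there is no need to invoke a parametrization algorithm. Since $q_{C_0}^*$ (the leading coefficient in $z$) is $-(w^2-4w+1)$, I would compute $\m(q_{C_0}^*) = \m(w^2-4w+1) = \log(2+\sqrt{3})$ by Jensen's formula, the larger root being $2+\sqrt 3$. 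Then I would determine the integration domain on $|w|=1$: writing $w = e^{i\theta}$, the condition $|z|\ge 1$ amounts to $|w^2+4w+1| \ge |w^2-4w+1|$, i.e.\ $\re\!\big(\bar w(w^2+1)\big) = 2\cos\theta \ge 0$ fails sign -- more precisely one expands $|w^2+4w+1|^2 - |w^2-4w+1|^2 = 16\,\re\!\big((w^2+1)\bar w\,\overline{(\cdots)}\big)$ and reduces to a sign condition on $\cos\theta$, giving an arc whose endpoints are the values of $w$ where $|z|=1$, which should come out to $w = \pm i$ or a pair of conjugate unit-modulus points.

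Next I would compute the wedge product $w\wedge z = w \wedge \frac{w^2+4w+1}{w^2-4w+1} = w\wedge(w^2+4w+1) - w\wedge(w^2-4w+1)$ in $(\C(w)^*\wedge\C(w)^*)_\Q$. Factoring $w^2+4w+1 = (w+2+\sqrt3)(w+2-\sqrt3)$ and $w^2-4w+1 = (w-2-\sqrt3)(w-2+\sqrt3)$, each factor is linear in $w$, so Lemma~\ref{lemma} (with $a=1$ and $b$ one of $\pm 2\pm\sqrt3$) rewrites every term $w\wedge(w+b)$ and every cross term $(w+b)\wedge(w+b')$ as a combination of symbols $\alpha\wedge(1-\alpha)$ plus terms $c\wedge w$ or $c\wedge(w+b)$ with $c$ constant; the latter integrate directly via $\eta(c,\cdot)$, contributing a $\log$ term, and this is the source of the $\frac{8\pi}{3}\log(2+\sqrt3)$ in the statement. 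Applying equation~\eqref{eq:sum}, each $\alpha\wedge(1-\alpha)$ term integrates to $-\frac{1}{2\pi}D(\alpha)$ evaluated at the two (conjugate) endpoints; using $D(\bar z) = -D(z)$ from \eqref{eq:conjugate} collapses the difference into twice the value at one endpoint, as in the triaxial proof.

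The remaining step, and the one I expect to be the main obstacle, is the dilogarithm bookkeeping: after substituting the endpoint value of $w$ one obtains a $\Q$-linear combination of Bloch--Wigner values $D(\text{algebraic points})$, and one must use the five-term relation \eqref{eq:five-term}, the functional equations \eqref{eq:inverse} and \eqref{eq:square}, and possibly the ``unitarization'' identity \eqref{eq:unitary} to collapse everything to a multiple of $D\big((2+\sqrt3)i\big)$. Guessing the right sequence of five-term relations (as was done with the relations ``generated by $\frac{1}{\sqrt3 i}$ and $\frac{1+\sqrt3 i}{2}$'' etc.\ in Theorem~\ref{thm:mm-triaxial}) is the creative part; I would organize it by identifying which arguments are related by inversion or $z\mapsto 1-z$ to a common set of ``primitive'' points on the unit circle, then search among five-term relations generated by pairs of those. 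Once the identity $2\pi\,\m(q_{C_0}) = 8D\big((2+\sqrt3)i\big) + \frac{4\pi}{3}\log(2+\sqrt3)$ is established, doubling gives the stated formula for $2\pi\,\m(p_{C_0})$. Finally, to get the strict inequality $\vol((T^2\times I)-C_0) = \volbp(C_0) < 2\pi\,\m(p_{C_0})$, I would invoke \cite[Theorem~3.5]{ckp:bal_vol} to identify $\volbp(C_0)$ with the sum of $\vol(B_{|f|})$ over the faces of $C_0$ read off from Figure~\ref{fig:c0}, and then compare numerically: since $D\big((2+\sqrt3)i\big) > 0$ and $\log(2+\sqrt3) > 0$, the right side exceeds the $\volbp$ value, which should be a combination of $\vtet$ and $\voct$ strictly smaller than $16D\big((2+\sqrt3)i\big) + \frac{8\pi}{3}\log(2+\sqrt3)$.
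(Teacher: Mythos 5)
Your outline follows essentially the same route as the paper: reduce to $q(z,w)=-z(w^2-4w+1)+w^2+4w+1$ with $\m(p_{C_0})=2\,\m(q)$, solve linearly for $z$, take the Jensen term $\m(w^2-4w+1)=\log(2+\sqrt3)$, determine the arc $\cos\theta\ge 0$ (endpoints $w=\pm i$), split $w\wedge z$ into dilogarithm symbols plus constant-wedge-function terms, and evaluate at the endpoints. Two points where your expectations diverge from what is actually needed are worth flagging. First, the ``creative five-term hunt'' you anticipate does not occur: because $2-\sqrt3=(2+\sqrt3)^{-1}$, the endpoint dilogarithm values are $D\bigl(\pm(2\pm\sqrt3)i\bigr)$ and collapse to $8D\bigl((2+\sqrt3)i\bigr)$ per factor of $q$ using only $D(1/z)=-D(z)$ and $D(\bar z)=-D(z)$; no five-term relation (and in fact not even Lemma~\ref{lemma}) is used — the paper factors $w^2\pm 4w+1$ as $(1\pm(2+\sqrt3)w)(1\pm(2-\sqrt3)w)$ so each symbol is already of the form $Rw\wedge(1+Rw)$, and there are no cross terms $(w+b)\wedge(w+b')$ in the expansion at all. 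Second, the genuinely computational ingredient that you leave unexecuted is the evaluation of the constant-wedge contributions: one must compute $\int_{-i}^{i} d\arg\bigl(\tfrac{1+Rw}{1-Rw}\bigr)=2\arctan\bigl(\tfrac{2}{R-R^{-1}}\bigr)$ (the paper's Lemma~\ref{lem:R-integral}), which gives $\pi/3$ at $R=2+\sqrt3$ and $-\pi/3$ at $R=2-\sqrt3$; these combine with the Jensen term $2\pi\log(2+\sqrt3)$ to produce $2\pi\,\m(q)=8D\bigl((2+\sqrt3)i\bigr)+\tfrac{4\pi}{3}\log(2+\sqrt3)$, so the $\tfrac{8\pi}{3}$ is not sourced solely from the $c\wedge z$ terms as you suggest. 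Finally, for the last claim, positivity of $D\bigl((2+\sqrt3)i\bigr)$ and $\log(2+\sqrt3)$ does not by itself give the strict inequality; one needs \cite[Theorem~3.5]{ckp:bal_vol} to get $\vol((T^2\times I)-C_0)=\volbp(C_0)=20\vtet\approx 20.299$ (it is a multiple of $\vtet$ alone, no $\voct$ term), against the numerical value $2\pi\,\m(p_{C_0})\approx 20.803$. With these computations filled in, your plan coincides with the paper's proof.
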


\begin{proof}
Let $q(z,w)=-z(w^2-4w+1)+w^2+4w+1$.
Since $p_{C_0}(z,w)= q(z,w)^2$, it's enough to compute $\m(q)$. 

  In the equation $q(z,w)=0$ 
solve for $z$ in terms of $w$. Setting
  $w=e^{i\theta}$ we get
\[|z|=\left| \frac{w+4+w^{-1}}{w-4+w^{-1}}\right|=\left| \frac{\cos \theta +2}{\cos \theta -2}\right|.\]
 Hence $|z|\geq 1$ iff $\cos \theta\geq 0$ iff $-\frac{\pi}{2}\leq \theta \leq \frac{\pi}{2}$.

The wedge product leads to
\begin{align*}
w\wedge z =& w \wedge \frac{w^2+4w+1}{w^2-4w+1}\\
=& w \wedge \frac{(1+(2+\sqrt{3})w)(1+(2-\sqrt{3})w)}{(1-(2+\sqrt{3})w)(1-(2-\sqrt{3})w)}\\
=& w \wedge (1+(2+\sqrt{3})w) + w\wedge (1+(2-\sqrt{3})w) - w \wedge (1-(2+\sqrt{3})w) \\
& - w \wedge (1-(2-\sqrt{3})w)\\
= & (2+\sqrt{3}) w \wedge (1+(2+\sqrt{3})w) -(2+\sqrt{3}) \wedge (1+(2+\sqrt{3})w)\\ 
 & + (2-\sqrt{3}) w \wedge (1+(2-\sqrt{3})w) -(2-\sqrt{3}) \wedge (1+(2-\sqrt{3})w)\\ 
 & -(2+\sqrt{3}) w \wedge (1-(2+\sqrt{3})w)+(2+\sqrt{3}) \wedge (1-(2+\sqrt{3})w)\\
& -(2-\sqrt{3}) w \wedge (1-(2-\sqrt{3})w) +(2-\sqrt{3}) \wedge (1-(2-\sqrt{3})w).\\ 
\end{align*}

The Mahler measure of the leading coefficient polynomial equals
\[\m(w^2-4w+1) = \m\left((w-(2+\sqrt{3}))(w-(2-\sqrt{3}))\right)=\log(2+\sqrt{3}).\]

By applying equation~\eqref{eq:sum}, this gives
\begin{align*}
2\pi \, \m(q) -2\pi \log(2+\sqrt{3})= & -2D\left(-(2+\sqrt{3}) i\right)+ 2D\left((2+\sqrt{3}) i\right)-2D\left(-(2-\sqrt{3}) i\right) \\
&+ 2D\left((2-\sqrt{3}) i\right)
 +\log (2+\sqrt{3})\int_{-i}^i d \arg \left(\frac{1+(2+\sqrt{3})w}{1-(2+\sqrt{3})w}\right)\\
&+\log(2-\sqrt{3})\int_{-i}^i d \arg \left(\frac{1+(2-\sqrt{3})w}{1-(2-\sqrt{3})w}\right).
\end{align*}

\begin{lemma}
\label{lem:R-integral} We have
\begin{equation}\label{eq:R-integral}
\int_{-i}^i d \arg \left(\frac{1+Rw}{1-Rw}\right)
=2\arctan\left(\frac{2}{R-R^{-1}}\right).
\end{equation}
\end{lemma}
\begin{proof}
\begin{align*}
I(R):=\int_{-i}^i d \arg \left(\frac{1+Rw}{1-Rw}\right)=& \int_{-i}^{i} \im \left(\frac{R dw}{1+Rw}+\frac{R dw}{1-Rw}\right) \\
=& -2\int_{-\pi/2}^{\pi/2}\re \left(\frac{d \theta}{R^{-1}e^{-i\theta}-Re^{i\theta}}\right)\\
=&2(R-R^{-1})\int_{-\pi/2}^{\pi/2}\frac{\cos \theta  d\theta}{R^2+R^{-2}-2 \cos(2\theta)}\\
=&2(R-R^{-1}) \int_{-1}^1\frac{ds}{(R-R^{-1})^2+4s^2},
\end{align*}
where we have set $s=\sin\theta$. Therefore, the lemma follows. 
\end{proof}

By specializing in $R=2+\sqrt{3}$, we obtain 
\[I(2+\sqrt{3})=\frac{\pi}{3}=-I(2-\sqrt{3}).\]
By using properties \eqref{eq:inverse} and \eqref{eq:conjugate}  we finally get
\begin{align*}
2\pi \,\m(q)= & 8D\left((2+\sqrt{3}) i\right)+ 2\pi \log(2+\sqrt{3})-\frac{\pi}{3}\log(2+\sqrt{3})+\frac{\pi}{3}\log(2-\sqrt{3})\\
=& 8D\left((2+\sqrt{3}) i\right)+ \frac{4\pi}{3}\log(2+\sqrt{3}) \approx 10.40161017.\\
\end{align*}

Therefore, $2\pi \,\m(p_{C_0})=4\pi\,\m(q) \approx 20.80322034.$
By \cite[Theorem 3.5]{ckp:bal_vol},
$$\vol((T^2\times I) -C_0) = \volbp(C_0) = 20\vtet \approx 20.29883212$$
Thus, Conjecture~\ref{conj:volbipy} is verified for the link $\CC_0$.
\end{proof}

\subsection{The link $\CC_1$}
\begin{center}
\begin{figure}[h]
\begin{tabular}{cc}
 \includegraphics[height=1.75 in]{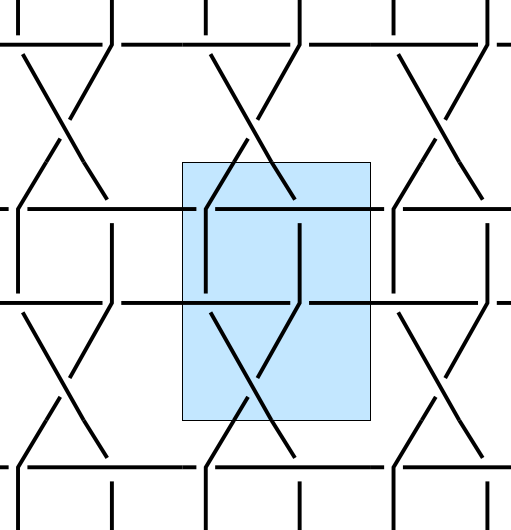} &
\hspace*{2cm}
 \includegraphics[height=1.75 in]{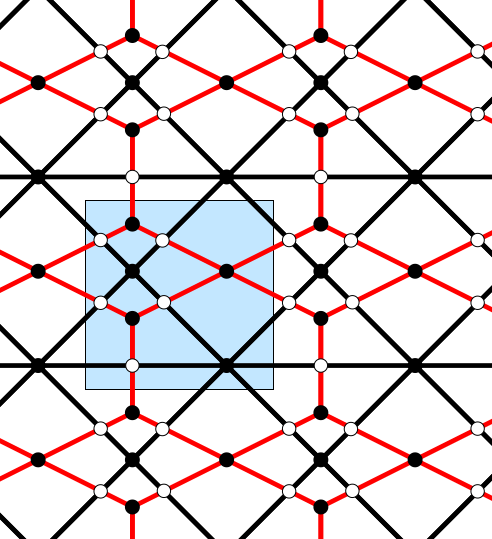}  \\
(a) & \hspace*{2cm} (b)
\end{tabular} 
\caption{(a) Diagram of biperiodic link $\CC_1$, and fundamental
  domain for $C_1$.  (b) Overlaid graph $G_{\CC_1}^b$ and fundamental
  domain for $G_{C_1}^b$.}
\label{fig:c1}
\end{figure}
\end{center}

Figure~\ref{fig:c1}(a) shows the biperiodic alternating link $\CC_1$,
and fundamental domain for its alternating quotient link $C_1$ in $T^2
\times I$.  For the fundamental domain for $G_{\CC_1}^b$ as in
Figure~\ref{fig:c1}(b), we have
\[p_{C_1}(z,w)= (1+w^2)(1-z)^2-w(6+20z+6z^2).\]

\begin{theorem}
\label{thm:mm-c1}
  $$ 2\pi\, \m(p_{C_1}) = 16D\left((1+\sqrt{2}) i\right)+2\pi \log(1+\sqrt{2}). $$
Consequently, $\vol((T^2\times I) - C_1) = \volbp(C_1) < 2\pi \,\m(p_{C_1})$.
\end{theorem}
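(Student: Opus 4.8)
The plan is to follow the same route as the proof of Theorem~\ref{thm:mm-c0}, using the method encoded in \eqref{eq:sum}. Viewing $p_{C_1}$ as a quadratic in $z$, namely $(w^2-6w+1)z^2-2(w^2+10w+1)z+(w^2-6w+1)$, its $z$--discriminant is $32\,w(w+1)^2$, which becomes a perfect square after the substitution $w=u^2$. One then checks directly that
\[ w = u^2, \qquad z = \frac{(u-b)(u+a)}{(u-a)(u+b)}, \qquad a = 1+\sqrt2,\quad b = 1-\sqrt2 = -\tfrac1a, \]
parametrizes the curve $p_{C_1}=0$, so it has genus $0$; the involution $u\mapsto -u$ over a fixed $w$ swaps the two $z$--roots, whose product is $1$. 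Hence for $w$ on the unit circle exactly one root has modulus $\geq1$; writing $u=e^{i\phi}$ one finds $|z|=(1+a^2+2a\cos\phi)/(1+a^2-2a\cos\phi)$, so $|z|\geq 1$ iff $\cos\phi\geq0$, and the domain of integration $\{|w|=1,\,|z|\geq1\}$ is the right half of the unit circle, traversed once from $u=-i$ to $u=i$. The leading coefficient of $p_{C_1}$ in $z$ is $w^2-6w+1=(w-(1+\sqrt2)^2)(w-(1-\sqrt2)^2)$, so $\m(w^2-6w+1)=2\log(1+\sqrt2)$.

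I would then compute the relevant wedge product. Because $b=-1/a$, the factors $u-b$ and $u+b$ rewrite as $(au+1)/a$ and $(au-1)/a$ and the scalars cancel, giving
\[ w\wedge z = 2\bigl(u\wedge(au+1)+u\wedge(u+a)-u\wedge(u-a)-u\wedge(au-1)\bigr) \quad\text{in}\quad (\C(u)^*\wedge\C(u)^*)_\Q. \]
The terms $u\wedge(au\pm1)$ split by bilinearity as $(au)\wedge(au\pm1)-a\wedge(au\pm1)$, and the terms $u\wedge(u\pm a)$ split by Lemma~\ref{lemma} as $\tfrac{u}{\pm a}\wedge\tfrac{u\pm a}{\pm a}-a\wedge\tfrac{u}{u\pm a}$. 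Collecting these, $w\wedge z$ becomes a $\Q$--combination of terms $\alpha\wedge(1-\alpha)$ with $\alpha\in\{\pm au,\ \pm u/a\}$, together with terms of the form $a\wedge(\text{rational in }u)$. By \eqref{eq:sum} and $\eta(\alpha,1-\alpha)=dD(\alpha)$, the first kind of term contributes $D(\alpha)$ evaluated at the endpoints $u=\pm i$; since $D(\overline\zeta)=-D(\zeta)$, $D(1/\zeta)=-D(\zeta)$, and $1/(ai)=\overline{i/a}$, each of these four contributions reduces to a multiple of $D\!\left((1+\sqrt2)\,i\right)$, and together they sum (after the outer $2$) to $-16\,D\!\left((1+\sqrt2)\,i\right)$. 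The constant-wedge terms assemble into a single $a\wedge h(u)$ with $h(u)=\dfrac{(u+a)(u+b)}{(u-a)(u-b)}$, contributing $\log(1+\sqrt2)\int_{-i}^{i}d\arg h(u)$.

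The remaining point is the integral $\int_{-i}^{i}d\arg h(u)$ over the right half of the unit circle, a computation of exactly the type in Lemma~\ref{lem:R-integral}: after setting $s=\sin\phi$ one is reduced to elementary integrals $\int_{-1}^1 ds/(A^2+B^2s^2)=\tfrac{2}{AB}\arctan(B/A)$, and the identities $a^2-1=2a$ and $1-b^2=2|b|$ force every arctangent to equal $\arctan(1)=\pi/4$, yielding $\int_{-i}^{i}d\arg h(u)=\pi$. Putting the pieces together,
\[ 2\pi\,\m(p_{C_1}) = 2\pi\,\m(w^2-6w+1) - \int_{-i}^{i}\eta(w,z) = 4\pi\log(1+\sqrt2) + 16\,D\!\left((1+\sqrt2)\,i\right) - 2\pi\log(1+\sqrt2), \]
which is the asserted formula. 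Finally, \cite[Theorem~3.5]{ckp:bal_vol} gives $\vol((T^2\times I)-C_1)=\volbp(C_1)=10\vtet+2\voct\approx17.48$, while the right-hand side above is $\approx17.58$, so $\vol((T^2\times I)-C_1)=\volbp(C_1)<2\pi\,\m(p_{C_1})$ and Conjecture~\ref{conj:volbipy} holds for $\CC_1$ with a strict inequality.

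The main obstacle is the dilogarithm bookkeeping: one must carry the torsion (sign) factors correctly through the wedge manipulations and apply \eqref{eq:inverse}--\eqref{eq:conjugate} in the right sequence to collapse the four endpoint values into a single multiple of $D((1+\sqrt2)i)$, much as in the proof of Theorem~\ref{thm:mm-triaxial}. A secondary subtlety is orienting the assembled function $h(u)$ correctly so that the net coefficient of $\log(1+\sqrt2)$ is exactly $2\pi$ rather than merely close to it.
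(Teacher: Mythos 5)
Your proposal is correct and follows essentially the same route as the paper's proof: the substitution $w=u^2$ to make the discriminant a square, the factorization of $z$ into linear factors with roots built from $1\pm\sqrt2$, the wedge decomposition into $\alpha\wedge(1-\alpha)$ terms plus $(1+\sqrt2)\wedge h(u)$ terms, the dilogarithm evaluation at the endpoints $u=\pm i$, the Lemma~\ref{lem:R-integral}-type integral producing the $\log(1+\sqrt2)$ contribution, the leading-coefficient term $\m(w^2-6w+1)=2\log(1+\sqrt2)$, and the final comparison with $\volbp(C_1)=10\vtet+2\voct$ via \cite[Theorem~3.5]{ckp:bal_vol}. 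The only cosmetic difference is that you track a single branch of $z$ through the rational parametrization, using the $u\mapsto -u$ symmetry and the factor $2$ from $w=u^2$, where the paper integrates both roots $z_\pm$ separately and adds the two half-circle contributions.
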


\begin{proof}
Let
\[ p_1(z,w)=p_{C_1}(z,w^2)=
(w^4-6w^2+1)z^2-2(w^4+10w^2+1)z+(w^4-6w^2+1). \]
Then $\m(p_{C_1})=\m(p_1)$. Solving for $z$ in terms of $w$, we get two roots

\[z_\pm = \frac{w^4+10w^2+1 \pm 4\sqrt{2} w(w^2+1)}{w^4-6w^2+1}.\]

We need to impose conditions for $|z_\pm|\geq 1$. Set $w=e^{i\theta}$. Then
\begin{align*}
z_\pm  =&\frac{w^2+10+w^{-2} \pm 4\sqrt{2} (w+w^{-1})}{w^2-6+w^{-2}}
= \frac{\cos(2\theta)+5\pm 4 \sqrt{2}\cos \theta}{\cos(2\theta)-3}\\
=&\frac{\cos^2\theta\pm 2 \sqrt{2}\cos \theta +2}{\cos^2\theta -2}
= \frac{\cos \theta\pm \sqrt{2}}{\cos \theta\mp \sqrt{2}}.
\end{align*}

Thus, we get $|z_+|\geq 1$ iff $\cos \theta\geq 0$ and $|z_-|\geq 1$ iff $\cos \theta\leq 0$.

\begin{align*}
w\wedge z_\pm =& w \wedge \frac{(1+(1\pm \sqrt{2})w)^2(1-(1\mp \sqrt{2})w)^2}{(1+(1+\sqrt{2})w)(1-(1-\sqrt{2})w)(1+(1-\sqrt{2})w)(1-(1+\sqrt{2})w)}\\
=&  w \wedge \frac{(1+(1\pm \sqrt{2})w)(1-(1\mp \sqrt{2})w)}{(1-(1\pm \sqrt{2})w)(1+(1\mp \sqrt{2})w)}\\
=& w \wedge (1+(1\pm \sqrt{2})w)+w \wedge (1-(1\mp \sqrt{2})w) -w \wedge (1-(1\pm \sqrt{2})w)\\
&-w \wedge (1+(1\mp \sqrt{2})w)\\
=& (1\pm \sqrt{2})w \wedge (1+(1\pm \sqrt{2})w)-(1\pm \sqrt{2})\wedge  (1+(1\pm \sqrt{2})w)\\
&+(1\mp \sqrt{2})w \wedge (1-(1\mp \sqrt{2})w)-(1\mp \sqrt{2})\wedge  (1-(1\mp \sqrt{2})w)\\
&-(1\pm \sqrt{2})w \wedge (1-(1\pm \sqrt{2})w)+(1\pm \sqrt{2})\wedge  (1-(1\pm \sqrt{2})w)\\
&- (1\mp \sqrt{2})w \wedge (1+(1\mp \sqrt{2})w)+(1\mp \sqrt{2})\wedge  (1+(1\mp \sqrt{2})w).\\
\end{align*}

The Mahler measure of the leading coefficient polynomial equals
\begin{align*}
\m(w^4-6w^2+1) =& \m \left((w+(1+\sqrt{2}))(w-(1-\sqrt{2}))(w+(1-\sqrt{2}))(w-(1+\sqrt{2}))\right)\\
= & 2\log(1+\sqrt{2}).
\end{align*}

By putting together the cases of $z_+$ and $z_-$, and using equation~\eqref{eq:sum}, we get
\begin{align*}
 2\pi \m(p_{C_1}) -4\pi \log(1+\sqrt{2})= & -4D\left(-(1+\sqrt{2}) i\right)+4D\left((1+\sqrt{2}) i\right)-4D(-(1-\sqrt{2})i)\\
&+4D((1-\sqrt{2})i)
 +2\log (1+\sqrt{2})\int_{-i}^i d \arg \left(\frac{1+(1+\sqrt{2})w}{1-(1+\sqrt{2})w}\right)\\
&+2\log(\sqrt{2}-1)\int_{-i}^i d \arg \left(\frac{1+(\sqrt{2}-1)w}{1-(\sqrt{2}-1)w}\right).
  \end{align*}
By equation~\eqref{eq:R-integral} in Lemma \ref{lem:R-integral},
\[I(1+\sqrt{2})=\frac{\pi}{2}=-I(\sqrt{2}-1).\]

Thus, by properties \eqref{eq:inverse} and \eqref{eq:conjugate},  we finally obtain
\begin{align*}
 2\pi \m(p_{C_1})=&16D\left((1+\sqrt{2}) i\right)+4\pi \log(1+\sqrt{2})-\pi\log (1+\sqrt{2})+\pi\log (\sqrt{2}-1)\\
=&16D\left((1+\sqrt{2}) i\right)+2\pi \log(1+\sqrt{2}) \approx \ 17.58392561.
 \end{align*}

By \cite[Theorem 3.5]{ckp:bal_vol},
$$ \vol((T^2\times I) -C_1) = \volbp(C_1) = 10\vtet + 2\voct \approx \ 17.47714082.$$
Thus, Conjecture \ref{conj:volbipy} is verified for the link $\CC_1$.
\end{proof} 


\subsection{The family of links $\CC_n$ (numerical results).}

We present some numerical results that generalize the rigorously proven examples $\CC_0$ and $\CC_1$.

\begin{figure}[h]
  \centering 
\includegraphics[height=1.8in]{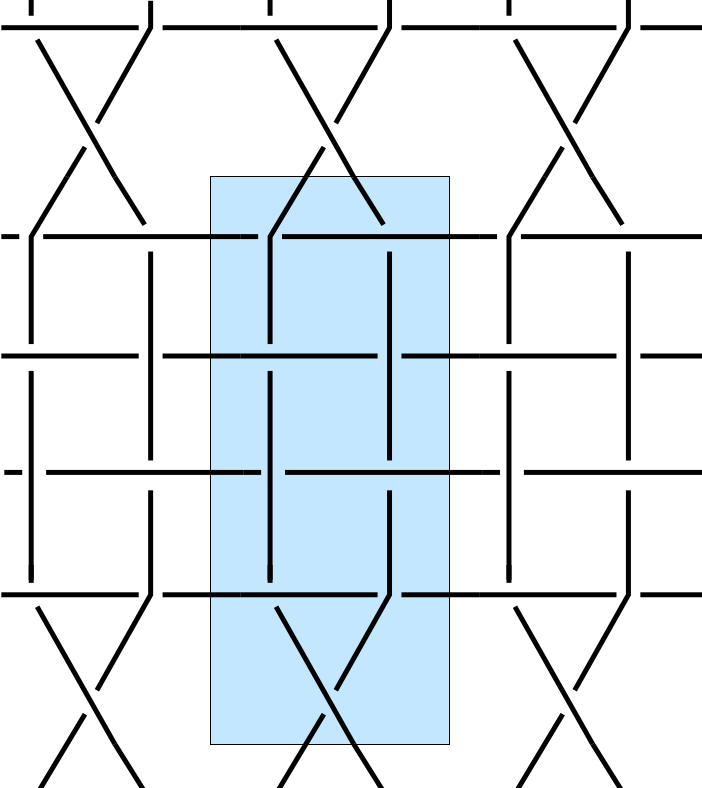}
\caption{Diagram for one of the biperiodic alternating links $\CC_n$, and fundamental
  domain for $C_n$.  The link $C_n$ as shown is for $n=3$. }
  \label{fig:Cn}
\end{figure}

Let $\CC_n$ be the family of biperiodic alternating links shown in
Figure~\ref{fig:c0} ($n=0$), Figure~\ref{fig:c1} ($n=1$), and
Figure~\ref{fig:Cn} ($n=3$).  For even values of $n$, the fundamental
domain like the one shown in Figure ~\ref{fig:Cn} does not result in a
toroidally alternating link.  In these cases, we need to double the
fundamental domain, as in Figure~\ref{fig:c0} for the link $\CC_0$.
Consequently, all the quantities
$c(L),\,\vol(L),\,\volbp(L),\,\m(p(z,w))$ are doubled, which does not
affect the claim in Conjecture~\ref{conj:volbipy}.

\begin{conjecture}
\label{conj:charpol-Cn} 
The characteristic polynomial for the dimer model corresponding to the toroidal link $C_n$ is 
\[p_{C_n}(z,w)=(1+w^2)(1-z)^{n+1}+(-1)^n w \sum_{j=0}^{n+1}\binom{2n+4}{2j+1} z^j.\]
\end{conjecture}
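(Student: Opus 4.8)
\section*{Proof proposal for Conjecture~\ref{conj:charpol-Cn}}

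The statement asserts that an explicit determinant, $\det\K_{C_n}(z,w)$, equals a closed‑form Laurent polynomial, so the plan is a direct computation organized around the repetitive structure of the family $\{\CC_n\}$. The first and most delicate task is to describe the overlaid graph $G^b_{\CC_n}$ and its toroidal quotient $G^b_{C_n}$ uniformly in $n$. From the diagrams (Figures~\ref{fig:c0}, \ref{fig:c1}, \ref{fig:Cn}), the link $\CC_n$ is obtained from a fixed base pattern by inserting a twist region of length $n$ (equivalently, $n$ copies of a fixed elementary tangle) along one periodicity direction; accordingly I would choose the homology basis $(\gamma_z,\gamma_w)$ with $\gamma_z$ running along that direction (so that the $z$-degree grows with $n$ while the $w$-degree stays equal to $2$), and a choice of Kasteleyn signs on $G^b_{C_n}$ that is coherent for the whole family, so that the tangles inside the twist region contribute identically for every $n$. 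With vertices ordered along $\gamma_z$, the Kasteleyn matrix $\K_{C_n}(z,w)$ is then block tridiagonal with corner blocks, whose interior blocks do not depend on $n$. For even $n$ this fundamental domain is not toroidally alternating and must be doubled, exactly as for $\CC_0$; so the displayed expression is the characteristic polynomial of the primitive (non‑alternating) domain, and the characteristic polynomial of the toroidally alternating $C_n$ is its square (matching $p_{C_0}=q^2$ in Theorem~\ref{thm:mm-c0}, where $q$ is the $n=0$ instance of the displayed formula). The two normalizations differ only by the doubling of $c(L),\volbp(L),\m(p)$ already noted, and do not affect Conjecture~\ref{conj:volbipy}.

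Given this set‑up, I would compute $D_n:=\det\K_{C_n}(z,w)$ by a transfer‑matrix argument: row and column operations that strip the crossings of one elementary tangle, followed by a cofactor expansion, express the determinant of the length‑$n$ chain in terms of the determinants of the length‑$(n-1)$ and shorter chains, so $D_n$ satisfies a linear recurrence in $n$ with coefficients in $\Z[z]$, of order equal to the size of the relevant transfer matrix. I expect this recurrence to be third order,
\[
D_{n+1}=-(1+3z)\,D_n+(1-z)(1+3z)\,D_{n-1}+(1-z)^3\,D_{n-2},
\]
with characteristic polynomial $\big(X-(1-z)\big)\big(X^2+2(1+z)X+(1-z)^2\big)$; if the combinatorics produce a higher‑order recurrence the strategy is unchanged, but more base cases are needed.

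It remains to identify the solution. The displayed formula satisfies exactly the above recurrence: substituting $z=t^2$ and using
\[
\sum_{j=0}^{n+1}\binom{2n+4}{2j+1}t^{2j}=\frac{(1+t)^{2n+4}-(1-t)^{2n+4}}{2t},\qquad (1-z)^{n+1}=(1-t)^{n+1}(1+t)^{n+1},
\]
it becomes a $\Z[z]$-linear combination, in $n$, of the geometric sequences $(1-z)^n$, $\big(-(1+t)^2\big)^n$ and $\big(-(1-t)^2\big)^n$, whose common ratios have elementary symmetric functions $-(1+3z)$, $-(1-z)(1+3z)$, $(1-z)^3$ in $\Z[z]$. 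One then checks that both sides agree at $n=0,1$ (against $q$ and $p_{C_1}$ from Theorems~\ref{thm:mm-c0} and \ref{thm:mm-c1}) and at $n=2$ (by computing $\det\K_{C_2}$ once), and concludes by uniqueness of the solution of a third‑order linear recurrence with prescribed initial data. Equivalently one may skip the extra base case and verify directly, via the binomial identity above, that the displayed polynomial satisfies the recurrence; this also serves as an independent check that the recurrence was read off correctly.

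The main obstacle is the first step: extracting the combinatorial model correctly and uniformly in $n$. One has to read $G^b_{\CC_n}$ off the link diagrams, make a choice of Kasteleyn signs and homology basis that is consistent across the whole family (so that $\{\K_{C_n}\}$ really is a block‑banded family with $n$-independent interior data), and carry all the signs — the Kasteleyn signs, the orientations of $\gamma_z,\gamma_w$, and the factor $(-1)^n$ — through the determinant expansion so that the solved recurrence reproduces the stated formula on the nose rather than a sign‑ or shift‑twisted variant. The even‑$n$ doubling must also be stated precisely, since for even $n$ the toroidally alternating representative of $C_n$ has characteristic polynomial equal to the \emph{square} of the displayed expression. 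The remaining steps — the transfer‑matrix reduction, solving the recurrence, and the binomial identity — are mechanical.
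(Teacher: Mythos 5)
First, a point of comparison: the paper does not prove this statement at all --- it is stated as Conjecture~\ref{conj:charpol-Cn}, supported only by the explicitly computed cases $n=0,1$ (the polynomials $q$ and $p_{C_1}$ appearing in Theorems~\ref{thm:mm-c0} and~\ref{thm:mm-c1}) and by numerical Mahler-measure data. So there is no paper proof to match; your proposal must stand on its own, and as it stands it has a genuine gap. The entire argument funnels through the claim that $D_n:=\det\K_{C_n}(z,w)$ satisfies a fixed linear recurrence in $n$ with the coefficients you display, but that recurrence is never derived from the dimer model: you write ``I expect this recurrence to be third order,'' and its characteristic polynomial $\bigl(X-(1-z)\bigr)\bigl(X^2+2(1+z)X+(1-z)^2\bigr)$ is visibly reverse-engineered from the conjectured closed form (it is exactly the minimal recurrence satisfied by the three geometric sequences $(1-z)^n$, $(-(1+t)^2)^n$, $(-(1-t)^2)^n$ with $z=t^2$). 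Everything that would make this a proof --- a uniform, $n$-independent description of $G^b_{\CC_n}$ with a coherent choice of Kasteleyn signs and homology basis, the identification of the block/transfer structure of $\K_{C_n}$, and the computation showing that the resulting recurrence (whose order and coefficients come from the transfer matrix, not from the answer) is the one you wrote down --- is deferred as ``the main obstacle.'' The parts you do verify (that the closed form satisfies the guessed recurrence, and that it reproduces $q$ at $n=0$ and $p_{C_1}$ at $n=1$) are correct algebra, but they amount to the same consistency evidence the paper already has, not a proof.

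Two further points need care even within your plan. First, the toroidal quotient makes $\K_{C_n}$ block tridiagonal \emph{with corner blocks}; determinants of such cyclic chains are not generally governed by the naive open-chain cofactor recurrence, but by trace formulas in exterior powers of the transfer matrix, so the honest order of the recurrence (and hence the number of base cases needed) must come out of that analysis, not be assumed to be three. Second, the even-$n$ normalization is subtler than ``the toroidally alternating polynomial is the square of the displayed one'': the paper only exhibits $p_{C_0}=q^2$ for the specific doubled domain of $\CC_0$, and doubling a fundamental domain generically multiplies characteristic polynomials of the form $p(\pm z^{1/2},w)$ rather than literally squaring; you would need to prove the square (or whatever the correct relation is) for all even $n$, and your proposed extra base case $n=2$ itself requires setting up and evaluating a doubled-domain Kasteleyn determinant --- again part of the missing combinatorial work rather than a routine check.
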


If Conjecture~\ref{conj:charpol-Cn} holds, then Conjecture~\ref{conj:volbipy} would imply that
$ 10 \vtet+2n\voct < 2 \pi \m(p_{C_n})$. 
\[P_n(z,w):=p_{C_n}(z^2,w)=(1+w^2)(1-z^2)^{n+1}+(-1)^n w \frac{((1+z)^{2n+4}-(1-z)^{2n+4})}{2z}.\]
Then $\m(P_n)=\m(p_{C_n})$. 
Computing numerical values for $\m(P_n)$ with Mathematica, we form Table~\ref{table}. We see that Conjecture~\ref{conj:volbipy} is 
numerically confirmed for the first 12 values of $n$. 

\begin{table}
\begin{tabular}{|l|l|l|}
 \hline
 $n$ &  $10 \vtet+2n\voct$ & $2\pi\m(p_{C_n})$ \\
 \hline
 \hline
 $2$ & $24.80486557$ & $24.96932402$ \\ 
 $3$ & $32.13259032$ & $32.27389896$ \\
 $4$ & $39.46031507$ & $39.61527996$ \\
 $5$ & $46.78803983$ & $46.93541034$ \\
 $6$ & $54.11576458$ & $54.26836944$ \\
 $7$ & $61.44348933$ & $61.59270586$ \\
 $8$ & $68.77121409$ & $68.92297116$ \\
 $9$ & $76.09893884$ & $76.2489$ (*)\\
 $10$ & $83.42666359$ & $83.57804426$ \\
 $11$ & $90.75438835$ & $90.9047$ (*)\\
 $12$ & $98.08211310$ & $98.23330183$ \\
 \hline
\end{tabular}

\caption{Values for $\m(p_{C_n})$. For the two values indicated by (*),
we can only get precision up to 4 decimal places.}
\label{table}
\end{table}


\subsection{Medial graph on the 8-8-4 tiling}
Let $\KK$ denote biperiodic alternating link whose projection is the medial graph on the 8-8-4 tiling, as shown in Figure \ref{fig:884}. 
Let $K$ be its alternating quotient link in $T^2\times I$.
In this case, we have 
\[p_K(z,w)=-w^2z^2 + 6w^2z + 6wz^2 - w^2 + 28wz - z^2 + 6w + 6z - 1.\]

\begin{center}
\begin{figure}[h]
\begin{tabular}{cc}
 \includegraphics[height=1.75 in]{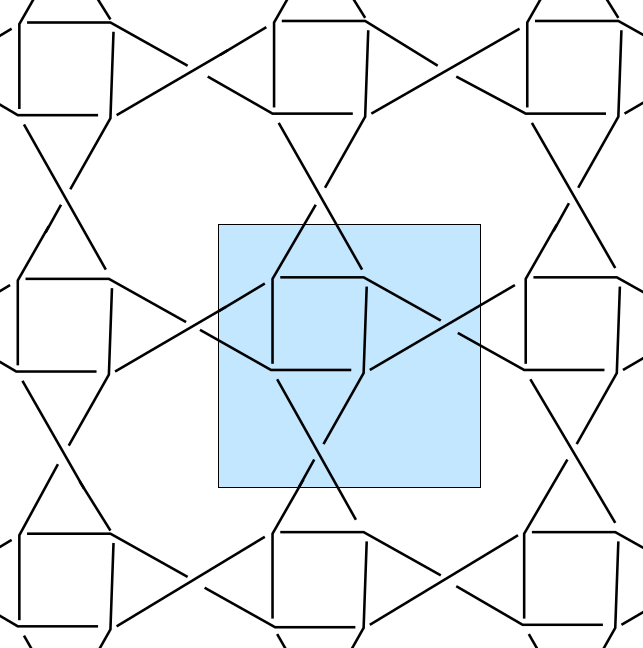} &
\hspace*{2cm}
 \includegraphics[height=1.75 in]{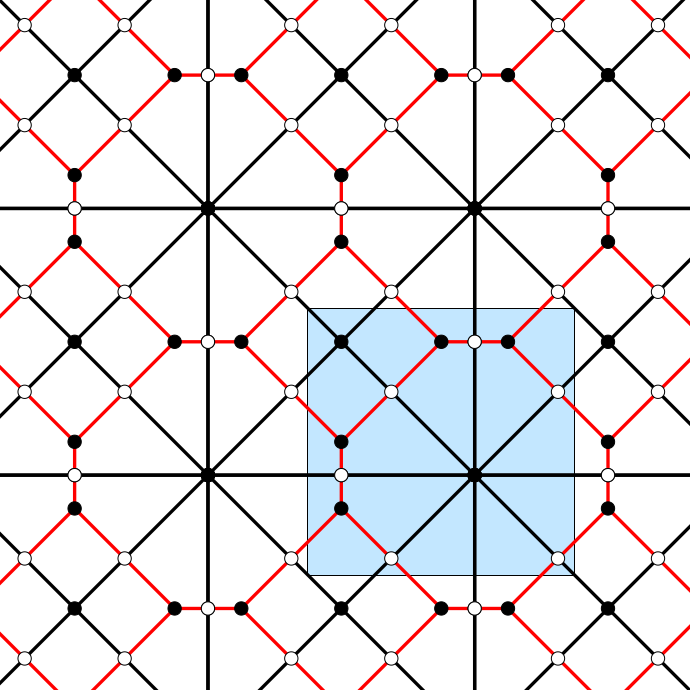}  \\
(a) & \hspace*{2cm} (b)
\end{tabular} 
\caption{(a) Diagram of biperiodic link $\KK$, and fundamental
  domain for $K$.  (b) Overlaid graph $G_{\KK}^b$ and fundamental
  domain for $G_{K}^b$.}
\label{fig:884}
\end{figure}
\end{center}

\begin{theorem}
\label{thm:mm-884}
  $$ 2\pi\, \m(p_K) =  \arccos\left(-\frac{7}{9}\right) \log(17+12\sqrt{2})
+8D(i)+4D\left(\frac{\sqrt{7+4\sqrt{2}i}}{3}\right)-4D\left(-\frac{\sqrt{7+4\sqrt{2}i}}{3}\right).$$
Consequently, $\vol((T^2\times I) - K) < \volbp(K) < 2\pi \,\m(p_K)$.
\end{theorem}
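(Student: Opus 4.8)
The plan is to compute $\m(p_K)$ by the Boyd--Rodriguez-Villegas method through equation~\eqref{eq:sum}, exactly as in the proofs of Theorems~\ref{thm:mm-triaxial}, \ref{thm:mm-c0} and \ref{thm:mm-c1}. Writing $p_K$ as a quadratic in $z$,
\[ p_K(z,w)=(w^2-6w+1)(z^2+1)-2(3w^2+14w+3)\,z, \]
the leading coefficient is $P^*(w)=w^2-6w+1$, so $\m(P^*)=\log(3+2\sqrt2)=\tfrac12\log(17+12\sqrt2)$, and the two roots $z_\pm(w)$ of $p_K(z,w)=0$ satisfy $z_+z_-\equiv1$; equivalently the curve may be cut out by $(z+z^{-1}-6)(w+w^{-1}-6)=64$. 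The affine curve $\{p_K=0\}$ has a single node, at $(z,w)=(-1,-1)$, where all first partials vanish and the tangent cone is $(z+1)^2+(w+1)^2=0$. Since a bidegree $(2,2)$ curve has arithmetic genus $1$ and the node lowers the geometric genus by one, the normalization $X$ has genus $0$, hence is rational; concretely one introduces $s=\sqrt{w^2+10w+1}$, parametrizes the conic $s^2=w^2+10w+1$ by a variable $t$, and obtains $w=\tfrac{(t-2)(t-3)}{t}$ together with $z$ a rational function of $t$.

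Next I would identify the integration cycle and the symbol. Setting $w=e^{i\theta}$, both roots $z_\pm$ are real and negative with product $1$, and the branch with $|z|\ge1$ crosses the other at $\theta=\pi$, passing through the node; consequently the boundary of $\{\,|w|=1,\ |z|\ge1\,\}$ on $X$ consists precisely of the two distinct preimages $P_1,P_2$ of $(-1,-1)$, which form a complex conjugate pair because the node's tangent directions $\pm i$ are interchanged by conjugation. In the parameter $t$ these endpoints are $t=2\pm i\sqrt2$. I would then expand the symbol $w\wedge z\in(\C(X)^*\wedge\C(X)^*)_\Q$ and, applying Lemma~\ref{lemma} to the linear factors that occur, rewrite it as $\sum_k\alpha_k\,z_k\wedge(1-z_k)$ plus a bounded number of residual terms $c_l\wedge f_l$ with $c_l$ a constant. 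By equation~\eqref{eq:sum} the Bloch--Wigner part contributes $\sum_k\alpha_k\bigl(D(z_k(P_1))-D(z_k(P_2))\bigr)$, and since $P_1=\overline{P_2}$ and $D(\overline z)=-D(z)$ this collapses to $-2\sum_k\alpha_k D(z_k(P_2))$; this is where the unit-modulus values $D(i)$ and $D\bigl(\pm\tfrac{\sqrt{7+4\sqrt{2}\,i}}{3}\bigr)=D\bigl(\pm\tfrac{2\sqrt2+i}{3}\bigr)$ enter. The residual terms integrate directly, by a computation of the same type as Lemma~\ref{lem:R-integral}, to $\log|c_l|\,[\arg f_l]_{P_1}^{P_2}$; the relevant change of argument along the cycle, together with the term $2\pi\,\m(P^*)=\pi\log(17+12\sqrt2)$, produces the coefficient $\arccos(-\tfrac79)$ of $\log(17+12\sqrt2)$. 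Simplifying the resulting combination of dilogarithm values with the five-term relation~\eqref{eq:five-term} and the identities \eqref{eq:inverse}, \eqref{eq:conjugate}, \eqref{eq:square} should yield the stated closed form for $2\pi\,\m(p_K)$.

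For the geometric inequalities, the faces of the medial diagram on the $8$-$8$-$4$ tiling are octagons, squares and triangles, so by \cite[Theorem~3.5]{ckp:bal_vol} one evaluates $\volbp(K)=8\vtet+\voct+\vol(B_8)$ and $\vol((T^2\times I)-K)$ explicitly; since $K$ is not semi-regular, equation~\eqref{eq:volbp} gives $\vol((T^2\times I)-K)<\volbp(K)$, and a numerical comparison with the closed form above gives $\volbp(K)<2\pi\,\m(p_K)$, verifying Conjecture~\ref{conj:volbipy} for $\KK$. The main obstacle is the middle paragraph: correctly locating the integration cycle and its endpoints over the node, carrying out the lengthy symbol decomposition of $w\wedge z$ on the rational model, and --- most delicately --- evaluating the argument-change integral of the residual constant-wedge terms so as to recognize the coefficient $\arccos(-\tfrac79)$. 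By contrast, the final dilogarithm manipulations, though intricate, use the same toolkit already deployed for the triaxial, $\CC_0$ and $\CC_1$ links.
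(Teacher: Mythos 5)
Your strategy is the paper's strategy: apply equation~\eqref{eq:sum} after rationally parametrizing the genus-zero curve (your analysis of the node at $(-1,-1)$, the conic $s^2=w^2+10w+1$, and the endpoints $t=2\pm i\sqrt{2}$ over the node is correct and matches the paper's parametrization up to a M\"obius change of variable), decompose $w\wedge z$ via Lemma~\ref{lemma}, and simplify with five-term relations. The genuine gap is precisely where you flag ``the main obstacle,'' and it is not merely unfinished bookkeeping: your prescription for the residual terms, $\log|c_l|\,[\arg f_l]_{P_1}^{P_2}$, treats the chain $\{|w|=1,\ |z|\ge 1\}$ as if the only relevant boundary data were the two node preimages. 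That is true homologically, and it is why the single-valued $D(z_k)$ contributions at the basepoint over $w=1$ (where $\theta=0$ is identified with $2\pi$) cancel; but $\int d\arg f_l$ is a winding computation, not a two-point evaluation of a principal branch. The path is two arcs meeting at the point over $w=1$, and the continuous argument must be tracked through that point: in the paper this is exactly the role of the $t=0^{+}$ and $t=0^{-}$ columns of Table~\ref{table:arg-at-t}, where several rows pick up $\pm\pi$. These extra multiples of $\pi\log|\cdot|$ are what make the non-$\beta$ logarithmic terms collapse against $2\pi\,\m(P^*)$ and the $2\pi\log(5-2\sqrt{6})$ coming from the $(2\sqrt{6}-5)\wedge(\cdot)$ terms, leaving only $\arccos\left(-\frac{7}{9}\right)\log(17+12\sqrt{2})$; a principal-value evaluation at $P_1,P_2$ alone gives wrong rational multiples of $\pi$ in front of several logarithms. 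Beyond this, the actual content of the proof---the explicit sixteen-term symbol decomposition, the specific five-term relations, and the use of identity~\eqref{eq:unitary} to reach the unit-modulus arguments $\pm\sqrt{7+4\sqrt{2}i}/3$---is not carried out in your plan.

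A smaller but real gap is in the geometric conclusion: the strict inequality $\vol((T^2\times I)-K)<\volbp(K)$ does not follow from inequality~\eqref{eq:volbp} together with the remark that $K$ is not semi-regular, since that result asserts equality for semi-regular links but not strictness otherwise. The paper obtains the strict inequality by a rigorous numerical verification with SnapPy inside Sage ($\vol((T^2\times I)-K)\approx 19.559$ versus $\volbp(K)\approx 19.638$), and you would need the same (or another argument), together with the comparison $\volbp(K)<2\pi\,\m(p_K)\approx 19.77$, to finish the second statement.
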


\begin{proof}
The curve defined by the zero locus of this polynomial can be parametrized by 
\begin{align*}
w=&\frac{\sqrt{2}(t^2+1)-\sqrt{3}(t^2-1)}{\sqrt{2}(t^2+1)+\sqrt{3}(t^2-1)}=\frac{(2\sqrt{6}-5)\left(t^2-\left(5+2\sqrt{6}\right)\right)}{t^2-(5-2\sqrt{6})}\\
=&(2\sqrt{6}-5)\frac{\left(t-(\sqrt{3}+\sqrt{2})\right)\left(t+(\sqrt{3}+\sqrt{2})\right)}{\left(t-(\sqrt{3}-\sqrt{2})\right)\left(t+(\sqrt{3}-\sqrt{2})\right)},\\
z=&\frac{\sqrt{2}(t^2+1)-2\sqrt{3}t}{\sqrt{2}(t^2+1)+2\sqrt{3}t}=\frac{t^2-\sqrt{6}t+1}{t^2+\sqrt{6}t+1}=\frac{\left(t-\frac{\sqrt{3}+1}{\sqrt{2}}\right)\left(t-\frac{\sqrt{3}-1}{\sqrt{2}}\right)}{\left(t+\frac{\sqrt{3}+1}{\sqrt{2}}\right)\left(t+\frac{\sqrt{3}-1}{\sqrt{2}}\right)}.
\end{align*}
Setting $z=e^{i\theta}$ we get
\[e^{i\theta}=\frac{t^2-\sqrt{6}t+1}{t^2+\sqrt{6}t+1} \implies 
(e^{i\theta}-1)t^2+(e^{i\theta}+1)\sqrt{6}t+(e^{i\theta}-1)=0 \implies t^2-i \sqrt{6}\cot\left(\frac{\theta}{2}\right)t+1=0.\]
We continuously choose one of the two roots $t$ for the above polynomial in order to obtain the parametrization.
After some numerical computation we conclude that we have
to integrate for $\theta \in (0,\pi)$ and $t$ in the complex
imaginary segment between $0$ and $-i$, and for $\theta \in (\pi,2\pi)$ and 
$t$ in the complex imaginary segment between $i$ and $0$.

The general elements that we need to evaluate are
\[-\alpha_1\alpha_2\alpha_3\alpha_4 \, \left(t+\alpha_1 \sqrt{2}+\alpha_2 \sqrt{3}\right)\wedge \left(t +\frac{1+\alpha_4 \sqrt{3}}{\alpha_3 \sqrt{2}}\right), 
\ {\rm and}\ 
-\alpha_3\alpha_4 \, (2\sqrt{6}-5) \wedge \left(t +\frac{1+\alpha_4 \sqrt{3}}{\alpha_3 \sqrt{2}}\right).\]
where $\alpha_i \in \{\pm 1\}$ (all possible combinations). 

For the terms of the second kind equation \eqref{eq:sum} yields
\[\log|2\sqrt{6}-5|\left . \arg\left(\frac{t^2-\sqrt{6}t+1}{t^2+\sqrt{6}t+1}\right) \right|^{-i}_i=2\pi \log (5-2\sqrt{6})\approx -14.403772983899.\]

For the terms of the first kind we use Lemma \ref{lemma} (and ignore terms of the form $(\pm 1) \wedge x$ and $x \wedge (\pm 1)$) to obtain
\begin{align*}
&-\alpha_1\alpha_2\alpha_3\alpha_4 \, \left(t+\alpha_1 \sqrt{2}+\alpha_2 \sqrt{3}\right)\wedge \left(t +\frac{1+\alpha_4 \sqrt{3}}{\alpha_3 \sqrt{2}}\right)\\
=&-\alpha_1\alpha_2\alpha_3\alpha_4  \, \left(\frac{t+\alpha_1 \sqrt{2}+\alpha_2 \sqrt{3}}{\alpha_1 \sqrt{2}+\alpha_2 \sqrt{3}-\frac{1+\alpha_4 \sqrt{3}}{\alpha_3 \sqrt{2}}}\right) \wedge \left(\frac{t +\frac{1+\alpha_4 \sqrt{3}}{\alpha_3 \sqrt{2}}}{\alpha_1 \sqrt{2}+\alpha_2 \sqrt{3}-\frac{1+\alpha_4 \sqrt{3}}{\alpha_3 \sqrt{2}}}\right)\\
& + \alpha_1\alpha_2\alpha_3\alpha_4 \, \left(\alpha_1 \sqrt{2}+\alpha_2 \sqrt{3}-\frac{1+\alpha_4 \sqrt{3}}{\alpha_3 \sqrt{2}}\right)
\wedge \left(\frac{t+\alpha_1 \sqrt{2}+\alpha_2 \sqrt{3}}{t +\frac{1+\alpha_4 \sqrt{3}}{\alpha_3 \sqrt{2}}}\right).\\
\end{align*}
The terms in the second line integrate to 
\[\alpha_1\alpha_2\alpha_3\alpha_4 \, \log \left|\alpha_1 \sqrt{2}+\alpha_2 \sqrt{3}-\frac{1+\alpha_4 \sqrt{3}}{\alpha_3 \sqrt{2}}\right|
\left. \arg \left(\frac{t+\alpha_1 \sqrt{2}+\alpha_2 \sqrt{3}}{t +\frac{1+\alpha_4 \sqrt{3}}{\alpha_3 \sqrt{2}}} \right)\right|^{-i}_{i}.\]

Notice that exchanging the signs of $\alpha_1$, $\alpha_2$ and $\alpha_3$ together amounts to changing $t=\pm i$ to $t=\mp i$ in the argument 
and does not change the absolute value term inside the logarithm. 
See Table \ref{table:arg-at-t}.

\begin{table}
\begin{center}
\renewcommand{\arraystretch}{1.3}
\begin{tabular}{|cccc|c|c|c|c|}
\hline
&&& & argument & argument  & argument & argument \\
$\alpha_1$ & $\alpha_2$  & $\alpha_3$ & $\alpha_4$  & at $t=i$ &  at $t=0^+$ & at $t=0^-$& at $t=-i$\\
\hline
\hline
$1$ & $1$ & $1$ & $1$ & $\frac{\beta - \pi}{4}$ & $0$ & $0$  & $\frac{\pi-\beta}{4}$ \\
\hline 
$1$ & $-1$ & $1$ & $1$ & $\frac{\pi+\beta}{4}$ & $\pi$ & $-\pi$ & $-\frac{\pi+\beta}{4}$ \\
\hline
$1$ & $1$ & $-1$ & $1$ & $- \frac{3}{4}\pi $ & $-\pi$ & $\pi$ & $\frac{3}{4} \pi$\\
\hline
$1$ & $1$ & $1$ & $-1$ & $\frac{\beta-3\pi}{4}$ & $-\pi$ &$\pi$  &$\frac{3\pi-\beta}{4}$\\
\hline
$1$ & $-1$ & $-1$ & $1$ & $-\frac{1}{4}\pi $ & $0$ & $0$ & $\frac{1}{4} \pi$\\
\hline
$1$ & $-1$ & $1$ & $-1$ & $\frac{\beta-\pi}{4}$ & $0$& $0$  & $\frac{\pi-\beta}{4}$\\
\hline 
$1$ & $1$ & $-1$ & $-1$ & $-\frac{1}{4}\pi $ & $0$ & $0$ &  $\frac{1}{4} \pi$\\
\hline 
$1$ & $-1$ & $-1$ & $-1$ & $\frac{1}{4}\pi $ & $\pi$ &$-\pi$ &  $-\frac{1}{4} \pi$\\
\hline 
\end{tabular}
\renewcommand{\arraystretch}{1}
\end{center}
\caption{Argument at various $t$'s corresponding to values of $\alpha_i$'s. Here $\beta=\arccos\left(-\frac{7}{9}\right)\approx 2.4619188346815493642$. }
\label{table:arg-at-t}
\end{table}

Putting everything together, the integration of the logarithmic terms yields 
\begin{align*}
& \left(\pi -\beta \right)\log \left|\sqrt{2}+\sqrt{3}-\frac{1+\ \sqrt{3}}{\sqrt{2}}\right| 
+(\beta-3\pi) \log \left|\sqrt{2}-\sqrt{3}-\frac{1+\sqrt{3}}{\sqrt{2}}\right|
+\pi \log \left|\sqrt{2}+\sqrt{3}+\frac{1+\sqrt{3}}{\sqrt{2}}\right|\\
&+(\beta+\pi) \log \left|\sqrt{2}+\sqrt{3}-\frac{1-\sqrt{3}}{\sqrt{2}}\right|
+\pi \log \left|\sqrt{2}-\sqrt{3}+\frac{1+\sqrt{3}}{\sqrt{2}}\right|
+\left(\pi -\beta\right)\log \left|\sqrt{2}-\sqrt{3}-\frac{1-\sqrt{3}}{\sqrt{2}}\right|\\
&+\pi \log \left|\sqrt{2}+\sqrt{3}+\frac{1-\sqrt{3}}{\sqrt{2}}\right|
-3\pi \log \left|\sqrt{2}-\sqrt{3}+\frac{1-\sqrt{3}}{\sqrt{2}}\right|.
\end{align*}

The terms containing $\beta$ yield
\begin{align*}
& -\beta\log \left|\sqrt{2}+\sqrt{3}-\frac{1+\ \sqrt{3}}{\sqrt{2}}\right| 
+\beta\log \left|\sqrt{2}-\sqrt{3}-\frac{1+\sqrt{3}}{\sqrt{2}}\right|+\beta \log \left|\sqrt{2}+\sqrt{3}-\frac{1-\sqrt{3}}{\sqrt{2}}\right|\\
&-\beta\log \left|\sqrt{2}-\sqrt{3}-\frac{1-\sqrt{3}}{\sqrt{2}}\right|
= \beta \log (17+12 \sqrt{2}) \approx 8.679480937097002.\\
\end{align*}

The other terms yield
\begin{align*}
& \pi \log \left|\sqrt{2}+\sqrt{3}-\frac{1+\ \sqrt{3}}{\sqrt{2}}\right| 
-3\pi \log \left|\sqrt{2}-\sqrt{3}-\frac{1+\sqrt{3}}{\sqrt{2}}\right|
+\pi \log \left|\sqrt{2}+\sqrt{3}+\frac{1+\sqrt{3}}{\sqrt{2}}\right|\\
&+\pi \log \left|\sqrt{2}+\sqrt{3}-\frac{1-\sqrt{3}}{\sqrt{2}}\right|
+\pi \log \left|\sqrt{2}-\sqrt{3}+\frac{1+\sqrt{3}}{\sqrt{2}}\right|
+\pi \log \left|\sqrt{2}-\sqrt{3}-\frac{1-\sqrt{3}}{\sqrt{2}}\right|\\
&+\pi \log \left|\sqrt{2}+\sqrt{3}+\frac{1-\sqrt{3}}{\sqrt{2}}\right|
-3\pi \log \left|\sqrt{2}-\sqrt{3}+\frac{1-\sqrt{3}}{\sqrt{2}}\right|\\
=&\pi \log(833-588\sqrt{2}-480\sqrt{3}+ 340\sqrt{6})\approx 3.32810583970523.\\
\end{align*}

Finally, the dilogarithm terms are given by 
\[\alpha_1\alpha_2\alpha_3\alpha_4 D \left(\frac{-t -\frac{1+\alpha_4 \sqrt{3}}{\alpha_3 \sqrt{2}}}{\alpha_1 \sqrt{2}+\alpha_2 \sqrt{3}-\frac{1+\alpha_4 \sqrt{3}}{\alpha_3 \sqrt{2}}}\right)\]
Notice that exchanging the signs of $\alpha_1$, $\alpha_2$, and $\alpha_3$ together amounts to changing the sign of $t$. This 
can be combined with formulas in equations~\eqref{eq:inverse} and \eqref{eq:conjugate}  to obtain

\begin{align*}
& 4D  \left(\frac{i -\frac{1+\sqrt{3}}{\sqrt{2}}}{\sqrt{2}+\sqrt{3}-\frac{1+\sqrt{3}}{\sqrt{2}}}\right)
- 4D  \left(\frac{i -\frac{1+\sqrt{3}}{\sqrt{2}}}{\sqrt{2}-\sqrt{3}-\frac{1+\sqrt{3}}{\sqrt{2}}}\right)
 - 4D  \left(\frac{i +\frac{1+\sqrt{3}}{\sqrt{2}}}{\sqrt{2}+\sqrt{3}+\frac{1+\sqrt{3}}{\sqrt{2}}}\right)\\ 
& - 4D  \left(\frac{i -\frac{1-\sqrt{3}}{\sqrt{2}}}{\sqrt{2}+\sqrt{3}-\frac{1-\sqrt{3}}{\sqrt{2}}}\right)
+ 4D  \left(\frac{i +\frac{1+\sqrt{3}}{\sqrt{2}}}{\sqrt{2}-\sqrt{3}+\frac{1+\sqrt{3}}{\sqrt{2}}}\right)
+ 4D  \left(\frac{i -\frac{1- \sqrt{3}}{\sqrt{2}}}{\sqrt{2}-\sqrt{3}-\frac{1-\sqrt{3}}{\sqrt{2}}}\right)\\
&+ 4D  \left(\frac{i +\frac{1-\sqrt{3}}{\sqrt{2}}}{\sqrt{2}+\sqrt{3}+\frac{1-\sqrt{3}}{\sqrt{2}}}\right)
- 4D  \left(\frac{i +\frac{1-\sqrt{3}}{\sqrt{2}}}{\sqrt{2}-\sqrt{3}+\frac{1-\sqrt{3}}{\sqrt{2}}}\right)\\
\approx& 4 \cdot 2.77301284617524  \approx 11.092051384700.\\
\end{align*}

By applying the five-term relation and identities such as the following
\[\frac{-i -\frac{1+\sqrt{3}}{\sqrt{2}}}{\sqrt{2}-\sqrt{3}-\frac{1+\sqrt{3}}{\sqrt{2}}}=
1-\frac{1}{\frac{1-\frac{i +\frac{1-\sqrt{3}}{\sqrt{2}}}{\sqrt{2}-\sqrt{3}+\frac{1-\sqrt{3}}{\sqrt{2}}}}
{1-\frac{i -\frac{1-\sqrt{3}}{\sqrt{2}}}{\sqrt{2}+\sqrt{3}-\frac{1-\sqrt{3}}{\sqrt{2}}}\frac{i +\frac{1-\sqrt{3}}{\sqrt{2}}}{\sqrt{2}-\sqrt{3}+\frac{1-\sqrt{3}}{\sqrt{2}}}}},\]
one can prove
\begin{align*}
&D  \left(\frac{i -\frac{1-\sqrt{3}}{\sqrt{2}}}{\sqrt{2}+\sqrt{3}-\frac{1-\sqrt{3}}{\sqrt{2}}}\right)
+D  \left(\frac{i +\frac{1-\sqrt{3}}{\sqrt{2}}}{\sqrt{2}-\sqrt{3}+\frac{1-\sqrt{3}}{\sqrt{2}}}\right)
=D  \left(\frac{i -\frac{1+\sqrt{3}}{\sqrt{2}}}{\sqrt{2}-\sqrt{3}-\frac{1+\sqrt{3}}{\sqrt{2}}}\right)
+D  \left(\frac{i +\frac{1+\sqrt{3}}{\sqrt{2}}}{\sqrt{2}+\sqrt{3}+\frac{1+\sqrt{3}}{\sqrt{2}}}\right)\\
\end{align*}
and
\begin{align*}
& D  \left(\frac{i -\frac{1+\sqrt{3}}{\sqrt{2}}}{\sqrt{2}+\sqrt{3}-\frac{1+\sqrt{3}}{\sqrt{2}}}\right)
+ D  \left(\frac{i +\frac{1+\sqrt{3}}{\sqrt{2}}}{\sqrt{2}-\sqrt{3}+\frac{1+\sqrt{3}}{\sqrt{2}}}\right)
= D  \left(\frac{i -\frac{1- \sqrt{3}}{\sqrt{2}}}{\sqrt{2}-\sqrt{3}-\frac{1-\sqrt{3}}{\sqrt{2}}}\right)
+ D  \left(\frac{i +\frac{1-\sqrt{3}}{\sqrt{2}}}{\sqrt{2}+\sqrt{3}+\frac{1-\sqrt{3}}{\sqrt{2}}}\right).
\end{align*}

This allows us to simplify the dilogarithm terms as
\begin{align*}
& 8D  \left(\frac{i -\frac{1+\sqrt{3}}{\sqrt{2}}}{\sqrt{2}+\sqrt{3}-\frac{1+\sqrt{3}}{\sqrt{2}}}\right)
- 8D  \left(\frac{i -\frac{1+\sqrt{3}}{\sqrt{2}}}{\sqrt{2}-\sqrt{3}-\frac{1+\sqrt{3}}{\sqrt{2}}}\right)
-8D  \left(\frac{i +\frac{1+\sqrt{3}}{\sqrt{2}}}{\sqrt{2}+\sqrt{3}+\frac{1+\sqrt{3}}{\sqrt{2}}}\right)
+8 D  \left(\frac{i +\frac{1+\sqrt{3}}{\sqrt{2}}}{\sqrt{2}-\sqrt{3}+\frac{1+\sqrt{3}}{\sqrt{2}}}\right).
\end{align*}

By using the identity \eqref{eq:unitary} we see that the dilogarithm terms equal
\[8D(i)+4D\left(\frac{\sqrt{7+4\sqrt{2}i}}{3}\right)-4D\left(-\frac{\sqrt{7+4\sqrt{2}i}}{3}\right).\]

Then we have to add everything as well as the Mahler measure of $z^2-6z+1$ which is
\[2\pi \m(z^2-6z+1)=2 \pi \log(3+2\sqrt{2})\approx 2 \pi\cdot 1.76274717403908 \approx  11.075667144194722.\]

Putting everything together and collapsing terms, we obtain
\begin{align*}
 2\pi \m(p_K)
=& \arccos\left(-\frac{7}{9}\right) \log(17+12\sqrt{2})+8D(i)+4D\left(\frac{\sqrt{7+4\sqrt{2}i}}{3}\right)-4D\left(-\frac{\sqrt{7+4\sqrt{2}i}}{3}\right)\\
\approx & 19.771532321797992256575200922336735211.
\end{align*}

Finally,
$$\volbp(K) = \vol(B_8) + \vol(B_4) + 4 \vol(B_3) \approx 7.8549+3.6638+4 \times 2.0298 =19.6379. $$
Using SnapPy~\cite{snappy} inside Sage to verify the computation rigorously, we verified that 
$$\vol((T^2\times I) - K) \approx 19.559.$$
Thus, the link $\KK$ satisfies Conjecture~\ref{conj:volbipy}, as well as inequality~\eqref{eq:volbp-3} within a range of 0.4\%,
$$ \vol((T^2\times I) - K) < \volbp(K) < 2\pi\, \m(p_K). $$
\end{proof}

We remark that, except for the link $\KK$, the logarithmic terms in
the formulas for $2\pi \m(p)$ for all the other links above are of the
form $q \pi \log (\alpha)$, where $q$ is a rational number and
$\alpha$ is an algebraic number.  In Theorem~\ref{thm:mm-884}, we have
instead a term of the form $\arccos\left(-\frac{7}{9}\right)
\log(\alpha)$. The parameter $-\frac{7}{9}$ is also involved in the
arguments for the dilogarithm terms, since
\[\frac{\sqrt{7+4\sqrt{2}i}}{3}=\exp\left(\frac{i}{2}\left(\pi-\arccos\left(-\frac{7}{9}\right)\right)\right).\]

\bibliographystyle{amsplain} 

\begin{thebibliography}{10}

\bibitem{Adams:bipyramids}
Colin Adams, \emph{Bipyramids and bounds on volumes of hyperbolic links},
  Topology Appl. \textbf{222} (2017), 100--114. \MR{3630197}

\bibitem{adams:hyperbolicity}
Colin Adams, Carlos Albors-Riera, Beatrix Haddock, Zhiqi Li, Daishiro Nishida,
  and Luya Wang, \emph{Hyperbolicity of links in thickened surfaces},
  arXiv:1802.05770 [math.GT], 2018.

\bibitem{Adams:crossings}
Colin Adams, Thomas Fleming, Michael Levin, and Ari~M. Turner, \emph{Crossing
  number of alternating knots in {$S\times I$}}, Pacific J. Math. \textbf{203}
  (2002), no.~1, 1--22.

\bibitem{BDM}
Peter Borwein, Edward Dobrowolski, and Michael~J. Mossinghoff, \emph{Lehmer's
  problem for polynomials with odd coefficients}, Ann. of Math. (2)
  \textbf{166} (2007), no.~2, 347--366. \MR{2373144}

\bibitem{BoydRV2002}
D.~Boyd and F.~Rodriguez-Villegas, \emph{Mahler's measure and the dilogarithm.
  {I}}, Canad. J. Math. \textbf{54} (2002), no.~3, 468--492. \MR{1 900 760}

\bibitem{brv03}
D.~Boyd, F.~Rodriguez-Villegas, and N.~Dunfield, \emph{Mahler's measure and the
  dilogarithm ({II})}, arXiv:math.NT/0308041.

\bibitem{Boyd:speculations}
David~W. Boyd, \emph{Speculations concerning the range of {M}ahler's measure},
  Canad. Math. Bull. \textbf{24} (1981), no.~4, 453--469. \MR{644535}

\bibitem{Boyd:special}
\bysame, \emph{Mahler's measure and special values of {$L$}-functions},
  Experiment. Math. \textbf{7} (1998), no.~1, 37--82. \MR{1618282}

\bibitem{boyd02}
\bysame, \emph{Mahler's measure and invariants of hyperbolic manifolds}, Number
  theory for the millennium, {I} ({U}rbana, {IL}, 2000), A K Peters, Natick,
  MA, 2002, pp.~127--143. \MR{MR1956222 (2004a:11054)}

\bibitem{Breusch}
Robert Breusch, \emph{On the distribution of the roots of a polynomial with
  integral coefficients}, Proc. Amer. Math. Soc. \textbf{2} (1951), 939--941.
  \MR{0045246}

\bibitem{Burton}
Stephan~D. Burton, \emph{The determinant and volume of 2-bridge links and
  alternating 3-braids}, New York J. Math. \textbf{24} (2018), 293--316.

\bibitem{Cthesis}
Abhijit Champanerkar, \emph{A-polynomial and {B}loch invariants of hyperbolic
  3-manifolds}, ProQuest LLC, Ann Arbor, MI, 2003, Thesis (Ph.D.)--Columbia
  University. Preprint available at
  \url{http://www.math.csi.cuny.edu/~abhijit/research.html}. \MR{2704573}

\bibitem{ck:det_mp}
Abhijit Champanerkar and Ilya Kofman, \emph{Determinant density and biperiodic
  alternating links}, New York J. Math. \textbf{22} (2016), 891--906.
  \MR{3548129}

\bibitem{ckp:density}
Abhijit Champanerkar, Ilya Kofman, and Jessica~S. Purcell, \emph{Density
  spectra for knots}, J. Knot Theory Ramifications \textbf{25} (2016), no.~3,
  1640001, 11. \MR{3475068}

\bibitem{ckp:gmax}
\bysame, \emph{Geometrically and diagrammatically maximal knots}, J. Lond.
  Math. Soc. (2) \textbf{94} (2016), no.~3, 883--908. \MR{3614933}

\bibitem{ckp:weaving}
\bysame, \emph{Volume bounds for weaving knots}, Algebr. Geom. Topol.
  \textbf{16} (2016), no.~6, 3301--3323. \MR{3584259}

\bibitem{ckp:bal_vol}
\bysame, \emph{Geometry of biperiodic alternating links}, arXiv:1802.05343
  [math.GT], 2018.

\bibitem{cimasoni-survey}
David Cimasoni, \emph{The geometry of dimer models}, Winter Braids Lect. Notes
  \textbf{1} (2014), no.~Winter Braids IV (Dijon, 2014), Exp. No. 2, 14.
  \MR{3703249}

\bibitem{cdr-dimers}
Moshe Cohen, Oliver~T. Dasbach, and Heather~M. Russell, \emph{A twisted dimer
  model for knots}, Fund. Math. \textbf{225} (2014), no.~1, 57--74.
  \MR{3205565}

\bibitem{CCGLS}
D.~Cooper, M.~Culler, H.~Gillet, D.~Long, and P.~Shalen, \emph{Plane curves
  associated to character varieties of $3$-manifolds}, Invent. Math.
  \textbf{118} (1994), 47--84. \MR{95g:57029}

\bibitem{snappy}
Marc Culler, Nathan~M. Dunfield, Matthias Goerner, and Jeffrey~R. Weeks,
  \emph{Snap{P}y, a computer program for studying the geometry and topology of
  $3$-manifolds}, Available at \url{http://snappy.computop.org}.

\bibitem{Dobrowolski}
E.~Dobrowolski, \emph{On a question of {L}ehmer and the number of irreducible
  factors of a polynomial}, Acta Arith. \textbf{34} (1979), no.~4, 391--401.
  \MR{543210}

\bibitem{Dunfield_website}
Nathan Dunfield,
  \url{http://www.math.uiuc.edu/~nmd/preprints/misc/dylan/index.html}.

\bibitem{GM}
Antonin Guilloux and Julien March\'e, \emph{Volume function and {M}ahler
  measure of exact polynomials}, arXiv:1804.01395 [math.GT], 2018.

\bibitem{HowiePurcell}
Joshua~A. Howie and Jessica~S. Purcell, \emph{Geometry of alternating links on
  surfaces}, arXiv:1712.01373 [math.GT], 2017.

\bibitem{kenyonlap}
R.~Kenyon, \emph{The {L}aplacian and {D}irac operators on critical planar
  graphs}, Invent. Math. \textbf{150} (2002), no.~2, 409--439. \MR{MR1933589
  (2004c:31015)}

\bibitem{KOS}
R.~Kenyon, A.~Okounkov, and S.~Sheffield, \emph{Dimers and amoebae}, Ann. of
  Math. (2) \textbf{163} (2006), no.~3, 1019--1056. \MR{MR2215138
  (2007f:60014)}

\bibitem{kenyon-survey-1}
Richard Kenyon, \emph{Lectures on dimers}, Statistical mechanics, IAS/Park City
  Math. Ser., vol.~16, Amer. Math. Soc., Providence, RI, 2009, pp.~191--230.
  \MR{2523460 (2010j:82023)}

\bibitem{Lalin}
Matilde Lal\'in, \emph{Mahler measure and volumes in hyperbolic space}, Geom.
  Dedicata \textbf{107} (2004), 211--234. \MR{2110763}

\bibitem{Lawton}
W.~Lawton, \emph{A problem of {B}oyd concerning geometric means of
  polynomials}, J. Number Theory \textbf{16} (1983), no.~3, 356--362.
  \MR{84i:10056}

\bibitem{Lehmer}
D.~H. Lehmer, \emph{Factorization of certain cyclotomic functions}, Ann. of
  Math. (2) \textbf{34} (1933), no.~3, 461--479. \MR{1503118}

\bibitem{lickorish}
W.~B.~Raymond Lickorish, \emph{An introduction to knot theory}, Graduate Texts
  in Mathematics, vol. 175, Springer-Verlag, New York, 1997. \MR{1472978}

\bibitem{SWPD}
J.~Rafael Sendra, Franz Winkler, and Sonia P\'erez-D\'iaz, \emph{Rational
  algebraic curves}, Algorithms and Computation in Mathematics, vol.~22,
  Springer, Berlin, 2008, A computer algebra approach. \MR{2361646}

\bibitem{Smyth1}
C.~Smyth, \emph{On the product of the conjugates outside the unit circle of an
  algebraic integer}, Bull. London Math. Soc. \textbf{3} (1971), 169--175.
  \MR{44 \#6641}

\bibitem{Smyth}
\bysame, \emph{On measures of polynomials in several variables}, Bull. Austral.
  Math. Soc. \textbf{23} (1981), no.~1, 49--63.

\bibitem{Smyth-Kronecker}
C.~J. Smyth, \emph{A {K}ronecker-type theorem for complex polynomials in
  several variables}, Canad. Math. Bull. \textbf{24} (1981), no.~4, 447--452.
  \MR{644534}

\bibitem{stoimenow}
Alexander Stoimenow, \emph{Graphs, determinants of knots and hyperbolic
  volume}, Pacific J. Math. \textbf{232} (2007), no.~2, 423--451. \MR{2366362
  (2009a:57002)}

\bibitem{Vandervelde}
Sam Vandervelde, \emph{The {M}ahler measure of parametrizable polynomials}, J.
  Number Theory \textbf{128} (2008), no.~8, 2231--2250. \MR{2394818}

\bibitem{Zagier88}
D.~Zagier, \emph{The remarkable dilogarithm}, J. Math. Phys. Sci. \textbf{22}
  (1988), no.~1, 131--145. \MR{940391 (89f:33024)}

\end{thebibliography}
\providecommand{\bysame}{\leavevmode\hbox to3em{\hrulefill}\thinspace}
\providecommand{\MR}{\relax\ifhmode\unskip\space\fi MR }
\providecommand{\MRhref}[2]{%
  \href{http://www.ams.org/mathscinet-getitem?mr=#1}{#2}
}
\providecommand{\href}[2]{#2}


\end{document}